\newtheorem{theorem}{Theorem}[section]
\newtheorem{lemma}[theorem]{Lemma}
\newtheorem{remark}[theorem]{Remark}
\newcommand{\eps}{\varepsilon}
\renewcommand{\tilde}{\widetilde}
\newcommand{\Z}{\mathbb Z}
\newcommand{\R}{\mathbb R}
\newcommand{\p}{\partial}
\title{The KdV, the Burgers,  and the Whitham limit  for \\ a spatially periodic Boussinesq model}
\author{Roman Bauer, Wolf-Patrick D\"ull, Guido Schneider}
\date{\today{}} 
\begin{document}

\maketitle

\begin{center}
{\footnotesize IADM, Universit\"at Stuttgart, Pfaffenwaldring 57, 70569 Stuttgart Germany, \\ email: forename.lastname@mathematik.uni-stuttgart.de}
\end{center}

\tableofcontents
%

\begin{abstract}
We are interested in  the Korteweg-de Vries (KdV), the Burgers,  and  the Whit\-ham  limit 
for a spatially periodic Boussinesq model with non-small contrast.
We prove estimates between the KdV, the Burgers,  and  the Whitham approximation and true solutions of the original system
which guarantee that  these amplitude equations  make correct predictions about the dynamics of the spatially periodic Boussinesq model 
 over the natural time scales of the amplitude equations. 
 The proof is based on Bloch wave analysis and energy estimates. 
 The result is the first  justification result of  the KdV, the Burgers,  and the Whitham approximation 
for a dispersive PDE  posed in a spatially periodic medium of non-small contrast.
\end{abstract}

\section{Introduction}
In the long wave limit 
there exists a zoo of amplitude equations which can be derived via multiple scaling analysis 
for various dispersive wave systems with conserved quantities. Generically,
among these amplitude equations there are only three nonlinear ones which are 
independent of the small perturbation parameter, namely the Korteweg-de Vries (KdV) equation, 
the inviscid Burgers equation, 
 and the 
Whitham system. 
It is the purpose of this paper to discuss the validity of these approximations 
for a spatially periodic Boussinesq model with non-small contrast.

\subsection{The formal approximations in the spatially homogenous situation}

The KdV equation occurs as an amplitude equation in the description of small spatially  and  temporally modulations
of long waves in various dispersive wave systems. Examples are the water wave problem or equations 
from plasma physics, cf. \cite{CS98}. 
For the  Boussinesq equation 
\begin{equation}
\label{const}
\partial_t^2 u(x,t)
=
\partial_x^2 u(x,t)
-\partial_x^4 u(x,t)
+\partial_x^2(u(x,t)^2),
\end{equation}
with $ x \in \mathbb{R} $, $ t \in \mathbb{R} $, and $ u(x,t) \in \mathbb{R} $, by the ansatz
\begin{equation}
\label{ansatz}
u(x,t) = \varepsilon^2 A(X,T) , 
\end{equation}
where $ X = \varepsilon(x-t) $,  $T = \varepsilon^3 t $, $ A(X,T) \in \mathbb{R} $, and $ 0 < \varepsilon \ll 1 $ a small perturbation parameter, the KdV equation
\begin{equation}
\label{constkdv1}
\partial_T A = 
\frac{1}{2}\partial_X^3 A
-\frac{1}{2}\partial_X(A^2)
\end{equation}
can be derived by inserting \eqref{ansatz} into \eqref{const} and by equating the coefficients 
in front of $ \varepsilon^6 $ to zero. 
This ansatz can be generalized to 
\begin{equation}
u(x,t) = \varepsilon^{\alpha} A(X,T) , 
\end{equation}
where $ X = \varepsilon(x-t) $,  $T = \varepsilon^{1+\alpha} t $, and $ A(X,T) \in \mathbb{R} $, 
with $ \alpha > 0 $. For $ \alpha > 2 $ the Airy equation 
$ \partial_T A = \frac{1}{2}\partial_X^3 A $ occurs. The KdV equation is recovered for $ \alpha = 2 $,
and for $ \alpha \in (0,2) $   the inviscid Burgers equation 
\begin{equation}
\label{constburgers1}
\partial_T A = 
-\frac{1}{2}\partial_X(A^2)
\end{equation}
is obtained.
There is another long wave limit which leads to an $ \varepsilon $-independent  non-trivial amplitude equation.
With the ansatz 
\begin{equation}
\label{ansatzwhit}
u(x,t) =  U(X,T) , 
\end{equation}
where $ X = \varepsilon x $,  $T = \varepsilon t $, and $ U(X,T) \in \mathbb{R} $, we obtain
\begin{equation}
\label{constwhit1}
\partial_T^2 U = 
\partial_X^2 U
+\partial_X^2(U^2)
\end{equation}
which can be written as a system of conservation laws
\begin{equation}
\label{constwhit2}
\partial_T U = \partial_X V , \qquad  \partial_T V =
\partial_X U
+\partial_X(U^2).
\end{equation}
In the following both, \eqref{constwhit1} and \eqref{constwhit2}, are  called the Whitham system, cf. \cite{Wh74}.

\subsection{Justification by error estimates}

Estimates that the formal KdV approximation and true solutions of the original system stay close together over the natural KdV time scale
are a non-trivial task  since solutions of order $ \mathcal{O}(\varepsilon^2) $ have to be shown to be existent on an  $ \mathcal{O}(1/\varepsilon^3) $ 
time scale.
For \eqref{const} an approximation result is formulated as follows.
\begin{theorem} \label{th11}
Let $s\geq 0$ and let $A \in C([0,T_0],H^{5+s})$ be a solution of the KdV equation \eqref{constkdv1}. Then there exist $ \varepsilon_0 > 0 $ and $ C > 0 $ such that
for all $ \varepsilon \in (0,\varepsilon_0)$ we have solutions $ u $ of \eqref{const} with 
$$ 
\sup_{t \in [0,T_0/\varepsilon^3]} \| u(\cdot,t) - \varepsilon^2 A(\varepsilon(\cdot-t), \varepsilon^3 t ) \|_{H^{1+s}} \leq C \varepsilon^{7/2}. 
$$
\end{theorem}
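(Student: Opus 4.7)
The plan is to run the classical error-estimate scheme: identify a residual, rescale the error, and close a Gronwall estimate by a \emph{modified} energy chosen to suppress the dangerous $O(\varepsilon^2)$ linear interaction that arises in the error equation.

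I would first compute the residual of the bare ansatz $\psi(x,t):=\varepsilon^2 A(X,T)$, with $X=\varepsilon(x-t)$, $T=\varepsilon^3 t$:
\[
\mathrm{Res}(\psi) \;:=\; -\partial_t^2\psi+\partial_x^2\psi-\partial_x^4\psi+\partial_x^2(\psi^2) \;=\; -\varepsilon^8\,\partial_T^2A(X,T)+O(\varepsilon^{10}),
\]
the cancellation at order $\varepsilon^6$ being exactly the KdV equation~\eqref{constkdv1} for $A$. The conservative form of KdV ($\partial_T A=\partial_X(\ldots)$) ensures that $\partial_T^2 A$ is itself an $X$-derivative, so $\mathrm{Res}(\psi)=\partial_x G$ for some $G$ with $\|G\|_{H^{s+1}(dx)}=O(\varepsilon^{13/2})$, once we pay the familiar factor $\varepsilon^{-1/2}$ coming from $X=\varepsilon(x-t)$ in $L^2(dx)$. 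This is just enough for an $\varepsilon^{7/2}$-error over the KdV time interval $[0,T_0/\varepsilon^3]$, so no further correctors to $\psi$ are needed.

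Next, I would substitute $u=\psi+\varepsilon^{7/2}R$ into \eqref{const} and recast the result as the first-order system
\[
\partial_t R=\partial_x S,\qquad \partial_t S=\partial_x R-\partial_x^3 R+2\partial_x(\psi R)+\varepsilon^{7/2}\partial_x(R^2)+\varepsilon^{-7/2}G,
\]
and form the natural energy $E_0(t)=\tfrac12\int(S^2+R^2+R_x^2)\,dx$. Differentiating in time, the only potentially large piece of $\partial_tE_0$ is $2\int S\,\psi\,R_x\,dx=O(\varepsilon^2)E_0$, which by bare Gronwall would blow up as $e^{C/\varepsilon}$ on the KdV time scale. The key step, and the genuine obstacle, is the modified energy
\[
E(t) \;=\; E_0(t)+\int\psi\,R^2\,dx ,
\]
whose additional time derivative produces $-2\int S\psi R_x\,dx$ after one integration by parts, exactly cancelling the dangerous term and leaving only contributions proportional to $\psi_t$ and $\psi_x$, both $O(\varepsilon^3)$.

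Finally, I would promote this construction to higher regularity by applying $\partial_x^k$ for $0\le k\le 1+s$ and adding a modifier $\int\psi(\partial_x^kR)^2\,dx$ at every level; the commutators $[\partial_x^k,\psi\cdot]$ each cost an extra factor of $\varepsilon$ and so produce only harmless $O(\varepsilon^3)$ Gronwall contributions, controlled by Moser-type product estimates. The resulting $H^{1+s}$ modified energy $\mathcal E$ satisfies
\[
\partial_t\mathcal E \;\le\; C\varepsilon^3\mathcal E+C\varepsilon^{7/2}\mathcal E^{3/2}+C\varepsilon^4\mathcal E^{1/2},
\]
and a standard bootstrap on $[0,T_0/\varepsilon^3]$ yields $\mathcal E(t)\le \mathcal E(0)\,e^{CT_0}+O(1)$ uniformly in $\varepsilon$, which translates immediately into the claimed $\varepsilon^{7/2}$ bound. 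The hard part is the modified-energy trick suppressing the $(\psi R)_{xx}$ interaction; local existence, Moser estimates and Gronwall are routine.
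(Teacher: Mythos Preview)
Your proposal is correct and follows essentially the same route as the paper. Your first-order variables $(R,S)$ with $S=\partial_x^{-1}\partial_t R$ are just a relabelling of the paper's approach of testing the second-order error equation with $-\partial_t\partial_x^{-2}R$; in both cases the resulting energy is $\tfrac12\int\big((\partial_t\partial_x^{-1}R)^2+R^2+(\partial_xR)^2\big)\,dx$ together with the modifier $\int\psi R^2\,dx$ that kills the $O(\varepsilon^2)$ term. Two cosmetic differences: the paper also absorbs the cubic nonlinearity into the energy via an $R^3$ term rather than estimating it as $\varepsilon^{7/2}\mathcal E^{3/2}$, and in the paper the residual contribution to $\partial_t\mathcal E$ is $O(\varepsilon^3)\mathcal E^{1/2}$ rather than your $O(\varepsilon^4)\mathcal E^{1/2}$---a harmless discrepancy for the Gronwall argument.
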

There are two fundamentally different approaches 
to prove such an approximation result. 
For analytic initial conditions of the KdV equation  a Cauchy-Kowalevskaya 
based approach can be chosen, see \cite{KN86} with the comments given in \cite{Schn95ICIAM} for the water wave problem. 
Working in spaces of analytic functions gives some artificial smoothing which allows to gain the 
missing order w.r.t. $ \varepsilon $ between 
the inverse of the amplitude of $ \mathcal{O}(\varepsilon^2)$ and the time scale 
of $ \mathcal{O}(1/\varepsilon^3)$ via the derivative 
in front of the nonlinear terms in the KdV equation.
This approach is very robust 
and works without a detailed analysis of the underlying problem, cf. \cite{CDS14} for another example, but gives not optimal results. 

For initial conditions in Sobolev spaces the 
underying idea to gain such estimates is 
conceptually rather simple,
namely the construction of a suitable chosen
energy which include the terms 
of order  $ \mathcal{O}(\varepsilon^2)$ in the 
equation for the error, such that for the energy 
finally $ \mathcal{O}(\varepsilon^3 t)$ growth rates 
occur. However, the method is less robust 
since for every single original system a
different energy occurs and the major difficulty is 
the construction of this energy.
Estimates that the formal KdV approximation and true solutions of the different formulations 
of the water wave problem stay close together over the natural KdV time scale have been shown for instance in \cite{Cr85,SW00,SW02,BCL05,Du12} using this approach.
Another example is the justification of the KdV 
approximation for modulations of periodic waves 
in the NLS equation, cf. \cite{CR10}.
For \eqref{const} the energy approach is rather short and very instructive for the subsequent analysis.
Therefore, we recall it in Section \ref{sec2}.

Interestingly, it turns out that the proofs given for the KdV approximations transfer more or less line for 
line into proofs for the justification of the 
inviscid Burgers equation and of the Whitham system.
Since only the scaling has to be adapted, whenever 
a KdV approximation result holds also an 
inviscid Burgers  and Whitham 
approximation result can be established.
This will be explained in detail in Section \ref{sec2}.

As above such approximation results are a non-trivial task  since solutions of order $ \mathcal{O}(\varepsilon^{\alpha}) $ have to be shown to be existent on an  $ \mathcal{O}(\varepsilon^{1+\alpha}) $ 
time scale. For the inviscid Burgers equation the 
formulation of the approximation result 
goes along the lines of Theorem \ref{th11}. However, due to the notational complexity in achieving in general 
the estimates  for the residual
(the terms which do not cancel
after inserting the approximation into   \eqref{Boussinesq}),
 in Remark \ref{alphaville} we restrict ourselves to the case $ \alpha = 1 $.
\begin{theorem} \label{th12}
Let $s\geq 0$, $\alpha = 1 $ and let $A \in C([0,T_0],H^{3+s})$ be a solution of the inviscid Burgers  equation \eqref{constburgers1}. Then there exist $ \varepsilon_0 > 0 $ and $ C > 0 $ such that
for all $ \varepsilon \in (0,\varepsilon_0) $ we have solutions $ u $ of \eqref{const} with 
$$ 
\sup_{t \in [0,T_0/\varepsilon^2]} \| u(\cdot,t) - \varepsilon A(\varepsilon(\cdot-t), \varepsilon^2 t ) \|_{H^{1+s}} \leq C \varepsilon^{(3+2 \alpha)/2}. 
$$
\end{theorem}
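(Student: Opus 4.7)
The plan is to imitate the KdV justification recalled in Section \ref{sec2}, changing only the scaling: write $X=\eps(x-t)$, $T=\eps^2 t$, and employ the ansatz $u=\eps\psi$ with $\psi(x,t):=A(X,T)$. First I would compute the residual
\all{\mathrm{Res}:=\p_t^2(\eps\psi)-\p_x^2(\eps\psi)+\p_x^4(\eps\psi)-\p_x^2(\eps^2\psi^2).}
Because $\p_t^2\psi=\eps^2\p_X^2 A-2\eps^3\p_X\p_T A+\eps^4\p_T^2 A$ and $\p_x^j\psi=\eps^j\p_X^j A$, the contributions of order $\eps^3$ cancel, and the contributions of order $\eps^4$ collapse to $-2\eps^4\p_X(\p_T A+\tfrac12\p_X(A^2))$, which vanishes because $A$ solves \eqref{constburgers1}. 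What remains is pointwise of size $\O(\eps^5)$; after accounting for the $X=\eps x$ scaling this is $\O(\eps^{9/2})$ in $H^{1+s}$, provided $A$ has enough regularity (the Burgers equation can be used to trade $\p_T$-derivatives for $\p_X$-derivatives, so that $A\in H^{3+s}$ is sufficient). If a further loss of regularity becomes inconvenient, an $\O(\eps^3)$ corrector $\eps^3\psi_2$ can be added to the ansatz to shrink the residual arbitrarily.

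Next I would set $u=\eps\psi+\eps^{5/2}R$ and derive
\all{\p_t^2 R=\p_x^2 R-\p_x^4 R+2\eps\,\p_x^2(\psi R)+\eps^{5/2}\p_x^2(R^2)-\eps^{-5/2}\mathrm{Res}.}
The exponent $5/2$ is fixed by the balance $\eps^{-5/2}\cdot\O(\eps^{9/2})=\O(\eps^2)$, the accumulation rate that Gronwall tolerates on the time scale $[0,T_0/\eps^2]$. The cubic term $\eps^{5/2}\p_x^2(R^2)$ is harmless in a bootstrap as long as $\|R\|_{H^{1+s}}$ stays of order one. The central difficulty is the $\O(\eps)$ driving term $2\eps\p_x^2(\psi R)$, which if handled naively forces exponential growth $e^{C\eps t}$ and thereby destroys the estimate on time scale $1/\eps^2$.

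Following the scheme of Section \ref{sec2}, I would modify the natural Boussinesq energy
\all{E_0(R)=\frac12\int (\p_t R)^2+(\p_x R)^2+(\p_x^2 R)^2\,dx}
by an $\O(\eps)$ correction of the form $\eps\int\psi(\p_x R)^2\,dx$ (together with analogous correctors for the higher Sobolev derivatives needed to reach $H^{1+s}$), chosen precisely so that after integration by parts the dangerous term $2\eps\int\p_t R\,\p_x^2(\psi R)\,dx$ becomes a total time derivative up to commutators involving $\p_x\psi,\p_t\psi=\O(\eps)$. These commutators are genuinely of size $\O(\eps^2)E$, so the modified energy satisfies $\tfrac{d}{dt}E\le C\eps^2 E+C\eps^2\sqrt E$; Gronwall then keeps $E$ uniformly bounded on $[0,T_0/\eps^2]$, which unwinds to the claimed $H^{1+s}$ bound $C\eps^{5/2}$ for $u-\eps A$. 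The main obstacle is the construction of this modified energy, but as the paper indicates it is the same one that underlies Theorem \ref{th11}, adapted with only the $\eps^3\leftrightarrow\eps^2$ bookkeeping change; no new analytic ideas beyond the KdV argument are required.
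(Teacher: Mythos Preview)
Your overall strategy---an energy method with an $\O(\eps)$ correction that absorbs the dangerous $2\eps\,\p_x^2(\psi R)$ term as a total time derivative, leaving only $\O(\eps^2)$ commutators from $\p_t\psi,\p_x\psi$---is exactly the mechanism the paper uses. However, the paper's implementation differs from yours in one essential respect: the multiplier is not $\p_t R$ but $-\p_t\p_x^{-2}R$. This yields the energy
\[
E=\int (\p_t\p_x^{-1}R)^2+R^2+(\p_xR)^2+2\eps^{\alpha}\Psi\,R^2+\tfrac{2}{3}\eps^{(3+2\alpha)/2}R^3\,dx,
\]
in which \emph{both} the dangerous quadratic term and the cubic term $\eps^{5/2}\p_x^2(R^2)$ become exact time derivatives, and which directly controls $\|R\|_{L^2}$. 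Your energy $E_0=\frac12\int(\p_tR)^2+(\p_xR)^2+(\p_x^2R)^2\,dx$ with corrector $\eps\int\psi(\p_xR)^2$ sits one derivative higher; it bounds $\|\p_xR\|_{L^2}$ and $\|\p_x^2R\|_{L^2}$ but not $\|R\|_{L^2}$, so neither the claimed $H^{1+s}$ estimate nor the $\|R\|_{L^\infty}$ needed in the nonlinear bootstrap follows from it alone. To close the argument you would still need a second multiplier at the level of $\p_t\p_x^{-2}R$ (or equivalently, to adopt the paper's energy from the start).

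A second difference concerns the residual. Because the paper's multiplier introduces $\p_x^{-1}$, one needs $\|\p_x^{-1}\mathrm{Res}\|_{L^2}=\O(\eps^{(5+4\alpha)/2})$, and the simple ansatz $\eps A$ is one power of $\eps$ short; this is precisely why Remark~\ref{alphaville} and Appendix~\ref{appA} insist on the improved approximation $\eps A+\eps^2 B$. Your higher-order multiplier would in principle avoid that improvement, but then the term $\eps^5\p_X^4A$ in the residual forces $A\in H^4$ already for $s=0$, which is more than the hypothesis $A\in H^{3+s}$ provides---so the corrector is needed anyway to match the stated regularity.
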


Since for the Whitham approximation  solutions of order 
$ \mathcal{O}(1)$ are considered some smallness
condition is needed such that the used energy 
allows us to estimate the associated Sobolev norm. 

For \eqref{const} a possible Whitham approximation result is formulated as follows.
\begin{theorem} \label{th11whit}
Let $s\geq 0$. There exists a $ C_1 > 0 $ such that the 
following holds.  Let $U \in C([0,T_0],H^{3+s})$ be a solution of   \eqref{constwhit1}
with 
$$ \sup_{T \in [0,T_0]} \| U(\cdot,T) \|_{H^{3+s}} \leq C_1 .$$
 Then there exist $ \varepsilon_0 > 0 $ and $ C > 0 $ such that
for all $ \varepsilon \in (0,\varepsilon_0)$ we have solutions $ u $ of \eqref{const} with 
$$ 
\sup_{t \in [0,T_0/\varepsilon]} \| u(\cdot,t) - U(\varepsilon \cdot, \varepsilon t ) \|_{H^{1+s}} \leq C \varepsilon^{3/2}. 
$$
\end{theorem}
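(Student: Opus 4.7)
The plan is to adapt the energy method outlined for the KdV case in Section \ref{sec2}: only the scaling has to be adjusted and one additional ingredient, a smallness argument used to control the modified linear operator, must be inserted. I would first write $u(x,t) = \psi(x,t) + \varepsilon^{3/2}R(x,t)$ with $\psi(x,t) := U(\varepsilon x, \varepsilon t)$. Inserting $\psi$ into \eqref{const} and using that $U$ solves \eqref{constwhit1}, the residual equals
\[
\mathrm{Res}(\psi) := -\partial_t^2\psi + \partial_x^2\psi - \partial_x^4\psi + \partial_x^2(\psi^2) = -\varepsilon^4\,(\partial_X^4 U)(\varepsilon x,\varepsilon t),
\]
which is of order $\mathcal{O}(\varepsilon^{7/2})$ in $L^2_x$ after the change of variables. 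Subtracting the equation for $\psi$ from \eqref{const} and dividing by $\varepsilon^{3/2}$ produces the error equation
\[
\partial_t^2 R - \partial_x^2 R + \partial_x^4 R - 2\,\partial_x^2(\psi R) = \varepsilon^{3/2}\,\partial_x^2(R^2) - \varepsilon^{5/2}\,(\partial_X^4 U)(\varepsilon x,\varepsilon t),
\]
with $R(\cdot,0) = \partial_t R(\cdot,0) = 0$.

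The structural novelty compared with the KdV and Burgers cases is that the coefficient $2\psi$ in front of $\partial_x^2 R$ in the linearization is $\mathcal{O}(1)$, whereas in those two cases $\psi$ carried a factor $\varepsilon^\alpha$ and could be absorbed into the perturbation. To accommodate this I would design an energy adapted to the modified linear operator; at the base level
\[
\mathcal{E}(R) := \tfrac{1}{2}\int \Bigl[(\partial_t R)^2 + (1+2\psi)(\partial_x R)^2 + (\partial_x^2 R)^2\Bigr]\,dx,
\]
and then pass to a higher-regularity version $\mathcal{E}_s$ by summing the analogous expressions for $\partial_x^k R$, $k=0,\dots,s$. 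The role of the smallness hypothesis is precisely that $\|\psi\|_{L^\infty}\le C\|U\|_{H^1}\le CC_1$, so that choosing $C_1$ sufficiently small forces $1+2\psi\ge\tfrac12$ pointwise; then $\mathcal{E}_s$ is equivalent to $\|R\|_{H^{s+2}}^2+\|\partial_t R\|_{H^s}^2$ and in particular dominates $\|R\|_{H^{1+s}}^2$.

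Differentiating $\mathcal{E}_s$ along the error equation, the quadratic energy-conservative contributions cancel after integration by parts, exactly as they do for the full Boussinesq Hamiltonian. Every surviving term then carries either a factor $\varepsilon$ from a time or space derivative hitting $\psi$, a factor $\varepsilon^{3/2}$ from the nonlinearity $\partial_x^2(R^2)$, or the small residual forcing. A careful accounting produces an inequality of the form
\[
\frac{d\mathcal{E}_s}{dt} \;\le\; C\varepsilon\,\mathcal{E}_s + C\varepsilon^{3/2}\,\mathcal{E}_s^{3/2} + C\varepsilon^{2}\,\mathcal{E}_s^{1/2},
\]
and a standard bootstrap combined with Gronwall on the Whitham interval $[0,T_0/\varepsilon]$ then keeps $\mathcal{E}_s$ uniformly bounded in $\varepsilon$. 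This gives $\|R(\cdot,t)\|_{H^{1+s}}\le C$ and hence the claimed bound $\|u-\psi\|_{H^{1+s}}\le C\varepsilon^{3/2}$. The main obstacle is the construction of $\mathcal{E}_s$ itself: the $\mathcal{O}(1)$ contributions generated by the variable-coefficient term $2\partial_x^2(\psi R)$ have to be organized, through carefully chosen integrations by parts, into an exact time derivative of a positive quadratic form, leaving only terms that either vanish with $\varepsilon$ (because they involve a slow derivative of $\psi$) or are nonlinear in $R$. The smallness of $C_1$ is indispensable precisely at this step, since without it $1+2\psi$ could change sign and $\mathcal{E}_s$ would no longer control any Sobolev norm of $R$; this is the genuine obstruction separating the Whitham case from the KdV and Burgers ones.
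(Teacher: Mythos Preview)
Your overall plan—derive the error equation, build an energy in which the $\mathcal{O}(1)$ variable-coefficient term $2\partial_x^2(\psi R)$ is absorbed into a positive quadratic form (and this is indeed where the smallness of $C_1$ enters), then close by Gronwall on $[0,T_0/\varepsilon]$—is exactly the paper's strategy. But the particular energy you write down has a genuine gap: the functional
\[
\mathcal{E}(R)=\tfrac12\int\bigl[(\partial_tR)^2+(1+2\psi)(\partial_xR)^2+(\partial_x^2R)^2\bigr]\,dx
\]
does \emph{not} control $\|R\|_{L^2}$, so your claim that $\mathcal{E}_s\sim\|R\|_{H^{s+2}}^2+\|\partial_tR\|_{H^s}^2$ is false. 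This breaks the argument in two places. First, the conclusion requires $\|R\|_{H^{1+s}}\le C$, which includes $\|R\|_{L^2}$; recovering it by integrating $\partial_tR$ in time only yields $\|R(t)\|_{L^2}\lesssim T_0/\varepsilon$, destroying the rate. Second, to estimate the cubic contribution $\varepsilon^{3/2}\!\int\partial_tR\,\partial_x^2(R^2)\,dx$ you need $\|R\|_{L^\infty}$, and on the line this cannot be extracted from $\dot H^1\cap\dot H^2$ information alone.

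The paper repairs this by testing not with $\partial_tR$ but with $-\partial_t\partial_x^{-2}R$ (defined in Fourier by $\widehat{\partial_x^{-1}R}(k)=(ik)^{-1}\widehat R(k)$). The identity $-\!\int(\partial_t\partial_x^{-2}R)\,\partial_x^2R\,dx=-\tfrac12\partial_t\!\int R^2\,dx$ then supplies the missing $\|R\|_{L^2}^2$ in the energy; the same multiplier turns the cubic term into the exact derivative $-\tfrac13\partial_t\!\int R^3\,dx$ and the variable-coefficient term into $-\tfrac12\partial_t\!\int\psi R^2\,dx$ plus an $\mathcal{O}(\varepsilon)$ remainder coming from $\partial_t\psi$. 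The resulting energy
\[
E=\int\Bigl[(\partial_t\partial_x^{-1}R)^2+R^2+(\partial_xR)^2+2\psi R^2+\tfrac{2}{3}\varepsilon^{3/2}R^3\Bigr]\,dx
\]
is equivalent to $\|R\|_{H^1}^2+\|\partial_t\partial_x^{-1}R\|_{L^2}^2$ precisely when $\|\psi\|_{L^\infty}$ is small, confirming your reading of the role of $C_1$, and Gronwall then closes exactly as you describe. A side benefit of the $\partial_x^{-1}$ multiplier is that the forcing enters through $\|\partial_x^{-1}\mathrm{Res}(\psi)\|_{L^2}=\mathcal{O}(\varepsilon^{5/2})$, which only needs $U\in H^{3+s}$; with your multiplier one would need $\partial_X^4U\in L^2$, i.e.\ $U\in H^{4+s}$, one derivative more than the hypothesis provides.
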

The Whitham system for the water wave problem 
coincides with the shallow water wave equations 
which have been justified for the water wave problem 
without surface tension
in \cite{Ov76,Ig07}.
A Whitham approximation result 
that the periodic wave trains of the NLS equation 
are approximated by the Whitham system can be 
found in \cite{DS09}.

\subsection{The spatially periodic situation}

The last years have seen some first attempts to justify the KdV equation in periodic media. 
It has been justified in \cite{Ig07} for the water wave problem over a periodic bottom in the KdV scaling, i.e., with long wave  oscillations of 
the bottom of magnitude $ \mathcal{O}(\varepsilon^2) $ varying on a spatial scale of order
$ \mathcal{O}(\varepsilon^{-1}) $.
 The same result can be found in \cite{Ch09}
where general bottom topographies of small amplitude have  been handled.
The result is based on  \cite{Ch07}
where other  amplitude systems have been justified.
This situation can be handled as perturbation 
of the spatially  homogeneous  case.

In case of oscillations of the bottom of magnitude  $ \mathcal{O}(1) $ varying on a spatial scale of  order $ \mathcal{O}(1) $,  no approximation result 
can be found in the existing literature.
As a first attempt to solve this question for the water wave problem we consider 
a spatially periodic Boussinesq equation
\begin{align}
\label{Boussinesq}
 \partial_t^2 u(x,t) 
= &
\partial_x(a(x)\partial_x u(x,t)) \\ &
-\partial_x^2(b(x)\partial_x^2u(x,t) )
+\partial_x(c(x)\partial_x(u(x,t)^2)), \nonumber
\end{align}
with $ x \in \mathbb{R} $, $ t \geq 0 $, $ u(x,t) \in \mathbb{R} $, and smooth $ x $-dependent
$ 2 \pi $-spatially periodic coefficients $ a $, $ b $, and $ c $ satisfying 
$$
\inf_{x \in \mathbb{R}} a(x) > 0 \quad \textrm{and} \quad 
 \inf_{x \in \mathbb{R}} b(x)>0. 
 $$  
For this equation we derive the KdV equation 
by making a Bloch mode expansion of 
\eqref{Boussinesq}. The KdV approximation describes 
the modes which in Figure \ref{fig:example} are contained 
in the  circles.
We prove an approximation result which is formulated 
in Theorem \ref{main_theorem}.
It guarantees that  the KdV equation  makes correct predictions about the dynamics of the spatially periodic Boussinesq model \eqref{Boussinesq}
 over the natural KdV time scale. 
The presented result is the first  justification result of  the KdV approximation 
for a dispersive nonlinear PDE  posed in a spatially periodic medium of non-small contrast.
For linear systems this limit has been considered independently in \cite{DLS14,DLS15}.

In order  to make the residual small an improved approximation 
has to be constructed. 
Since this construction is not the main purpose of this paper
we additionally assume 

{\bf (SYM)}
 the coefficient functions 
$$
a= a(x), \quad b= b(x) , \quad \textrm{and} \quad c= c(x) \quad \textrm{are even w.r.t. }  x.
$$
As in the spatially homogeneous situation 
it turns out that the proof given for the KdV approximation transfers more or less line for 
line into proofs for the justification of the approximation   via the 
inviscid Burgers equation and of the Whitham system.
The associated approximation results are  formulated in Theorem \ref{main_theoremburgers}
and Theorem \ref{main_theoremwhitham}.

The paper was originally  intended as the next step in generalizing a method 
which has been developed in \cite{CS11} for the justification of the KdV approximation 
in situations when the KdV modes are resonant to other long wave modes.
The method had already successfully been 
applied in justifying the KdV approximation for the poly-atomic FPU problem in \cite{CCPS12}. 
The qualitative difference in justifying the KdV equation for the spatially periodic Boussinesq equation in contrast to \cite{CS11,CCPS12} is that 
for fixed Bloch respectively  Fourier wave number 
the presented problem is infinite dimensional. \cite{CS11,CCPS12}  corresponds to the middle panel of Figure \ref{fig:example} where the spatially periodic Boussinesq equation
corresponds to the right panel of Figure \ref{fig:example}.
As a consequence the normal form transform which is a major part 
of the proofs of \cite{CS11,CCPS12}  would be more demanding from an analytic point of view. In the justification of the Whitham system  with the approach of \cite{CS11,CCPS12}
infinitely many normal form transform\textcolor{red}{s} have to be performed \cite{DSS16}.

   \begin{figure}[htbp] 
   \centering
   \includegraphics[width=1.5in]{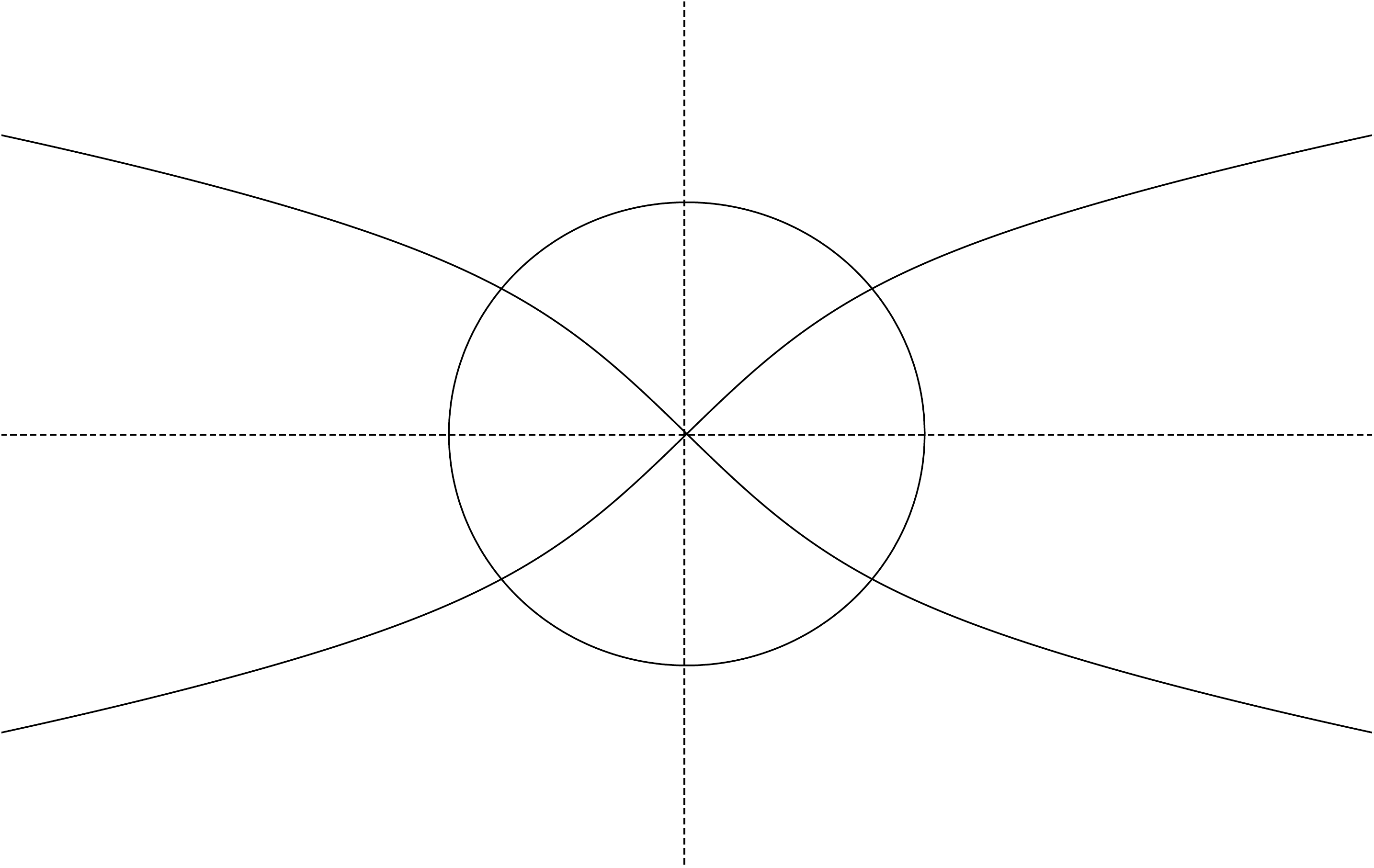} \qquad
   \includegraphics[width=1.5in]{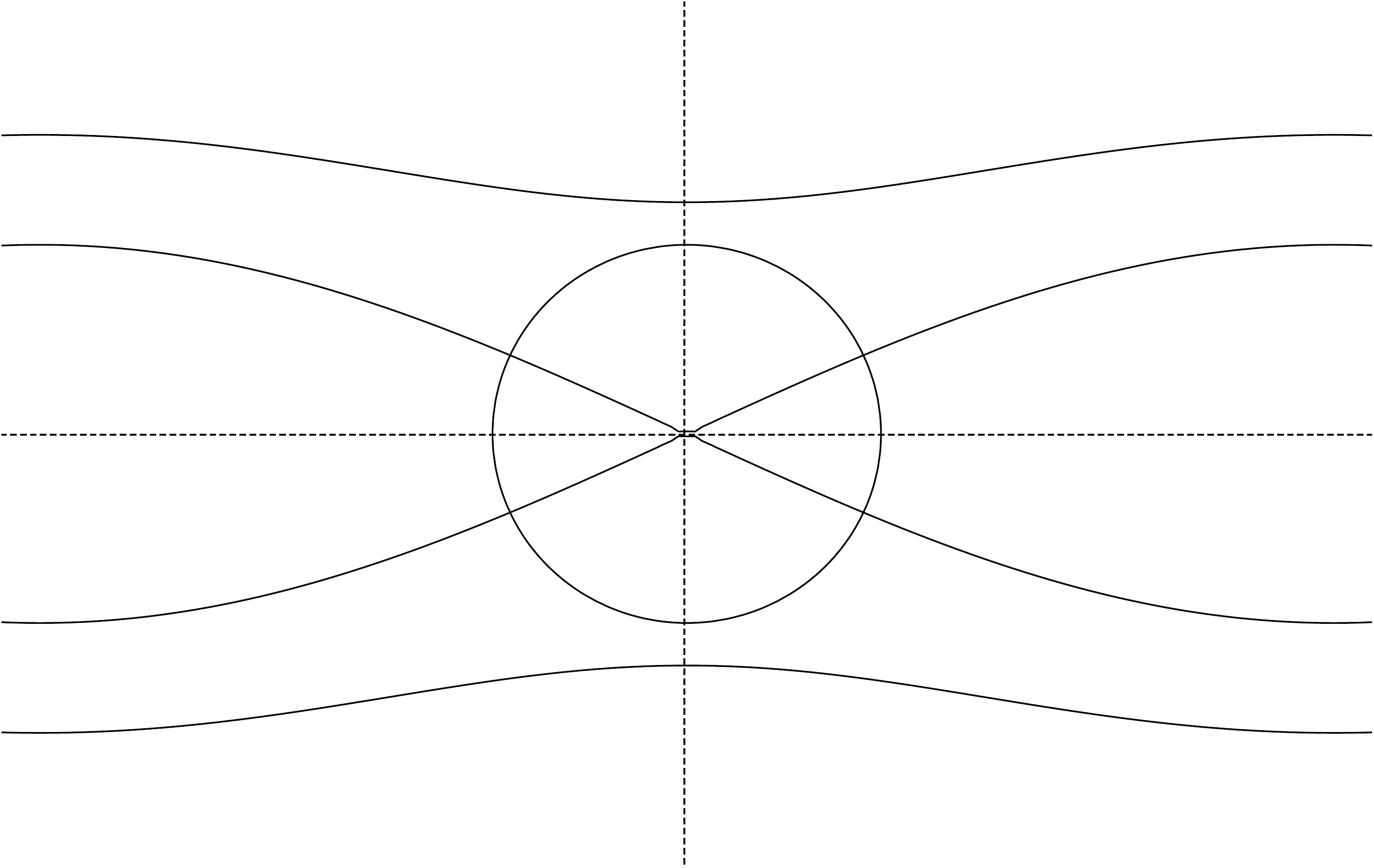} 
   \qquad
   \includegraphics[width=1.5in]{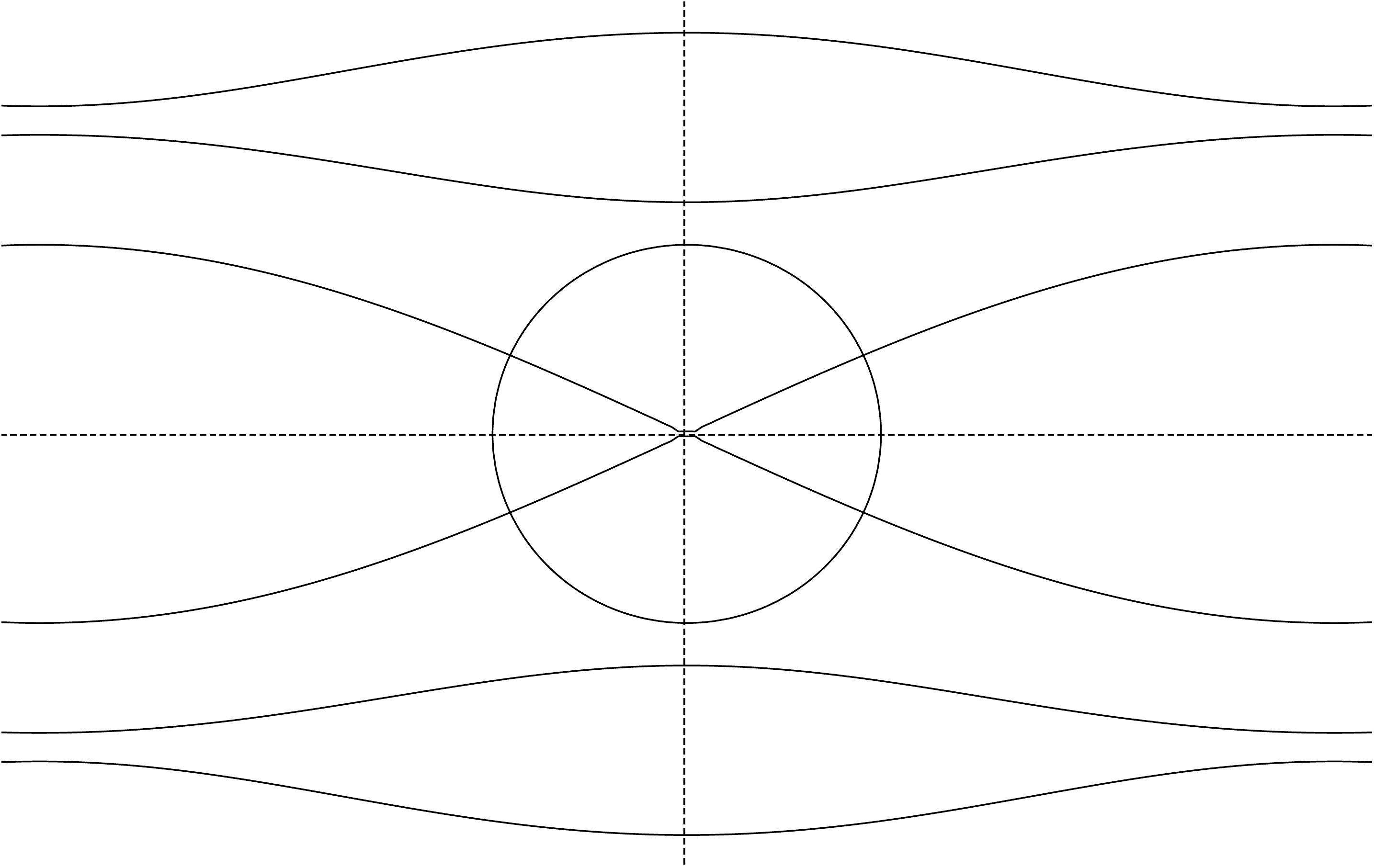} 
   \caption{The left panel shows the curves of eigenvalues over the Fourier wave numbers 
  as it appears for the water wave problem \cite{Cr85,SW00,SW02,BCL05,Ig07,Du12}.
  The middle panel shows the  finitely many curves of eigenvalues as they  appear for instance for the poly-atomic 
   FPU system \cite{CS11,CCPS12}. The right panel shows the  infinitely many curves of eigenvalues 
  over the Bloch wave numbers 
   as it appears for the  spatially periodic Boussinesq model \eqref{Boussinesq},
   the water wave problem over a periodic bottom topography, or for the linearization 
   around a periodic wave in dispersive systems.
   Since the Fourier transform of $ \varepsilon^2 A(\varepsilon x) $ is given by 
   $ \varepsilon^2 \varepsilon^{-1} \widehat{A}( x/\varepsilon) $
   the KdV equations describe the modes at the wave numbers $ k = 0 $ 
   with the vanishing eigenvalues which are contained in the circles. One of the two curves in the circle describes wave packets moving to the left, the other curve
   wave packets moving to the right.}
   \label{fig:example}
\end{figure}

Interestingly, for the spatially periodic Boussinesq equation \eqref{Boussinesq} there exists 
 an energy in physical space
which allowed us
to incorporate the normal form transforms into the energy estimates.
This energy approach is presented in the following.
\medskip

{\bf Notation.} Constants which can be chosen independently of the small perturbation parameter $ 0 < \varepsilon \ll 1 $
are denoted with the same symbol $ C $. 
We write $ \int $ for $ \int_{-\infty}^{\infty} $. The Fourier transform of a function $ u $ is denoted with $ \widehat{u}  $.
The Bloch  transform of a function $ u $ is denoted with $ \widetilde{u}  $ and this tool is recalled in Appendix \ref{leutkirch}.
We introduce the norm $ \| \cdot \|_{L^2_s} $ by
$$
\| \widehat{u} \|_{L^2_s}^2 = \int | \widehat{u}(k) |^2 (1+k^2)^s dk
$$
and define the Sobolev norm $ \| u \|_{H^s} = \| \widehat{u} \|_{L^2_s} $, but use also equivalent versions.
\medskip

{\bf Acknowledgement.} 
The authors are grateful to Florent Chazel for helping us to understand the existing literature.
Moreover, we would like to thank Martina Chirilus-Bruckner for a number of helpful discussions.
The paper is partially supported by the Deutsche Forschungsgemeinschaft (DFG)
under the grant Schn520/9-1.

\section{The spatially homogeneous case}

\label{sec2}

It is the goal of this section to give a simple proof for  Theorem \ref{th11},   Theorem \ref{th12}, and Theorem \ref{th11whit} using the energy method.
The proof will be the basis of the subsequent analysis.
All three cases can be handled with the same approach.

The residual 
$$ 
\mathrm{Res}(u) = -\partial_t^2 u(x,t)
+
\partial_x^2 u(x,t)
-\partial_x^4 u(x,t)
+\partial_x^2(u(x,t)^2)
$$ 
quantifies how much a function $ u $ fails to satisfy the  Boussinesq model \eqref{const}.
For the KdV approximation \eqref{ansatz} abbreviated with $ \varepsilon^2 \Psi $ 
we find  
\begin{eqnarray*}
\mathrm{Res}(\varepsilon^2 \Psi) &  = & - \varepsilon^4 c^2 \partial_X^2 A - 2 \varepsilon^6 \partial_T \partial_X A 
- \varepsilon^8 \partial_T^2 A  \\ && + \varepsilon^4 \partial_X^2 A - \varepsilon^6 \partial_X^4 A 
+ \varepsilon^6 \partial_X^2 (A^2) \\ & = & - \varepsilon^8 \partial_T^2 A
\end{eqnarray*}
if we choose $ A $ to satisfy the  KdV equation  \eqref{constkdv1}.
Therefore, we have
\begin{lemma}
Let $s\geq 0$ and let  $A  \in C([0,T_0],H^{5+s})$ 
be  a solution  of the  KdV equation \eqref{constkdv1}. 
 Then there exist $ \varepsilon_0 > 0 $, $ C_{\rm res} $ such that for all $  \varepsilon \in (0,\varepsilon_0) $ we have 
$$ 
\sup_{t \in [0,T_0/\varepsilon^3]} \|  \partial_x^{s-1}  \mathrm{Res}(   \varepsilon^2 \Psi(\cdot,t,\varepsilon)    )    \|_{L^2} 
\leq C_{\rm res}  \varepsilon^{(13+2s)/2}.
$$
\end{lemma}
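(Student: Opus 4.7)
The plan is to leverage the algebraic cancellation already displayed in the excerpt and then convert the resulting bound into the stated $\varepsilon$-power by two standard changes of variable, together with one use of the KdV equation to trade time derivatives of $A$ for spatial ones.

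First I would record that when $A$ solves \eqref{constkdv1} the computation in the excerpt shows
\all{
\mathrm{Res}(\varepsilon^2 \Psi)(x,t) = -\varepsilon^8 (\partial_T^2 A)(\varepsilon(x-t),\varepsilon^3 t),
}
so everything reduces to estimating $\|\partial_x^{s-1}(\partial_T^2 A)(\varepsilon(x-t),\varepsilon^3 t)\|_{L^2}$. Differentiating under the rescaling $X=\varepsilon(x-t)$ produces
\all{
\partial_x^{s-1}\bigl[(\partial_T^2 A)(\varepsilon(x-t),\varepsilon^3 t)\bigr] = \varepsilon^{s-1}(\partial_X^{s-1}\partial_T^2 A)(\varepsilon(x-t),\varepsilon^3 t),
}
and the change of variables $X=\varepsilon(x-t)$ in the $L^2(dx)$ integral supplies an additional factor $\varepsilon^{-1/2}$, so the estimate reduces to controlling $\|\partial_X^{s-1}\partial_T^2 A(\cdot,T)\|_{L^2(dX)}$ uniformly for $T\in[0,T_0]$. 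Adding up the exponents of $\varepsilon$ gives exactly $8+(s-1)-\tfrac12 = (13+2s)/2$, as required.

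Second, to control $\partial_T^2 A$ in the Sobolev scale I would differentiate the KdV equation once more in $T$ and substitute $\partial_T A = \tfrac12\partial_X^3 A - \tfrac12\partial_X(A^2)$ wherever a $\partial_T A$ appears. This expresses $\partial_T^2 A$ as a polynomial expression in $A$ and its $X$-derivatives, with the highest-order linear contribution being $\tfrac14 \partial_X^6 A$ coming from $\partial_X^3 \partial_T A$. Hence $\|\partial_X^{s-1}\partial_T^2 A\|_{L^2}$ is dominated by $\|A\|_{H^{5+s}}$ and quadratic products of the form $\|\partial_X^j A\,\partial_X^k A\|_{L^2}$ with $j+k \le 5+s$, which are handled by the standard tame Moser/algebra estimate on $H^m(\R)$ for $m>1/2$, yielding an overall bound of the form $C(\|A\|_{H^{5+s}} + \|A\|_{H^{5+s}}^2 + \|A\|_{H^{5+s}}^3)$.

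Finally, since $A\in C([0,T_0],H^{5+s})$, the quantity $\sup_{T\in[0,T_0]}\|A(\cdot,T)\|_{H^{5+s}}$ is a finite constant depending only on $A$ and $T_0$. Combining this with the $\varepsilon$-counting of the first step produces the claimed bound with $C_{\rm res}$ depending only on $T_0$ and $\sup_T\|A(\cdot,T)\|_{H^{5+s}}$, uniformly in $t\in[0,T_0/\varepsilon^3]$.

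The main obstacle, such as it is, is the bookkeeping: one has to be confident that after using the KdV equation to convert $\partial_T^2$ into spatial operations, the derivative count is exactly right (six spatial derivatives of $A$ arise from the $\partial_X^3\partial_T A$ term, which matches the hypothesis $A\in H^{5+s}$ once the outer $\partial_X^{s-1}$ is taken into account). Everything else is either direct scaling arithmetic or standard nonlinear estimates in $H^m$, so no subtle functional-analytic step is needed.
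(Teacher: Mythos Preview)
Your proof is correct and follows essentially the same approach as the paper: both compute $\mathrm{Res}(\varepsilon^2\Psi)=-\varepsilon^8\partial_T^2 A$, use the KdV equation twice to rewrite $\partial_T^2 A$ purely in spatial derivatives of $A$ (with top order $\partial_X^6 A$), and then count the $\varepsilon$-powers coming from $\partial_x^{s-1}=\varepsilon^{s-1}\partial_X^{s-1}$ and the $L^2$-scaling. The only point the paper makes more explicit is that this rewriting exhibits $\partial_T^2 A$ as a total $\partial_X$-derivative, which is what legitimizes the negative-order operator $\partial_X^{s-1}$ in the borderline case $s=0$; you use this implicitly but might state it.
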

\begin{proof}
Using the KdV equation allows us to write 
\begin{eqnarray*}
4 \partial_T^2 A  & = & - 2 \partial_T ( \partial_X^3 A + \partial_X (A^2)) 
= - 2 ( \partial_X^3 \partial_T A + 2 \partial_X (A \partial_T A))\\
&=&\partial_X^3 (\partial_X^3 A + \partial_X (A^2)) +2 \partial_X (A (\partial_X^3 A + \partial_X (A^2))).
\end{eqnarray*}
This shows that 
$A(\cdot,T) \in H^{6}$ is necessary to estimate the residual in $ L^2 $. The formal error  of order $ \mathcal{O}(\varepsilon^8) $ is reduced by a factor $  \varepsilon^{-1/2} $ due to the scaling properties 
of the $ L^2 $-norm. Moreover, 
due to the representation of $ \partial_T^2 A $ as a spatial derivative, below, 
we can  apply $ \partial_x^{-1}  = \varepsilon^{-1}\partial_X^{-1} $ to the residual terms which however loses another factor $ \varepsilon^{-1} $.
\end{proof}

Similarly,  for the Whitham approximation  \eqref{ansatzwhit} abbreviated with $ \varepsilon^2 \Psi $  we find 
$
\textrm{Res}( \Psi) =  - \varepsilon^4 \partial_X^4 U
$
if we choose  $ U $ to satisfy the Whitham system \eqref{constwhit1}.
Hence, for an estimate in $ L^2 $
we need $U\in H^{4}$.  Exactly as above we have 
\begin{lemma}
Let $s\geq 0$ and let  $A  \in C([0,T_0],H^{3+s})$     
be  a solution  of the  Whitham system \eqref{constwhit1}. 
 Then there exist $ \varepsilon_0 > 0 $, $ C_{\rm res} $ such that for all $  \varepsilon \in (0,\varepsilon_0) $ we have 
$$ 
\sup_{t \in [0,T_0/\varepsilon]} \|  \partial_x^{s-1}  \mathrm{Res}(   \Psi(\cdot,t,\varepsilon)    )    \|_{L^2} 
\leq C_{\rm res}  \varepsilon^{(5+2s)/2}.
$$
\end{lemma}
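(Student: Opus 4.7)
The plan mirrors the argument given just above for the KdV residual: substitute the ansatz into \eqref{const}, use the Whitham equation to annihilate the leading-order terms, and then track how the spatial scaling, the number of derivatives, and the $L^2$ change of variables combine.

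First I would substitute $u(x,t) = U(\varepsilon x, \varepsilon t)$ into the Boussinesq equation \eqref{const}. With $X = \varepsilon x$, $T = \varepsilon t$, the chain rule gives $\partial_t^2 u = \varepsilon^2 \partial_T^2 U$, $\partial_x^2 u = \varepsilon^2 \partial_X^2 U$, $\partial_x^2(u^2) = \varepsilon^2 \partial_X^2(U^2)$, and $\partial_x^4 u = \varepsilon^4 \partial_X^4 U$. The three $\mathcal{O}(\varepsilon^2)$ terms assemble into $\varepsilon^2[-\partial_T^2 U + \partial_X^2 U + \partial_X^2(U^2)]$ and cancel because $U$ solves \eqref{constwhit1}, leaving only $\mathrm{Res}(\Psi) = -\varepsilon^4 \partial_X^4 U(\varepsilon x, \varepsilon t)$ as already quoted in the excerpt.

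Next I would exploit that this residual is a pure $x$-derivative of a slowly varying function. Writing $V(x,t) := U(\varepsilon x, \varepsilon t)$, one has $\partial_X^4 U(\varepsilon x, \cdot) = \varepsilon^{-4}\,\partial_x^4 V$, hence $\mathrm{Res}(\Psi) = -\partial_x^4 V$. Therefore $\partial_x^{s-1}\mathrm{Res}(\Psi) = -\partial_x^{s+3} V$ is well defined in $L^2$ for every $s \geq 0$, since the four powers of $\partial_x$ absorb the one inverse derivative even when $s = 0$.

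Then the $L^2$ estimate is a pure scaling computation. Since $\partial_x^{s+3} V = \varepsilon^{s+3}(\partial_X^{s+3} U)(\varepsilon x, \varepsilon t)$, a change of variables $X = \varepsilon x$ in the $L^2$-integral yields an extra factor $\varepsilon^{-1/2}$, so
\[
\| \partial_x^{s-1} \mathrm{Res}(\Psi) \|_{L^2} \;=\; \varepsilon^{s+5/2}\, \| \partial_X^{s+3} U(\cdot, \varepsilon t) \|_{L^2} \;\leq\; \varepsilon^{(5+2s)/2}\, \sup_{T \in [0,T_0]} \| U(\cdot, T) \|_{H^{3+s}}.
\]
Taking the supremum over $t \in [0,T_0/\varepsilon]$ (equivalently $T \in [0,T_0]$) and invoking the hypothesis $U \in C([0,T_0], H^{3+s})$ gives the claim with $C_{\mathrm{res}}$ the product of that bound and an absolute constant. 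There is no real obstacle here; in contrast to the KdV case one does \emph{not} need to rewrite $\partial_T^2 U$ as a spatial derivative, because the Whitham system is already second order in time, so the $\mathcal{O}(\varepsilon^2)$ cancellation is immediate. The only delicate point worth flagging is the use of the antiderivative $\partial_x^{s-1}$ at $s < 1$, which is legitimate precisely because the surviving residual is itself a fourth $x$-derivative.
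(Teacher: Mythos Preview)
Your proof is correct and follows exactly the approach sketched in the paper: the paper simply records that $\mathrm{Res}(\Psi)=-\varepsilon^4\partial_X^4 U$ once the Whitham equation cancels the $\varepsilon^2$ terms, and then appeals to the same scaling argument as in the KdV lemma. You have merely made the scaling computation and the role of the four $\partial_x$-derivatives in absorbing $\partial_x^{-1}$ explicit, which the paper leaves to the reader.
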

\begin{remark} \label{alphaville} {\rm
For the inviscid Burgers equation the residual becomes too large with the simple ansatz \eqref{ansatz}. However,
 by adding 
higher order terms to the approximation \eqref{ansatz},  with a slight abuse of notation this approximation is again called $ \varepsilon^{\alpha} \Psi $,  one can always achieve 
$$ 
\sup_{t \in [0,T_0/\varepsilon^{1+\alpha}]} \|    \mathrm{Res}(  \varepsilon^{\alpha}   \Psi(\cdot,t,\varepsilon)    )    \|_{L^2} 
\leq C_{\rm res}  \varepsilon^{(7+4\alpha)/2}
$$
and 
$$ 
\sup_{t \in [0,T_0/\varepsilon^{1+\alpha}]} \|  \partial_x^{-1}  \mathrm{Res}( \varepsilon^{\alpha}    \Psi(\cdot,t,\varepsilon)    )    \|_{L^2} 
\leq C_{\rm res}  \varepsilon^{(5+4\alpha)/2}.
$$
See Appendix \ref{appA} where we prove these estimates for $ \alpha = 1 $ and explain that 
the number of additional terms goes to infinity for $ \alpha \to 0 $ and $ \alpha \to 2 $.
}\end{remark}
From this point on the remaining estimates can be handled exactly the same. 
The case $ \alpha = 0 $ corresponds to the Whitham approximation and the case $ \alpha = 2 $ to the KdV approximation. 
The difference $ \varepsilon^{(3+2 \alpha)/2} R = u - \varepsilon^{\alpha} \Psi $
 satisfies 
\begin{equation} \label{smitseq}
\partial_t^2 R
=
\partial_x^2 R
-\partial_x^4 R
+ 2 \varepsilon^{\alpha} \partial_x^2(\Psi R) + \varepsilon^{(3+2 \alpha)/2}\partial_x^2(R^2) + \varepsilon^{-(3+2 \alpha)/2} \mathrm{Res}(   \varepsilon^2 \Psi).
\end{equation}
We multiply the error equation \eqref{smitseq} with $ -\partial_t \partial_x^{-2} R $ which is defined via its Fourier transform w.r.t. $ x$, namely via $ \widehat{\partial_x^{-1} R}(k) = \frac{1}{ik} \widehat{R}(k) $, 
integrate it w.r.t. $ x $, and find 
\begin{eqnarray*}
-\int (\partial_t  \partial_x^{-2} R) \partial_t^2 R dx & = &  \partial_t  \int 
(\partial_t  \partial_x^{-1} R)^2 dx/2, \\ 
-\int (\partial_t  \partial_x^{-2} R) \partial_x^2 R dx & = & - \partial_t  \int 
R^2 dx/2, \\ 
\int (\partial_t  \partial_x^{-2} R) \partial_x^4 R dx & = & - \partial_t  \int 
(\partial_x R)^2 dx/2, \\ 
-\int (\partial_t  \partial_x^{-2}  R) \partial_x^2(\Psi R) dx & = & 
-\int (\partial_t  R) \Psi  R dx \\&=&
- \partial_t  \int  \Psi R^2 dx/2 
+\varepsilon \int  (\partial_{\tau} \Psi)  R^2 dx,\\
-\int (\partial_t  \partial_x^{-2} R) \partial_x^2(R^2) dx & = & - 
\int (\partial_t  R) R^2  dx = - \frac{1}{3}\partial_t   \int  R^3  dx,\\
-\int (\partial_t  \partial_x^{-2}  R) \mathrm{Res}(   \varepsilon^2 \Psi) dx
& = &  \int (\partial_t  \partial_x^{-1}  R) \partial_x^{-1} \mathrm{Res}(   \varepsilon^2 \Psi) dx.
\end{eqnarray*}
We can estimate
\begin{eqnarray*}
|\int (\partial_t  \partial_x^{-1}  R)  \partial_x^{-1}  \mathrm{Res}(   \varepsilon^2 \Psi) dx | & \leq &
\| \partial_t  \partial_x^{-1}  R \|_{L^2} \| \partial_x^{-1}  \mathrm{Res}(   \varepsilon^2 \Psi) \|_{L^2},\\
|\int(\partial_{\tau} \Psi) R^2 dx| & \leq &\| \partial_{\tau} \Psi \|_{L^{\infty}}\|  R \|_{L^2}^2 .
\end{eqnarray*}
For the energy 
\begin{eqnarray*} 
E&= & \int  (\partial_t \partial_x^{-1} R)^2 +R^2 +  (\partial_x R)^2  +2 \varepsilon^{\alpha} \Psi   R^2 + 2 \varepsilon^{(3+2 \alpha)/2} R^3/3 
dx \nonumber
\end{eqnarray*}
the following holds.
In case $ \alpha > 0 $ we have that for all $ M > 0 $ there exist $ C_1,\varepsilon_1 > 0 $ such that for all
$  \varepsilon \in (0,\varepsilon_1 ) $ we have 
$$ 
\|  R \|_{H^1} \leq C_1 E^{1/2}
$$ 
as long as $ E \leq M $. In case $ \alpha = 0 $  the energy $ E $ is an upper bound for the squared $ H^1 $-norm for $ \|\Psi \|_{L^{\infty}}$ sufficiently small, but independent of $ 0 < \varepsilon \ll 1 $.
Therefore, $ E $ satisfies the inequality 
\begin{eqnarray} \label{smitseq4}
\frac{dE}{dt} & \leq &  C \varepsilon^{1+\alpha} E + C \varepsilon^{(3+ 2\alpha)/2}      E^{3/2} + C  \varepsilon^{1+\alpha} E^{1/2}  
\\�& \leq  & 2 C \varepsilon^{1+\alpha} E + C \varepsilon^{(3+ 2\alpha)/2}      E^{3/2} + C  \varepsilon^{1+\alpha}, \nonumber
\end{eqnarray}
with a constant $ C $ independent of $   \varepsilon \in (0,\varepsilon_1 ) $.
Under the assumption that $ C \varepsilon^{1/2}      E^{1/2}  \leq 1 $ we obtain 
$$ 
\frac{dE}{dt} \leq (2 C+1) \varepsilon^{1+\alpha} E + C  \varepsilon^{1+\alpha}.
$$
Gronwall's inequality immediately gives the bound 
$$ 
\sup_{t \in [0,T_0/\varepsilon^{1+\alpha}]} E(t) = C T_0 e^{ (2 C+1)T_0} =: M = \mathcal{O}(1).
$$ 
Finally choosing $ \varepsilon_2 > 0 $ so small that $ C \varepsilon_2^{1/2}      M^{1/2}  \leq 1 $ gives the required estimate for all $  \varepsilon \in (0,\varepsilon_0 ) $
with  $ \varepsilon_0 = \min ( \varepsilon_1 , \varepsilon_2 )> 0 $ in all three cases.

\begin{remark}{\rm
The Boussinesq model \eqref{const} is a semilinear  dispersive system 
and so there is the local existence and uniqueness of solutions.
The variation of constant formula associated to the first order system for the variables 
$ u $ and $ \partial_t (\partial_x^4-\partial_x^2)^{-1/2} u $ is a contraction in the space 
$ C([-T_*,T_*],H^{\theta} \times H^{\theta}) $ for every $ \theta > 1/2 $ if $ T_* > 0 $ is sufficiently small. 
The local  existence and uniqueness of solutions combined with the previous estimates 
for instance 
yields the existence and uniqueness of solutions for all $ t \in [0,T_0/\varepsilon^3] $ in the 
KdV case and all $ t \in [0,T_0/\varepsilon] $ in the Whitham case. 
}
\end{remark}

\section{Derivation of the amplitude equations}

In this section we come back to the spatially periodic situation.
The derivation of the amplitude equations is less obvious than in the spatially homogeneous 
case. In order to derive the amplitude equations we expand \eqref{Boussinesq} into the  
eigenfunctions of the linear problem. As in \cite{BSTU06} after this expansion we are back in the spatially homogeneous 
set-up except that Fourier transform has been replaced by the  Bloch transform. 

\subsection{Spectral properties}
The linearized problem 
\begin{align}
\label{Boussinesqlin}
\partial_t^2 u(x,t) 
=
\partial_x(a(x)\partial_x u(x,t))
-\partial_x^2(b(x)\partial_x^2u(x,t) )
\end{align}
is solved by so called Bloch modes
$$ 
u(x,t)= w(x) e^{ilx} e^{i \omega t},
$$
with $ w $ being $ 2\pi $-periodic w.r.t. $ x $
satisfying 
$$
-(\partial_x+ il)(a(x)(\partial_x+il) w(x))
+(\partial_x+il)^2(b(x)(\partial_x+il)^2 w(x) )
= \omega^2 w(x).
$$
The left hand side defines a self-adjoint elliptic operator 
$ L_{l}(\partial_x):H^{\theta+4}\to H^{\theta}$.
Hence, for fixed $ l$ there exists a countable set of eigenvalues $ \lambda_n(l) $, with $ n \in \mathbb{N}$,
ordered such that $ \lambda_{n+1}(l) \geq \lambda_n(l) $,
with associated eigenfunctions $ w_n(x,l)$.

\begin{lemma}
For $ l = 0 $ the operator $ L_0(\partial_x)$
possesses the simple eigenvalue $ \lambda_1(0) =0 $
associated to the eigenfunction $ \widetilde{w}_1(0,x) = 1$.
\end{lemma}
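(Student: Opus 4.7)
The plan is to verify directly that the constant function lies in the kernel of $L_0(\partial_x)$, then exploit the positivity assumptions on $a$ and $b$ to show via an energy-type computation that this kernel is one-dimensional and that all other eigenvalues are strictly positive.

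First, I would plug $w(x) = 1$ into the defining equation. Since $\partial_x 1 = 0$, both terms $-\partial_x(a(x)\partial_x w)$ and $\partial_x^2(b(x)\partial_x^2 w)$ vanish identically, so $L_0(\partial_x)\cdot 1 = 0$ and $0$ is indeed an eigenvalue with $\widetilde{w}_1(0,x)=1$ as eigenfunction.

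Next, to show that $\lambda_1(0) = 0$ is the lowest eigenvalue and that it is simple, I would pair $L_0(\partial_x) w$ with $w$ in the $L^2$-inner product over one period $[0,2\pi]$ and integrate by parts. Using periodicity to drop boundary terms, I obtain
\begin{equation*}
\langle L_0(\partial_x) w, w\rangle_{L^2(0,2\pi)} = \int_0^{2\pi} a(x) |\partial_x w(x)|^2\,dx + \int_0^{2\pi} b(x) |\partial_x^2 w(x)|^2\,dx.
\end{equation*}
Since $\inf_x a(x) > 0$ and $\inf_x b(x) > 0$, this quadratic form is non-negative, and therefore by the variational characterization of eigenvalues of the self-adjoint elliptic operator $L_0(\partial_x)$ all eigenvalues satisfy $\lambda_n(0) \geq 0$. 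Combined with the first step, this forces $\lambda_1(0) = 0$.

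Finally, for simplicity: if $w \in H^4_{\mathrm{per}}$ satisfies $L_0(\partial_x) w = 0$, then the energy identity above yields $\int a(x)|\partial_x w|^2\,dx = 0$, and since $a(x) \geq a_0 > 0$ this forces $\partial_x w \equiv 0$, i.e., $w$ is constant. Hence the kernel is one-dimensional, spanned by $\widetilde{w}_1(0,\cdot) = 1$, proving simplicity. No real obstacle arises here; the only point that would need a line of justification is the self-adjointness of $L_0(\partial_x)$ on the periodic domain, but this is immediate from the divergence-form structure together with periodic boundary conditions and has effectively already been asserted in the paragraph preceding the lemma.
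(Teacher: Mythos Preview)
Your proof is correct and follows essentially the same approach as the paper: verify $L_0(\partial_x)1=0$ directly, use the quadratic form $\langle L_0 w,w\rangle=\int a|\partial_x w|^2+\int b|\partial_x^2 w|^2\geq 0$ to conclude non-negativity and simplicity via $\partial_x w\equiv 0$. The paper's argument is slightly terser but identical in substance.
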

\noindent
{\bf Proof.}
Obviously we have $ L_0(\partial_x) 1 = 0 $.
Moreover, we have 
$$ 
(w,L_0(\partial_x) w )_{L^2} = \int_{-1/2}^{1/2} a(x) (\partial_x w(x))^2 dx + \int_{-1/2}^{1/2} b(x) (\partial_x^2 w(x))^2 dx \geq 0 .
$$
Hence  $ L_0(\partial_x) w = 0 $ implies $ \partial_x w = 0 $. From the $ 2 \pi $-periodicity it follows $ w = const$. 
Hence $ \lambda_1(0) = 0 $ is a simple eigenvalue. 
\qed
\medskip

It is well known that the curves $ l \mapsto 
\lambda_n(l) $ and $ l \mapsto \widetilde{w}_n(l,\cdot) $
are smooth w.r.t. $ l $ for simple eigenvalues.
Hence, there exists a $ \delta_0 > 0 $ such that 
for $ l \in [-\delta_0,\delta_0] $ the smallest eigenvalue
$  \lambda_1(l) $ is separated from the rest of the spectrum.
Since  $ L_l(\partial_x) $ is self-adjoint and positive-definite for all $ l $ we have 
$  \lambda_1(l) \geq 0  $ for all $ l $.
In the KdV equation only odd and in the Whitham system only even spatial 
derivatives occur. This is a consequence of the following lemma.
\begin{lemma} \label{craven32}
The curve  $  l  \mapsto  \lambda_1(l) $ for $ l \in [-\delta_0,\delta_0] $ is an even real-valued function. The associated eigenfunctions satisfy  $ \widetilde{w}_1(l,x) =  \overline{\widetilde{w}_1(-l,x)} $. 
Under the assumption that the coefficient functions $ a $ and $ b $ are even, the eigenfunctions possess
an expansion 
$$ 
\widetilde{w}_1(l,x) = \sum_{j= 0}^{\infty} (il)^j g_j(x) ,
$$ 
with $ g_0(x) = 1 $,  $ \int_0^{2 \pi} g_j(x) dx = 0 $ for $ j \geq 1 $,
$$ 
g_{2j}(x) = g_{2j}(-x) \in \mathbb{R} \qquad \textrm{and} \qquad g_{2j+1}(x) = - g_{2j+1}(-x)  \in \mathbb{R} .
$$ 
\end{lemma}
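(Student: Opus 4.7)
The plan is to extract the symmetry properties of the eigenpair $(\lambda_1(l), \widetilde{w}_1(l, \cdot))$ from the corresponding symmetries of the operator family $L_l(\partial_x)$, combined with analytic perturbation theory for the simple eigenvalue branch near $l = 0$.

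First I would observe that complex conjugation turns $(\partial_x + il)$ into $(\partial_x - il)$ while leaving the real coefficients $a$ and $b$ untouched, so $\overline{L_l \widetilde{w}} = L_{-l}\overline{\widetilde{w}}$. Since $L_l$ is self-adjoint on $L^2_{\mathrm{per}}([0, 2\pi])$ (on periodic functions the formal adjoint of $\partial_x + il$ is $-(\partial_x + il)$, which gives $L_l^{*} = L_l$), the eigenvalue $\lambda_1(l)$ is real. Conjugating the eigenvalue equation then shows that $\overline{\widetilde{w}_1(l, \cdot)}$ is an eigenfunction of $L_{-l}$ for the same eigenvalue. Because $\lambda_1$ is the separated, simple branch on $[-\delta_0, \delta_0]$, this forces $\lambda_1(-l) = \lambda_1(l)$ and permits a choice of phase along the branch so that $\widetilde{w}_1(-l, x) = \overline{\widetilde{w}_1(l, x)}$.

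Next, analytic perturbation theory (Kato) applied to the polynomial-in-$l$ family $L_l$ with the simple, isolated eigenvalue $\lambda_1(0) = 0$ yields an analytic continuation of both the eigenvalue and the eigenprojection in a complex neighbourhood of $l = 0$. Writing the resulting Taylor series in powers of $il$ gives
$$\widetilde{w}_1(l, x) = \sum_{j=0}^\infty (il)^j g_j(x),$$
with $g_0(x) = \widetilde{w}_1(0, x) = 1$. Fixing the normalization $\int_0^{2\pi} \widetilde{w}_1(l, x)\,dx = 2\pi$ (admissible for $|l|$ small since the projection onto constants is nondegenerate at $l = 0$) enforces $\int_0^{2\pi} g_j(x)\,dx = 0$ for $j \ge 1$. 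Substituting the series into $\widetilde{w}_1(l, x) = \overline{\widetilde{w}_1(-l, x)}$ and matching powers of $l$ (using $\overline{(-il)^j} = (il)^j$) forces each $g_j$ to be real.

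Finally, under (SYM) the parity operator $(Pw)(x) := w(-x)$ satisfies $P(\partial_x + il)P = -(\partial_x - il)$, while $PaP = a$ and $PbP = b$ since $a, b$ are even; a direct computation then gives $P L_l P = L_{-l}$. Composing $P$ with complex conjugation therefore produces an operation that commutes with $L_l$, and applying it to $\widetilde{w}_1(l, \cdot)$ gives another eigenfunction for $\lambda_1(l)$. Simplicity plus the already fixed normalization (preserved by both $P$ and conjugation because $g_0 \equiv 1$) yields $\widetilde{w}_1(l, x) = \overline{\widetilde{w}_1(l, -x)}$. Inserting the power series and using that the $g_j$ are real produces $(-1)^j g_j(-x) = g_j(x)$, i.e.\ $g_{2j}$ is even and $g_{2j+1}$ is odd. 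The main technical subtlety is to pin down a single analytic branch $l \mapsto \widetilde{w}_1(l, \cdot)$ whose phase is consistent with \emph{all} three symmetries simultaneously; this is resolved by anchoring the phase at $l = 0$, where $\widetilde{w}_1(0, \cdot) \equiv 1$ is manifestly invariant under both parity and conjugation, and propagating the choice analytically along the connected interval $[-\delta_0, \delta_0]$.
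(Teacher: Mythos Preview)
Your argument is correct; the first two assertions are handled exactly as in the paper (self-adjointness of $L_l$ plus the reality of the coefficients). The difference lies in how you establish the parity and reality of the $g_j$. The paper proceeds constructively: it writes down the hierarchy of equations $L_0 g_j = f_j$ obtained by equating powers of $il$, checks at each step that the inhomogeneity $f_j$ has the required parity and zero mean, and solves on the corresponding invariant subspace where $L_0$ is bounded away from zero. You instead exploit the operator symmetry $P L_l P = L_{-l}$ (from evenness of $a,b$) together with the conjugation symmetry, compose them to an involution commuting with $L_l$, and read off the parity of every $g_j$ in one stroke from the resulting identity $\widetilde{w}_1(l,x)=\overline{\widetilde{w}_1(l,-x)}$. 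Your route is cleaner and avoids the induction entirely; the paper's route is more explicit and actually produces the equation $L_0 g_1 = \partial_x a$, which is what makes the formula for $\nu_2$ in the subsequent lemma concrete. Your invocation of Kato's analytic perturbation theory is also a sharper justification of the convergence of the series than the paper's appeal to smoothness.
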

{\bf Proof.}
The first two statements follow from the fact that for fixed $ l $ the operator $ L_l(\partial_x) $ is  self-adjoint
and from the fact that \eqref{Boussinesq} is a real problem.  For $ (il)^0  $ we obtain 
$$
-\partial_x(a(x)\partial_x g_0(x))
+\partial_x^2(b(x)\partial_x^2 g_0(x) )
= 0
$$
which is, as we already know, uniquely been solved by $ g_0(x) = 1 $. For $ (il)^1  $ we obtain 
$$
-\partial_x(a(x)\partial_x g_1(x))
+\partial_x^2(b(x)\partial_x^2 g_1(x) )
- \partial_x a(x) = 0 .
$$
The term $ \partial_x a(x) $ is odd. The subspace of odd functions is invariant for the  
differential operator $  L_0(\partial_x) = -\partial_x(a(x)(\partial_x \cdot)
+\partial_x^2(b(x)\partial_x^2 \cdot ) $. Moreover in this subspace its spectrum is bounded away from zero
such that this equation possesses a unique
odd solution $ g_1 = g_1(x) $.
For $ (il)^2  $ we obtain 
$$
-\partial_x(a(x)\partial_x g_2(x))
+\partial_x^2(b(x)\partial_x^2 g_2(x) )  + 1 + f_2(x)
= 1 
$$
with $ f_2(x) $ an even function depending on $ a $, $ b $, $ g_0 $, and $ g_1 $ and possessing vanishing mean value.
In the subspace of vanishing mean value the differential operator $ L_0(\partial_x) $ possesses spectrum which is bounded away from zero
such that this equation possesses a unique
even solution $ g_2 = g_2(x) $. With the same arguments the next  orders with the stated properties can be computed. The convergence of the series in a neighborhood of  $ l = 0 $ 
in  $ H^{\theta} $ for every $ \theta \geq 0 $
follows from the smoothness 
of the curve of simple eigenfunctions w.r.t. $ l $ and the smoothness of the coefficient functions 
$ a $, $ b $, and $ c $ w.r.t. $ x $.
\qed
\medskip

The KdV equation, the inviscid
Burgers equation, and the Whitham system describe the modes associated to the curve $ \lambda_1 $
close to $ l = 0 $. Therefore, in order to derive these amplitude equations we consider the Bloch transform 
$$
u(x,t) = \int_{-1/2}^{1/2} \widetilde{u}(l,x,t)e^{ilx} 
dx
$$
of 
\eqref{Boussinesq}, namely 
\begin{equation} \label{ortloff}
\partial_t^2 \widetilde{u}(l,x,t)= 
- L_l(\partial_x) \widetilde{u}(l,x,t) + N_l(\partial_x)(\widetilde{u})(l,x,t)
\end{equation}
where 
$$
N_l(\partial_x)(\widetilde{u})(l,x,t) = (\partial_x+ il)(c(x)(\partial_x+il) \int_{-1/2}^{1/2}
\widetilde{u}(l-m,x,t) \widetilde{u}(m,x,t) dm.
$$
Then we make the ansatz 
$$
\widetilde{u}(l,x,t) = \chi_{[-\delta_0/2,\delta_0/2]}(l) \widetilde{u}_1(l,t) \widetilde{w}_1(l,x) +\widetilde{v}(l,x,t)
$$
with 
$$
\int_{0}^{2 \pi} \overline{\widetilde{w}_1(l,x)}
\widetilde{v}(l,x,t) dx= 0
$$
for $ l \in [-\delta_0/2,\delta_0/2] $
and find 
\begin{eqnarray*}
\partial_t^2  \widetilde{u}_1(l,t) & = &  - \lambda_1(l) \widetilde{u}_1(l,t)  + P_c(l) N_l(\partial_x)(\widetilde{u})(l,t) ,\\
\partial_t^2 \widetilde{v}(l,x,t)& = & 
- L_l(\partial_x) \widetilde{v}(l,x,t) + P_s(l) N_l(\partial_x)(\widetilde{u})(l,x,t),
\end{eqnarray*}
where 
\begin{eqnarray*}
(P_c \widetilde{u})(l,t) & = & \frac{1}{2\pi}\chi_{[-\delta_0/2,\delta_0/2]}(l) \int_{0}^{2 \pi} \overline{\widetilde{w}_1(l,x)} \widetilde{u}(l,x,t) dx, \\
(P_s \widetilde{u})(l,x,t) & = & \widetilde{u}(l,x,t) - (P_c  \widetilde{u})(l,t){w}_1(l,x).
\end{eqnarray*}
All amplitude equations which we have in mind can be derived in a very similar way.
They describe the evolution of the $ \widetilde{u}_1$
modes which are concentrated in an 
$ \mathcal{O}(\varepsilon)$ neighborhood 
of the Bloch wave number $ l = 0$.
In all three cases we make an ansatz 
\begin{eqnarray} \label{blochansatz}
\widetilde{u}_1(l,t) = \varepsilon^{-1} \varepsilon^{\alpha} \chi_{[-\delta_0/4,\delta_0/4]}(\frac{l}{\varepsilon}) \widehat{A}(\frac{l}{\varepsilon},\varepsilon^{1+ \alpha} t) e^{i l c t}
\end{eqnarray}
with $ \alpha =2 $ and $ c >0  $ for the KdV approximation, $ \alpha \in (0,2) $ and $ c >0 $ for the 
inviscid  Burgers approximation, and $ \alpha =0 $ and $ c =0 $ for the Whitham approximation,
cf. the text below Figure \ref{fig:example}.
The amplitude $  \widehat{A} $ will be defined in Fourier space and the cut-off function $ \chi_{[-\delta_0/4,\delta_0/4]}(\frac{l}{\varepsilon}) $ allows to transfer $  \widehat{A} $ into Bloch space.
In the following we use the abbreviation
\begin{equation} \label{crew1}
\widetilde{A}(\frac{l}{\varepsilon},\varepsilon^{1+ \alpha} t) =  \chi_{[-\delta_0/4,\delta_0/4]}(\frac{l}{\varepsilon}) \widehat{A}(\frac{l}{\varepsilon},\varepsilon^{1+ \alpha} t).
\end{equation}
For each of the three approximations we have to derive 
the associated amplitude equation and  to compute and estimate the residual terms
\begin{eqnarray*}
\textrm{Res}(\widetilde{u})(l,x,t) & = &
- \partial_t^2 \widetilde{u}(l,x,t) -
L_l(\partial_x) \widetilde{u}(l,x,t) + N_l(\partial_x)(\widetilde{u})(l,x,t).
\end{eqnarray*}

\subsection{Derivation of the KdV and the inviscid  Burgers equation}

The amplitude equations which we have in mind have derivatives in front of the nonlinear terms.
Hence before deriving these equations we need to prove a number of properties about the 
nonlinear terms. 
We introduce kernels $ s_{11}^1(l,l-m,m), \ldots ,  s_{vv}^v(l,l-m,m) $ by 
\begin{eqnarray*}
(P_c N_l(\partial_x)(\widetilde{u}))(l,t) & = &  \int_{-1/2}^{1/2} s_{11}^1(l,l-m,m) \widetilde{u}_1(l-m,t)\widetilde{u}_1(m,t)dm \\
&& +  \int_{-1/2}^{1/2} s_{1v}^1(l,l-m,m) \widetilde{u}_1(l-m,t)\widetilde{v}(m,x,t)dm
 \\
&& +  \int_{-1/2}^{1/2} s_{v1}^1(l,l-m,m) \widetilde{v}(l-m,x,t)\widetilde{u}_1(m,t)dm
 \\
&& +  \int_{-1/2}^{1/2} s_{vv}^1(l,l-m,m) \widetilde{v}(l-m,x,t)\widetilde{v}(m,x,t)dm
\end{eqnarray*}
and 
\begin{eqnarray*}
(P_s N_l(\partial_x)(\widetilde{u}))(l,x,t) & = &  \int_{-1/2}^{1/2} s_{11}^v(l,l-m,m) \widetilde{u}_1(l-m,t)\widetilde{u}_1(m,t)dm \\
&& +  \int_{-1/2}^{1/2} s_{1v}^v(l,l-m,m) \widetilde{u}_1(l-m,t)\widetilde{v}(m,x,t)dm
 \\
&& +  \int_{-1/2}^{1/2} s_{v1}^v(l,l-m,m) \widetilde{v}(l-m,x,t)\widetilde{u}_1(m,t)dm
 \\
&& +  \int_{-1/2}^{1/2} s_{vv}^v(l,l-m,m) \widetilde{v}(l-m,x,t)\widetilde{v}(m,x,t)dm.
\end{eqnarray*}

For the derivation of the KdV and the Burgers equation we need
\begin{lemma} \label{lem33}
We have 
$$ 
|s_{11}^1(l,l-m,m) - \nu_2 l^2  | \leq  C |l|(l^2 +  (l-m)^2 + m^2),
$$
where 
\begin{equation} \label{nu2}
\nu_2 = - \frac{1}{2 \pi} \int_{0}^{2 \pi} c(x)  (1+ \partial_x g_1(x) )^2  dx. 
\end{equation}
\end{lemma}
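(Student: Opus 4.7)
My plan is to extract an explicit integral representation for $s_{11}^1$ from the definitions and then Taylor-expand everything to second order in the wave numbers using Lemma~\ref{craven32}. Substituting the linear-Bloch ansatz $\widetilde{u}(k,x)\mapsto \widetilde{u}_1(k)\widetilde{w}_1(k,x)$ into $P_c N_l(\partial_x)\widetilde{u}$ and reading off the coefficient of $\widetilde{u}_1(l-m)\widetilde{u}_1(m)$, I obtain
$$
s_{11}^1(l,l-m,m) = \frac{1}{2\pi}\int_0^{2\pi} \overline{\widetilde{w}_1(l,x)}\,(\partial_x+il)\bigl[c(x)(\partial_x+il)\bigl(\widetilde{w}_1(l-m,x)\widetilde{w}_1(m,x)\bigr)\bigr]\,dx.
$$
A single integration by parts in $x$ (valid by $2\pi$-periodicity of the integrand) together with the identity $(\partial_x-il)\overline{\widetilde{w}_1(l,x)} = \overline{(\partial_x+il)\widetilde{w}_1(l,x)}$ produces the symmetric representation
$$
s_{11}^1(l,l-m,m) = -\frac{1}{2\pi}\int_0^{2\pi} c(x)\,\overline{(\partial_x+il)\widetilde{w}_1(l,x)}\,(\partial_x+il)\bigl(\widetilde{w}_1(l-m,x)\widetilde{w}_1(m,x)\bigr)\,dx.
$$

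Writing $l_1=l-m$ and $l_2=m$, Lemma~\ref{craven32} gives the expansion $\widetilde{w}_1(k,x) = 1 + ik\,g_1(x) + O(k^2)$ in $H^\theta$ with real-valued $g_1$, and hence $(\partial_x+ik)\widetilde{w}_1(k,x) = ik(1+\partial_x g_1(x)) + O(k^2)$. Because $l_1+l_2=l$, the product expands as $\widetilde{w}_1(l_1,x)\widetilde{w}_1(l_2,x) = 1 + il\,g_1(x) + O(l_1^2+l_2^2)$, so $(\partial_x+il)\bigl(\widetilde{w}_1(l_1,\cdot)\widetilde{w}_1(l_2,\cdot)\bigr)(x) = il(1+\partial_x g_1(x)) + O(l_1^2+l_2^2)$. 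Multiplying the two expansions, the product of the leading parts is $\overline{il(1+\partial_x g_1)}\cdot il(1+\partial_x g_1) = l^2(1+\partial_x g_1(x))^2$, whose integral against $-c(x)/(2\pi)$ over $[0,2\pi]$ is precisely $\nu_2 l^2$. Each of the three remaining cross terms is the product of one factor of order $|l|$ with one factor of quadratic order in $(l,l_1,l_2)$; using $l^2 \le 2(l_1^2+l_2^2)$ in the bounded Bloch region, every such term is $O(|l|(l^2+l_1^2+l_2^2))$.

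The crux of the argument is that last step: ensuring that each error term carries at least one explicit factor of $|l|$, rather than merely being quadratic in the wave numbers. This extra $|l|$ is not visible from a naive power count, since the expansion of $(\partial_x+il)\bigl(\widetilde{w}_1(l_1)\widetilde{w}_1(l_2)\bigr)$ has genuinely second-order-in-$(l_1,l_2)$ remainders such as $-(l_1^2+l_2^2)\partial_x g_2$. It is produced by the algebraic fact that $(\partial_x+il)\widetilde{w}_1(l,x)$ vanishes at $l=0$ (because $\widetilde{w}_1(0,x)\equiv 1$), so this prefactor in the symmetric integrand obtained in Step~1 is itself $O(|l|)$. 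That is why integrating by parts \emph{before} Taylor-expanding is the right setup.
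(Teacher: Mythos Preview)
Your proof is correct and follows essentially the same route as the paper: first integrate by parts to obtain the symmetric representation with the factor $\overline{(\partial_x+il)\widetilde{w}_1(l,x)}$, then Taylor-expand using $\widetilde{w}_1(k,x)=1+ik\,g_1(x)+O(k^2)$ so that this factor is $O(|l|)$ and the required extra $|l|$ in the remainder is automatic. Your explicit remark that the integration by parts must precede the expansion (to make the $O(|l|)$ prefactor visible) is exactly the mechanism at work in the paper's computation, stated more transparently.
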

\noindent
{\bf Proof.}
Due to Lemma \ref{craven32} we have 
\begin{equation} \label{gmexpand}
\widetilde{w}_1(l,x) = 1 + il g_1(x) + \mathcal{O}(l^2) 
\end{equation}
where $ g_1(x) \in \mathbb{R} $ with $ \int_0^{2\pi} g_1(x) dx = 0 $.
This expansion yields
\begin{eqnarray*} 
&& 2 \pi s_{11}^1(l,l-m,m) 
\\& = & \int_{0}^{2 \pi}\overline{\widetilde{w}_1(l,x)} (\partial_x+ il)(c(x)(\partial_x+il)
( \widetilde{w}_1(l-m,x)  \widetilde{w}_1(m,x)) dx
\\& = & \int_{0}^{2 \pi} (1 - il g_1(x) + \mathcal{O}(l^2) )
(\partial_x+ il)(c(x)(\partial_x+il) \\ && \qquad  \times ((1 + i(l-m) g_1(x) + \mathcal{O}((l-m)^2) )(1 + im g_1(x) + \mathcal{O}(m^2) )) dx 
\\& = & - \int_{0}^{2 \pi} c(x) ( (\partial_x-  il) (1 - il g_1(x) + \mathcal{O}(l^2) ) )
((\partial_x+il) \\&& \qquad  \times ((1 + i(l-m) g_1(x) + \mathcal{O}((l-m)^2) )(1 + im g_1(x) + \mathcal{O}(m^2) )) dx 
\\& = & - \int_{0}^{2 \pi} c(x)  (-il  - il \partial_x g_1(x) + \mathcal{O}(l^2) )\\ && \qquad \times
((\partial_x+il)  ((1 + i l g_1(x) + \mathcal{O}((l-m)^2 + m^2) ) dx 
\\& = & - \int_{0}^{2 \pi} c(x)  (-il  - il \partial_x g_1(x) + \mathcal{O}(l^2) )\\ && \qquad \times
( il  + i l \partial_x g_1(x) + \mathcal{O}((l-m)^2 + m^2) ) dx 
\\& = & \nu_2 l^2 +  \mathcal{O}( |l| (l^2 +  (l-m)^2 + m^2)).
\end{eqnarray*}
We remark already at this point that due to  the fact that $ a $, $ b $, and $ c $ are assumed to be even we have for symmetry reasons that the 
higher order terms are not only $  \mathcal{O}( |l| (l^2 +  (l-m)^2 + m^2)) $, but  $\mathcal{O}( l^4 +  (l-m)^4 + m^4) $.
See below.\qed 
\bigskip

The following derivation of amplitude equations in Fourier or Bloch space 
is straightforward and documented in various papers. We refer to 
\cite[Chapter 5]{Schn11OW} for an introduction.

\subsubsection{The KdV equation.}

We start with the KdV approximation $ \varepsilon^2 \Psi $ which is defined via 
\eqref{blochansatz} for $ \alpha = 2 $ and which is inserted into $ \textrm{Res}(\widetilde{u}) $. 
We find with $ \widetilde{u}_1(l,t)  = \varepsilon \widetilde{A}(K,T) \mathbf{E} $, 
$ \mathbf{E}  = e^{i \varepsilon K c t} $,
$ T = \varepsilon^{3} t $, and $ l = \varepsilon K $ that 
\begin{eqnarray*} 
P_c (\textrm{Res}(\widetilde{u}))(l,t) & = & 
- \partial_t^2  \widetilde{u}_1(l,t)   - \lambda_1(l) \widetilde{u}_1(l,t)  \\ && + 
\int_{-1/2}^{1/2} s_{11}^1(l,l-m,m) \widetilde{u}_1(l-m,t)\widetilde{u}_1(m,t)dm
\\& = & \varepsilon^3 c^2 K^2 \widetilde{A}(K,T)  \mathbf{E} -  2 \varepsilon^5 i c K (\partial_T \widetilde{A}(K,T) )  \mathbf{E} - \varepsilon^7 (\partial_T^2 \widetilde{A}(K,T) )  \mathbf{E} 
\\&&
-  \varepsilon^3 \lambda_1''(0) K^2 \widetilde{A} (K,T)  \mathbf{E}/2 
 -  \varepsilon^5 \lambda_1''''(0) K^4 \widetilde{A}(K,T)   \mathbf{E}/24 
 + \mathcal{O}( \varepsilon^7)
\\&&
+ \varepsilon^5 \int_{-1/(2  \varepsilon)}^{1/(2  \varepsilon)} \nu_2 K^2 \widetilde{A}(K-M,T)\widetilde{A}(M,T)
dM  \mathbf{E}+ \mathcal{O}( \varepsilon^6).
\end{eqnarray*}
If $ \widehat{A}(\cdot,T) \in L^2_s $ then the error made by replacing 
$ \int_{-1/(2  \varepsilon)}^{1/(2  \varepsilon)} \ldots dM $ by 
$ \int_{- \infty}^{\infty} \ldots dM $ is $ \mathcal{O}(\varepsilon^{s-1/2}) $.
Hence by equating the coefficients of  $  \varepsilon^3 $ and $  \varepsilon^5 $
to zero we find 
$ c^2=  \lambda_1''(0)/2 $ 
and 
$ \widehat{A} $ to satisfy 
$$ 
-  2  i c  \partial_T \widehat{A}(K,T)  - \lambda_1''''(0) K^3 \widehat{A}(K,T)/24 
+  \int_{- \infty}^{\infty}  \nu_2 K \widehat{A}(K-M,T)\widehat{A}(M,T) 
dM = 0,
$$
respectively, $ A $   to satisfy the KdV equation
\begin{equation} \label{main_KdV}
 2  c  \partial_T  A(X,T)  + \lambda_1''''(0) \partial_X^3 A(X,T)/24 +
 \nu_2 \partial_X (A(X,T)^2)= 0.
\end{equation}

\subsubsection{The inviscid Burgers equation}

Due to the explanations  in the Appendix \ref{appA} we restrict to the case $ \alpha = 1 $.
We insert the inviscid Burgers  approximation $ \varepsilon^{\alpha} \Psi $, which is defined via 
\eqref{blochansatz} for $ \alpha = 1 $, into $ \textrm{Res}(\widetilde{u}) $. 
We find with $ \widetilde{u}_1(l,t)  =  \widetilde{A}(K,T) \mathbf{E} $, 
$ \mathbf{E}  = e^{i \varepsilon K c t} $,
$ T = \varepsilon^{2} t $, and $ l = \varepsilon K $ that 
\begin{eqnarray*} 
P_c (\textrm{Res}(\widetilde{u}))(l,t) & = & 
- \partial_t^2  \widetilde{u}_1(l,t)   - \lambda_1(l) \widetilde{u}_1(l,t)  \\�&& + 
\int_{-1/2}^{1/2} s_{11}^1(l,l-m,m) \widetilde{u}_1(l-m,t)\widetilde{u}_1(m,t)dm
\\& = & \varepsilon^2 c^2 K^2 \widetilde{A}(K,T)  \mathbf{E} -  2 \varepsilon^3 i c K (\partial_T \widetilde{A}(K,T) )  \mathbf{E} - \varepsilon^4 (\partial_T^2 \widetilde{A}(K,T) )  \mathbf{E} 
\\&&
-  \varepsilon^2 \lambda_1''(0) K^2 \widetilde{A} (K,T)  \mathbf{E}/2 
 -  \varepsilon^4 \lambda_1''''(0) K^4 \widetilde{A}(K,T)   \mathbf{E}/24 
 + \mathcal{O}( \varepsilon^4)
\\&&
+ \varepsilon^3 \int_{-1/(2  \varepsilon)}^{1/(2  \varepsilon)} \nu_2 K^2 \widetilde{A}(K-M,T)\widetilde{A}(M,T)
dM  \mathbf{E}+ \mathcal{O}( \varepsilon^4).
\end{eqnarray*}
We proceed as above and
equate the coefficients of  $  \varepsilon^2 $ and $  \varepsilon^3 $
to zero. We find 
$ c^2=  \lambda_1''(0)/2 $ 
and 
$ \widehat{A} $ to satisfy 
$$ 
-  2  i c  \partial_T \widehat{A}(K,T)  
+  \int_{- \infty}^{\infty}  \nu_2 K \widehat{A}(K-M,T)\widehat{A}(M,T) 
dM = 0
$$
respectively $ A $   to satisfy the inviscid Burgers equation
\begin{equation} \label{main_Burgers}
 2  c  \partial_T  A(X,T)  +
 \nu_2 \partial_X (A(X,T)^2)= 0.
\end{equation}

\subsection{Derivation of the Whitham system}

The derivation of the Whitham system is much more involved since already in the derivation 
the $ \widetilde{v} $ part has to be included. 
Due to the symmetry assumption {\bf (SYM)} with $ u = u(x,t) $, also $ u = u(-x,t) $
is a solution of  \eqref{Boussinesq}. 
As a consequence in  \eqref{Boussinesq} all terms must contain an even number of $ \partial_x $-derivatives.
Since in Bloch space  
\begin{eqnarray*}
u(-x,t) & = & \int_{-1/2}^{1/2} \widetilde{u}(-x,l) e^{-ilx} dl \\
& = &  - \int_{1/2}^{-1/2} \widetilde{u}(-x,-l) e^{ilx} dl 
 =   \int_{-1/2}^{1/2} \widetilde{u}(-x,-l) e^{ilx} dl 
\end{eqnarray*}
with $ \widetilde{u} = \widetilde{u}(l,x,t) $, also $ \widetilde{u} = \widetilde{u}(-l,-x,t) $ is a 
solution of the Bloch wave transformed system \eqref{ortloff}. As a consequence in  \eqref{ortloff} all terms must contain an even number of $ \partial_x $-derivatives or $ il $, $ i(l-m) $, or $ im $  factors, i.e., for instance $ il \partial_x $ can occur, but $ -l^2 \partial_x $ not.
Before we start with the derivation of the Whitham system we additional need 
that in some  of the  kernel functions $ s_{j_1j_2}^j $ at least one $ l $ factor occurs.
\begin{lemma} \label{lem34}
We have 
$$ 
|s_{vv}^1(l,l-m,m)| \leq  C |l|
$$
and 
$$ 
|s_{11}^v(l,l-m,m)| \leq  C (|l| +  (l-m)^2 + m^2).
$$
\end{lemma}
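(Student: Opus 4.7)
Both bounds follow from the Taylor expansion of the Bloch eigenfunction around $l=0$ provided by Lemma~\ref{craven32}, combined with either integration by parts or the explicit $P_s$-subtraction. The decisive small factors come from two independent mechanisms: for $s_{vv}^1$, the identity $\widetilde{w}_1(0,x)=1$ forces $(\partial_x+il)\widetilde{w}_1(l,x)$ to vanish at $l=0$; for $s_{11}^v$, the subtraction built into $P_s$ together with Lemma~\ref{lem33} removes the $\widetilde{w}_1(l,\cdot)$-component so that only the transverse part, which is forced to vanish at $l=l-m=m=0$, remains.

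For $|s_{vv}^1(l,l-m,m)|\le C|l|$ I start from the defining identity
\[
2\pi\, s_{vv}^1(l,l-m,m)=\chi_{[-\delta_0/2,\delta_0/2]}(l)\int_0^{2\pi}\overline{\widetilde{w}_1(l,x)}(\partial_x+il)\bigl(c(x)(\partial_x+il)[\widetilde{v}(l-m,x)\widetilde{v}(m,x)]\bigr)dx,
\]
understood as a bilinear symbol in the $\widetilde v$-variables. The adjoint of $\partial_x+il$ with respect to the pairing $\langle g,f\rangle=\int_0^{2\pi}\bar g\,f\,dx$ is $-(\partial_x+il)$, so integration by parts moves the outer derivative onto $\overline{\widetilde{w}_1(l,x)}$, with no boundary contribution thanks to $2\pi$-periodicity. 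The expansion $\widetilde{w}_1(l,x)=1+il\,g_1(x)+O(l^2)$ from Lemma~\ref{craven32} yields
\[
(\partial_x+il)\widetilde{w}_1(l,x)=il\bigl(1+\partial_x g_1(x)\bigr)+O(l^2),
\]
which is $O(|l|)$ uniformly in $x$. The remaining factor is uniformly bounded for $l\in[-\delta_0/2,\delta_0/2]$, which produces the stated bound.

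For $|s_{11}^v(l,l-m,m)|\le C(|l|+(l-m)^2+m^2)$ I use
\[
s_{11}^v(l,l-m,m)=(\partial_x+il)\bigl(c(x)(\partial_x+il)[\widetilde{w}_1(l-m,x)\widetilde{w}_1(m,x)]\bigr)-s_{11}^1(l,l-m,m)\widetilde{w}_1(l,x).
\]
Lemma~\ref{craven32} gives $\widetilde{w}_1(l-m,x)\widetilde{w}_1(m,x)=1+il\,g_1(x)+O((l-m)^2+m^2)$, the cross term $(l-m)m\,g_1^2(x)$ being absorbed into $(l-m)^2+m^2$ via $2|(l-m)m|\le(l-m)^2+m^2$. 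Applying $(\partial_x+il)$, multiplying by $c(x)$, and applying $(\partial_x+il)$ once more produces a leading piece $il\,\partial_x[c(x)(1+\partial_x g_1(x))]$ of order $|l|$, an $O(l^2)$ piece from the two $il$ factors, and a remainder of order $(l-m)^2+m^2$ coming from the Taylor tail. Lemma~\ref{lem33} gives $s_{11}^1(l,l-m,m)=O(l^2)$, so the subtracted term is also absorbed into $|l|$ on the Bloch torus where $|l|\le 1/2$.

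The main obstacle I expect is the bookkeeping: tracking through the Taylor expansions which powers of $l$, $l-m$ and $m$ appear at each stage and verifying that mixed remainders such as $O(|l|((l-m)^2+m^2))$ or $O(l^3)$ fit into the two stated bounds. Once the sign in the non-self-adjoint integration by parts is pinned down, the rest is a routine expansion in three small parameters.
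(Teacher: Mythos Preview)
Your proposal is correct and follows essentially the same route as the paper: integration by parts together with the Taylor expansion $\widetilde{w}_1(l,x)=1+il\,g_1(x)+\mathcal{O}(l^2)$ from Lemma~\ref{craven32}, so that the factor $(\partial_x\pm il)$ hitting the eigenfunction produces the crucial $\mathcal{O}(|l|)$. For part (b) there is a small presentational difference: the paper tests the expression against a generic $\widetilde{v}(l,\cdot)$ in the range of $P_s$, whereas you compute $s_{11}^v$ directly as the full symbol minus $s_{11}^1(l,l-m,m)\,\widetilde{w}_1(l,x)$ and invoke Lemma~\ref{lem33} to absorb the subtracted piece; the underlying expansion $(\partial_x+il)[\widetilde{w}_1(l-m,x)\widetilde{w}_1(m,x)]=il(1+\partial_x g_1(x))+\mathcal{O}((l-m)^2+m^2)$ is identical in both arguments.
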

\noindent
{\bf Proof.}
a) Using again the expansion \eqref{gmexpand} yields after some integration by parts that
\begin{eqnarray*} 
&  & \int_{0}^{2 \pi} \overline{\widetilde{w}_1(l,x)} (\partial_x+ il)(c(x)(\partial_x+il)
 \int_{-1/2}^{1/2}
\widetilde{v}(l-m,x,t) \widetilde{v}(m,x,t)dm dx 
\\& = &\int_{-1/2}^{1/2} \int_{0}^{2 \pi}  c(x)  (-il  + il \partial_x g_1(x) + \mathcal{O}(l^2) ) 
(\partial_x+il)(\widetilde{v}(l-m,x,t) \widetilde{v}(m,x,t)) dx dm
\\& = &   \mathcal{O}(l)
\end{eqnarray*}
b) As above we obtain
\begin{eqnarray*} 
&& s_{11}^v(l,l-m,m) 
\\& = & \int_{0}^{2 \pi}\overline{\widetilde{v}(l,x)} (\partial_x+ il)(c(x)(\partial_x+il)
( \widetilde{w}_1(l-m,x)  \widetilde{w}_1(m,x)) dx
\\& = & \int_{0}^{2 \pi} \overline{\widetilde{v}(l,x)}
(\partial_x+ il)(c(x)(\partial_x+il) \\&& \qquad  \times ((1 + i(l-m) g_1(x) + \mathcal{O}((l-m)^2) )(1 + im g_1(x) + \mathcal{O}(m^2) )) dx 
\\& = &  \int_{0}^{2 \pi} \overline{\widetilde{v}(l,x)} (\partial_x+ il)(c(x)
( il  + i l \partial_x g_1(x) + \mathcal{O}((l-m)^2 + m^2) ) dx 
\\& = & \mathcal{O}(|l| +  (l-m)^2 + m^2).
\end{eqnarray*}
\qed

For the derivation of the Whitham system
we make the ansatz
\begin{eqnarray} \label{blochansatzwhitham}
\widetilde{u}_1(l,t)  =  \varepsilon^{-1}  \widetilde{A}(K,T) 
\qquad \textrm{and} \qquad
\widetilde{v}(l,x,t)  =   \widetilde{B}(K,x,T).
\end{eqnarray}
where $ T = \varepsilon t $, and $ l = \varepsilon K $. 
With $ \widetilde{u}(l,x,t) = \widetilde{u}_1(l,t) \widetilde{w}_1(l,x) +\widetilde{v}(l,x,t) $ we find  that 
\begin{eqnarray*} 
P_c (\textrm{Res}(\widetilde{u}))(l,t) & = & 
- \partial_t^2  \widetilde{u}_1(l,t)   - \lambda_1(l) \widetilde{u}_1(l,t) + P_c(l) N_{l}(\partial_x)(\widetilde{u})(l,t)  
\\
& = & - \varepsilon \partial_T^2 \widetilde{A}(K,T)  -  \varepsilon \lambda_1''(0) K^2 
\widetilde{A}(K,T)  /2 
+ \mathcal{O}(\varepsilon^3)
\\ && \qquad+ P_c(\varepsilon K) N_{\varepsilon K}(\partial_x)(\widetilde{u})( \varepsilon K,T/\varepsilon)  
\end{eqnarray*}
and 
\begin{eqnarray*} 
P_s (\textrm{Res}(\widetilde{u}))(l,x,t) 
& = & 
- \partial_t^2 \widetilde{v}(l,x,t)
- L_l(\partial_x) \widetilde{v}(l,x,t) + P_s(l) N_l(\partial_x)(\widetilde{u})(l,x,t) \\
& = & 
-\eps^2\p_T^2 \widetilde{B}(K,x,T) -\widetilde{L}_{\varepsilon K}(\partial_x)\widetilde{B}(K,x,T) 
\\ && \qquad +P_s(l) N_{\varepsilon K}(\partial_x)(\widetilde{u})(\varepsilon K,x,T/\varepsilon).
\end{eqnarray*}
Since $ P_s(\varepsilon K) N_{\varepsilon K}(\partial_x)(\widetilde{u}) $ is quadratic w.r.t.  $  \widetilde{u} $
and since $ \widetilde{L}_{\varepsilon K} $ is invertible on the range of $ P_s(\varepsilon K) $ we can use the implicit
function theorem to solve 
$$
- \widetilde{L}_{\varepsilon K}(\partial_x)\widetilde{B}(K,x,T) 
+P_s(\varepsilon K) N_{\varepsilon K}(\partial_x)(\widetilde{u})(\varepsilon K,x,T/\varepsilon) = 0 
$$ 
w.r.t. $ \widetilde{B} = H(\widetilde{A})(K,x,T)  $ for sufficiently small $ \widetilde{A} $.
Note that we kept our notation and still wrote $ T/\varepsilon $ in the arguments of $ N $ 
although in fact it only depends on $ T $.
We insert  $ \widetilde{B} = H(\widetilde{A})(K,x,T)  $ into the first equation and obtain 
\begin{eqnarray*} 
P_c (\textrm{Res}(\widetilde{u}))(l,t) & = & 
 - \varepsilon \partial_T^2 \widetilde{A}(K,T)  -  \varepsilon \lambda_1''(0) K^2  
\widetilde{A}(K,T)  /2 + \mathcal{O}(\varepsilon^3) \\ && + P_c(\varepsilon K) N_{\varepsilon K}(\partial_x)( \varepsilon^{-1}  \widetilde{A}(K,T)\widetilde{w}_1(\varepsilon K,x) 
\\ && \qquad + H(\widetilde{A}) (K,x,T)  )(\varepsilon K,T/\varepsilon)  
\end{eqnarray*}
The Whitham system occurs by expanding  
the right hand side w.r.t. $ \varepsilon $
and by equating the coefficient in front of $ \varepsilon^1 $ to zero.
We obtain in a first step 
$$ 
 \partial_T^2 \widetilde{A}(K,T)  +   \lambda_1''(0) K^2 
\widetilde{A}(K,T)  /2 + \widetilde{G}(\widetilde{A})(K,T) = 0 
$$ 
where $  \widetilde{G} $ is a nonlinear function that can be written as 
$$ 
 \widetilde{G}(\widetilde{A})(K,T) = - \chi_{[-\delta_0/4,\delta_0/4 ]}(\varepsilon K)\sum_{j=2}^{\infty} s_j iK \int_{-1/(2\varepsilon)}^{1/(2\varepsilon)}\widetilde{A}^{*(j-1)}(K-M) iM \widetilde{A}(M) dM
$$
with coefficients $ s_j $.
The factor $ iK $ comes from Lemma \ref{lem33} and Lemma \ref{lem34} a), the factor $ iM $ from the fact that due to the reflection symmetry we need an even number of such factors 
and due to the long wave character of the approximation we have exactly two such factors at $ \varepsilon $.   
Replacing via \eqref{crew1} the Bloch transform $ \widetilde{A}(K,T) $ by the Fourier transform $ \widetilde{A}(K,T) $ finally gives Whitham's system
\begin{equation} \label{crew2}
 \partial_T^2 \widehat{A}(K,T)  +   \lambda_1''(0) K^2 
\widehat{A}(K,T)  /2 + \widehat{G}(\widehat{A})(K,T) = 0 
\end{equation}
in Fourier space
where $  \widehat{G} $ is a nonlinear function that can be written as 
$$ 
 \widehat{G}(\widehat{A})(K,T) = - \sum_{j=2}^{\infty} s_j iK \int_{-\infty}^{\infty}\widehat{A}^{*(j-1)}(K-M) iM \widehat{A}(M) dM.
$$
In physical space we have 
$$ 
G(A)(X,T) = -\sum_{j=2}^{\infty} s_j \partial_X ( A^{j-1} \partial_X A) = -\partial_X^2 
 \sum_{j=2}^{\infty} s_j A^j /j  
$$ 
such that Whitham's system finally can be written as 
\begin{equation} \label{crew3}
\partial_T^2 A = \partial_X^2 \mathcal{H}(A) , \qquad \textrm{with} \qquad 
\mathcal{H}(A) =  - \lambda_1''(0) A /2 -  \sum_{j=2}^{\infty} s_j A^j /j  .
\end{equation}

\section{Estimates for the residual}

After the derivation of the  amplitude equations we estimate the so called residual, the terms which do not cancel
after inserting the approximation into   \eqref{Boussinesq}. In order to have estimates as in the spatially homogeneous case 
for the residual terms in terms of $ \varepsilon $ we have to modify our approximations with higher order terms.
\medskip

\noindent
{\bf The improved KdV approximation.}
For the construction of the improved KdV approximation we proceed as for  the 
derivation of the Whitham system.
With    $ \mathbf{E}  = e^{i \varepsilon K c t} $,
$ T = \varepsilon^{3} t $, and $ l = \varepsilon K $
we make the ansatz
\begin{eqnarray*} 
\widetilde{u}_1(l,t)  & = & \varepsilon \widetilde{A}(K,T) \mathbf{E} \\
\widetilde{v}(l,x,t)  & = &   \varepsilon^4  \widetilde{B}(K,x,T)\mathbf{E} +  \varepsilon^5  \widetilde{B}_2(K,x,T)\mathbf{E} +  \varepsilon^3  \widetilde{B}_3(K,x,T)\mathbf{E} .
\end{eqnarray*}
With $ \widetilde{u}(l,x,t) = \widetilde{u}_1(l,t) \widetilde{w}_1(l,x) +\widetilde{v}(l,x,t) $,
$ T = \varepsilon t $, and $ l = \varepsilon K $ we find  that 
\begin{eqnarray*} 
P_c (\textrm{Res}(\widetilde{u}))(l,t) & = & 
- \partial_t^2  \widetilde{u}_1(l,t)   - \lambda_1(l) \widetilde{u}_1(l,t) + P_c(l) N_{l}(\partial_x)(\widetilde{u})(l,t)  
\\ & = &  \varepsilon^3 c^2 K^2 \widetilde{A}(K,T)  \mathbf{E} -  2 \varepsilon^5 i c K (\partial_T \widetilde{A}(K,T) )  \mathbf{E} - \varepsilon^7 (\partial_T^2 \widetilde{A}(K,T) )  \mathbf{E} 
\\&&
-  \varepsilon^3 \lambda_1''(0) K^2 \widetilde{A} (K,T)  \mathbf{E}/2 
 -  \varepsilon^5 \lambda_1''''(0) K^4 \widetilde{A}(K,T)   \mathbf{E}/24 
 + \mathcal{O}( \varepsilon^7)
\\&&
+  \varepsilon^5 \int_{-1/(2  \varepsilon)}^{1/(2  \varepsilon)} \nu_2 K^2 \widetilde{A}(K-M,T)\widetilde{A}(M,T)
dM  \mathbf{E}+ \mathcal{O}( \varepsilon^7) =\mathcal{O}( \varepsilon^7)  
\end{eqnarray*}
if we choose $ c $ and $ \widetilde{A} $ as above. We have $ \mathcal{O}( \varepsilon^7)   $ and not $ \mathcal{O}( \varepsilon^6) $ since   
$ P_c (\textrm{Res}(\widetilde{u}))(l,t) $ does not depend on $ x $ and has to be even w.r.t. factors in $ l $, i.e., $  \varepsilon^5 K^4 \widetilde{A}(K,T)  $ is allowed,
but not  $  \varepsilon^6 K^5 \widetilde{A}(K,T)  $.
Next we have 
\begin{eqnarray*} 
P_s (\textrm{Res}(\widetilde{u}))(l,x,t) 
& = & - \partial_t^2 \widetilde{v}(l,x,t) 
- L_l(\partial_x) \widetilde{v}(l,x,t) + P_s(l) N_l(\partial_x)(\widetilde{u})(l,x,t) \\
& = & 
 c^2 K^2  (\varepsilon^6 \widetilde{B}(K,x,T) + \varepsilon^7 \widetilde{B}_2(K,x,T)  + \varepsilon^{8} \widetilde{B}_3(K,x,T) ) \mathbf{E} 
 \\ 
&& -  2 i c K (\varepsilon^8  \partial_T \widetilde{B}(K,x,T) +\varepsilon^{9}  \partial_T \widetilde{B}_2(K,x,T)+\varepsilon^{10}  \partial_T \widetilde{B}_3(K,x,T))  \mathbf{E}  
\\ &&  -( \varepsilon^{10} \partial_T^2 \widetilde{B}(K,x,T) + \varepsilon^{11} \partial_T^2 \widetilde{B}_2(K,x,T)+  \varepsilon^{12} \partial_T^2 \widetilde{B}_3(K,x,T)  )  \mathbf{E} 
\\ &&  - (\varepsilon^4 \widetilde{L}_{\varepsilon K}(\partial_x)\widetilde{B}(K,x,T)+ \varepsilon^5 \widetilde{L}_{\varepsilon K}(\partial_x)\widetilde{B}_2(K,x,T) 
\\ && + \varepsilon^6 \widetilde{L}_{\varepsilon K}(\partial_x)\widetilde{B}_3(K,x,T)  )  \mathbf{E}+P_s(\varepsilon K) N_{\varepsilon K}(\partial_x)(\widetilde{u})(\varepsilon K,x,T/\varepsilon)
\end{eqnarray*}
where we expand 
$$ 
P_s(l) N_{\varepsilon K}(\partial_x)(\widetilde{u})(\varepsilon K,x,T/\varepsilon) = (\varepsilon^4 F_4(\widetilde{A})+ \varepsilon^5 F_5(\widetilde{A})+ \varepsilon^6 F_6(\widetilde{A}, \widetilde{B})+ \mathcal{O}(\varepsilon^7)) (K,x,T)  \mathbf{E} .
$$ 
If we set 
\begin{eqnarray*} 
0 & = &  -\widetilde{L}_{\varepsilon K}(\partial_x)\widetilde{B}(K,x,T)   + \varepsilon^4 F_4(\widetilde{A})(K,x,T) , \\ 
0 & = &  -\widetilde{L}_{\varepsilon K}(\partial_x)\widetilde{B}_2(K,x,T)+ \varepsilon^4 F_5(\widetilde{A})(K,x,T) , \\
0 & = & -\widetilde{L}_{\varepsilon K}(\partial_x)\widetilde{B}_3(K,x,T) + \varepsilon^4 F_6(\widetilde{A}, \widetilde{B})(K,x,T) + c^2 K^2  \widetilde{B}(K,x,T)  ,\\
\end{eqnarray*}
we finally have 
$$ 
P_s (\textrm{Res}(\widetilde{u}))(l,x,t)  =  \mathcal{O}(\varepsilon^7).
$$
The functions $ \widetilde{B} $, $ \widetilde{B}_2 $, and $ \widetilde{B}_3 $ are well-defined since  $ \widetilde{L}_{\varepsilon K} $ can be inverted on the 
range of $ P_s(\varepsilon K) $.
\medskip

\noindent
{\bf The improved inciscid Burgers  approximation.}
We leave this part to the reader. We refer to Appendix \ref{appA} where the modified approximation is discussed 
for the spatially homogeneous situation.
\medskip

\noindent
{\bf The improved Whitham  approximation.} We need the residual formally to be of order $ \mathcal{O}(\varepsilon^3) $.
With the previous approximation we already have $ \mathcal{O}(\varepsilon^3) $ for the $ P_c $-part of the residual again due to 
symmetry reasons, but we only have  $ \mathcal{O}(\varepsilon^2) $ for the $ P_s $-part. As above we modify our ansatz into 
\begin{eqnarray*} 
\widetilde{u}_1(l,t)  =  \varepsilon^{-1}  \widetilde{A}(K,T) 
\qquad \textrm{and} \qquad
\widetilde{v}(l,x,t)  =   \widetilde{B}(K,x,T) + \varepsilon^{2}  \widetilde{B}_2(K,x,T) .
\end{eqnarray*}
We define $  \widetilde{A} $ and $ \widetilde{B} $ exactly as above and $ \widetilde{B}_2 $ as solution of 
$$ 
-\partial_T^2 \widetilde{B}(K,x,T)- \widetilde{L}_{\varepsilon K}(\partial_x)\widetilde{B}_2(K,x,T)  = 0 
$$
which is again well-defined due the fact that  $ \widetilde{L}_{\varepsilon K} $ can be inverted on the 
range of $ P_s(\varepsilon K) $.
\medskip

For all three approximations we gain a factor $ \varepsilon^{1/2} $ when we estimate the error in $ L^2 $-based spaces due to the scaling properties of the $ L^2 $ norm. 
Since the error made by the various approximations will be estimated in physical space via energy estimates we conclude for the KdV approximation, 
for the inviscid Burgers approximation, and for the Whitham approximation that 
\begin{lemma} \label{lemkdvres}
Let $A  \in C([0,T_0],H^{6})$ 
be  a solution  of the  KdV equation \eqref{main_KdV}. 
 Then there exist $ \varepsilon_0 > 0 $, $ C_{\rm res} $ such that for all $  \varepsilon \in (0,\varepsilon_0) $ we have 
$$ 
\sup_{t \in [0,T_0/\varepsilon^3]} \|    \mathrm{Res}(   \varepsilon^2 \Psi(\cdot,t,\varepsilon)    )    \|_{H^1} 
\leq C_{\rm res}  \varepsilon^{15/2}
$$
and 
$$ 
\sup_{t \in [0,T_0/\varepsilon^3]} \|  \partial_x^{-1}  \mathrm{Res}(   \varepsilon^2 \Psi(\cdot,t,\varepsilon)    )    \|_{H^1} 
\leq C_{\rm res}  \varepsilon^{13/2}.
$$
\end{lemma}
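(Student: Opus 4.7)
The plan is to make rigorous the formal $\mathcal{O}(\varepsilon^7)$ bound for the Bloch-space residual already sketched in the paragraph above the lemma, and then convert it to an $H^1$ bound in physical space via the scaling properties of the inverse Bloch transform. Since the improved approximation $\varepsilon^2 \Psi$ is, by construction, supported in Bloch wave number $l \in [-\varepsilon \delta_0/4, \varepsilon \delta_0/4]$, this conversion will provide an extra factor of $\varepsilon^{1/2}$.

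First I would verify that $\widetilde{B}$, $\widetilde{B}_2$, $\widetilde{B}_3$ are well defined: the operator $\widetilde{L}_{\varepsilon K}(\partial_x)$ is invertible on the range of $P_s(\varepsilon K)$ uniformly for $|\varepsilon K| \leq \delta_0$ since $\lambda_1(l)$ is separated from the rest of the spectrum on this interval. Each of the three equations for the $\widetilde{B}_j$ then has a unique solution, smooth in $x$ and $K$, whose $K$-regularity is inherited from $\widetilde{A}(K,T)$ and hence is controlled by the $H^6$-norm of $A$. Next I would confirm the order-by-order cancellations from the preceding paragraph: the choice $c^2 = \lambda_1''(0)/2$ eliminates the $\mathcal{O}(\varepsilon^3)$ term of $P_c \mathrm{Res}$; the KdV equation \eqref{main_KdV} eliminates the $\mathcal{O}(\varepsilon^5)$ term; the symmetry assumption (SYM) forces the nominal $\mathcal{O}(\varepsilon^6)$ contribution to vanish because $P_c \mathrm{Res}$ must be even in $l$ by Lemmas \ref{craven32} and \ref{lem33}; and the three $\widetilde{B}_j$-equations eliminate successively the $\mathcal{O}(\varepsilon^4)$, $\mathcal{O}(\varepsilon^5)$, and $\mathcal{O}(\varepsilon^6)$ contributions to $P_s \mathrm{Res}$. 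The outcome is a pointwise-in-$(l,x)$ estimate $|\widetilde{\mathrm{Res}}(l,x,t)| \leq C \varepsilon^7$, uniformly for $t \in [0,T_0/\varepsilon^3]$ and $x \in \mathbb{R}$.

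To pass to physical space I would apply the inverse Bloch transform. Writing $l = \varepsilon K$, the residual is supported in $K \in [-\delta_0/4, \delta_0/4]$ and is $\mathcal{O}(\varepsilon^7)$ with a bounded $K$-profile governed by $\widehat{A} \in L^2_6$. The change of variables $dl = \varepsilon\, dK$ in the $L^2(dl)$-norm on the Bloch side produces the additional factor $\varepsilon^{1/2}$, so that $\|\mathrm{Res}(\varepsilon^2 \Psi)\|_{L^2} \leq C \varepsilon^{15/2}$. Since each $\partial_x$ acts in Bloch space as $\partial_x + il$ with $l = \mathcal{O}(\varepsilon)$ and the periodic coefficients are smooth, $\|\mathrm{Res}(\varepsilon^2 \Psi)\|_{H^1} \leq C \varepsilon^{15/2}$ with the same constant up to a harmless factor. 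For the second bound, $\partial_x^{-1}$ acts as multiplication by $(i \varepsilon K)^{-1}$ on the support of the residual; this costs exactly one power of $\varepsilon^{-1}$ and yields $\|\partial_x^{-1} \mathrm{Res}(\varepsilon^2 \Psi)\|_{H^1} \leq C \varepsilon^{13/2}$.

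The main obstacle is the careful bookkeeping of the symmetry-induced cancellation of the apparent $\mathcal{O}(\varepsilon^6)$ term in $P_c \mathrm{Res}$: one must verify that every term in the expansion of $P_c(\varepsilon K) N_{\varepsilon K}(\partial_x)(\widetilde{u})$ that is odd in $l$ really vanishes under (SYM), using the parity properties of the $g_j$ from Lemma \ref{craven32} together with the refined asymptotic remark at the end of the proof of Lemma \ref{lem33}. Once this is in place, the remaining steps are a routine but notationally heavy substitution and scaling exercise.
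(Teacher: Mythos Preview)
Your proposal follows the same route as the paper: the construction of the improved approximation with $\widetilde{B},\widetilde{B}_2,\widetilde{B}_3$, the order-by-order cancellation yielding a formal $\mathcal{O}(\varepsilon^7)$ residual in Bloch space, and the gain of $\varepsilon^{1/2}$ from the long-wave scaling when passing to $L^2$-based norms. The paper in fact gives no further detail than what precedes the lemma, so your write-up is, if anything, more explicit than the original.

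One small imprecision worth tightening: the claim that ``$\partial_x^{-1}$ acts as multiplication by $(i\varepsilon K)^{-1}$ on the support of the residual'' is only accurate for the $P_c$-component, where $\widetilde{w}_1(l,x)=1+\mathcal{O}(l)$ so that the Fourier content sits at $k=l=\varepsilon K$. On the $P_s$-component the $x$-profile has genuine cell oscillations (Fourier content at $k=l+j$ with $j\neq 0$), and there $\partial_x^{-1}$ is bounded uniformly in $\varepsilon$ rather than costing $\varepsilon^{-1}$. This only improves the estimate for the $P_s$-part, so your final bound $\varepsilon^{13/2}$ is unaffected, but the argument as written conflates the two components.
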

\begin{lemma}
Let $ \alpha = 1 $ and let $A  \in C([0,T_0],H^{4})$ 
be  a solution  of the  inviscid Burgers  equation \eqref{constkdv1}. 
 Then there exist $ \varepsilon_0 > 0 $, $ C_{\rm res} $ such that for all $  \varepsilon \in (0,\varepsilon_0) $ we have 
$$ 
\sup_{t \in [0,T_0/\varepsilon^{1+\alpha}]} \|    \mathrm{Res}(  \varepsilon^{\alpha}   \Psi(\cdot,t,\varepsilon)    )    \|_{H^1} 
\leq C_{\rm res}  \varepsilon^{(7+4\alpha)/2}
$$
and 
$$ 
\sup_{t \in [0,T_0/\varepsilon^{1+\alpha}]} \|  \partial_x^{-1}  \mathrm{Res}( \varepsilon^{\alpha}    \Psi(\cdot,t,\varepsilon)    )    \|_{H^1} 
\leq C_{\rm res}  \varepsilon^{(5+4\alpha)/2}.
$$
\end{lemma}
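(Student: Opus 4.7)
The plan is to mimic exactly the construction of the improved KdV approximation carried out just above. For $\alpha=1$ we refine the leading-order Burgers ansatz \eqref{blochansatz} by adding higher-order corrections to both the central mode $\widetilde{u}_1$ and the stable part $\widetilde{v}$ of the Bloch expansion, writing
\begin{align*}
\widetilde{u}_1(l,t) & =  \widetilde{A}(K,T)\mathbf{E} + \sum_{j=1}^{N_1}\varepsilon^{j}\widetilde{A}_j(K,T)\mathbf{E},\\
\widetilde{v}(l,x,t) & = \sum_{j=2}^{N_2}\varepsilon^{j}\widetilde{B}_j(K,x,T)\mathbf{E},
\end{align*}
with $\mathbf{E}=e^{i\varepsilon K c t}$, $l=\varepsilon K$, $T=\varepsilon^{2}t$, and $c$ fixed by the derivation of \eqref{main_Burgers}. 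The integers $N_1,N_2$ are chosen large enough that the Bloch-space residual becomes formally $\mathcal{O}(\varepsilon^{N})$ with $N\geq 6$, which will turn out to be exactly what is needed.

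Expanding $P_c(\textrm{Res})$ and $P_s(\textrm{Res})$ in powers of $\varepsilon$ and equating coefficients order by order, the leading $\mathcal{O}(\varepsilon^{2})$ and $\mathcal{O}(\varepsilon^{3})$ coefficients of $P_c(\textrm{Res})$ vanish by the choice of $c$ and by requiring $A$ to satisfy \eqref{main_Burgers}; higher orders are killed by successively choosing $\widetilde{A}_j$ to solve linear transport-type equations driven by known lower-order data, which is possible because the principal symbol $-2icK\partial_T$ is invertible on the relevant scale. For $P_s(\textrm{Res})$ each equation for $\widetilde{B}_j$ is purely algebraic and is solved by inverting $\widetilde{L}_{\varepsilon K}$ on the range of $P_s(\varepsilon K)$. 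This is exactly the same machinery deployed for the improved KdV approximation above, and in the spatially homogeneous setting in Appendix \ref{appA}.

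Once the Bloch residual is formally of sufficiently high order in $\varepsilon$, we convert to $H^{1}$ in physical space. The change of variable $l=\varepsilon K$ contributes an $\varepsilon^{-1/2}$ factor from the $L^{2}$-scaling of the Bloch transform, giving the bound $\|\textrm{Res}\|_{H^{1}}\leq C_{\rm res}\varepsilon^{11/2}=C_{\rm res}\varepsilon^{(7+4\alpha)/2}$. The sharper $\partial_x^{-1}$-estimate gains one additional $\varepsilon^{-1}$ because the relevant kernels $s_{11}^{1}$, $s_{11}^{v}$, and $s_{vv}^{1}$ all carry an explicit factor of $l$ by Lemma \ref{lem33} and Lemma \ref{lem34}, so that the Bloch symbol $1/(il)$ of $\partial_x^{-1}$ acts on the residual without producing a singularity at $l=0$.

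The main obstacle is bookkeeping rather than analysis. Since the inviscid Burgers equation is non-dispersive, no smoothing is available and each new correction $\widetilde{A}_j$ costs one more spatial derivative of $A$; one must verify that the hypothesis $A\in C([0,T_0],H^{4})$ is precisely the regularity needed to close all the order-by-order constructions and to control the formally leading remainder in physical space. Once $N_1,N_2$ are pinned down this way, the required bounds follow uniformly for $t\in[0,T_0/\varepsilon^{2}]$ from the continuous dependence of the constructed correction terms on $A$.
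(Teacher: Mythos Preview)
Your proposal is correct and follows exactly the route the paper intends: the paper itself does not prove this lemma but writes ``We leave this part to the reader. We refer to Appendix \ref{appA} where the modified approximation is discussed for the spatially homogeneous situation,'' and your sketch is precisely the combination of the improved KdV construction (for the periodic $P_s$-part) with the Appendix~\ref{appA} mechanism of adding higher-order central-mode corrections $\widetilde{A}_j$ (needed here because, unlike in the KdV case, the $P_c$-residual is not automatically small enough).

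One small imprecision worth flagging: the operator $-2icK\partial_T$ is of course \emph{not} invertible at $K=0$, so that is not the reason the equations for the $\widetilde{A}_j$ can be solved. What actually happens---as in Appendix~\ref{appA}---is that the inhomogeneities at each order inherit an overall factor of $iK$ from the kernel expansion of Lemma~\ref{lem33} (and from the fact that $\lambda_1(l)$ and the temporal terms can all be written as $K$ times something via the Burgers equation), so the $K$ cancels and one is left with genuine transport equations $2c\,\partial_T\widetilde{A}_j=\ldots$ that are solved forward in $T$. This is also what makes the $\partial_x^{-1}$-estimate work, as you correctly note.
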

\begin{lemma} \label{lemkdvres1}
Let $A  \in C([0,T_0],H^{4})$ 
be  a solution  of the  Whitham equation \eqref{constwhit1}. 
 Then there exist $ \varepsilon_0 > 0 $, $ C_{\rm res} $ such that for all $  \varepsilon \in (0,\varepsilon_0) $ we have 
$$ 
\sup_{t \in [0,T_0/\varepsilon]} \|    \mathrm{Res}(   \Psi(\cdot,t,\varepsilon)    )    \|_{H^1} 
\leq C_{\rm res}  \varepsilon^{7/2}
$$
and 
$$ 
\sup_{t \in [0,T_0/\varepsilon]} \|  \partial_x^{-1}  \mathrm{Res}(   \Psi(\cdot,t,\varepsilon)    )    \|_{H^1} 
\leq C_{\rm res}  \varepsilon^{5/2}.
$$
\end{lemma}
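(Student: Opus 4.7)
My plan is to track the $\varepsilon$-order of $\mathrm{Res}(\Psi)$ in Bloch space via the construction of the improved Whitham approximation outlined just above the statement, and then translate to physical space estimates using the fact that the Bloch support of $\widetilde{A}$ (and hence of $\widetilde{B}$, $\widetilde{B}_2$) is contained in $|l|\leq \varepsilon \delta_0/4$. The architecture of the proof is the same as that of Lemma \ref{lemkdvres} for the KdV case: split $\widetilde{\mathrm{Res}}=P_c\widetilde{\mathrm{Res}}+P_s\widetilde{\mathrm{Res}}$, bound each part pointwise in Bloch space, and use the isometry of the Bloch transform together with the compact $l$-support to convert a pointwise $\mathcal{O}(\varepsilon^{\beta})$ Bloch bound into an $\mathcal{O}(\varepsilon^{\beta+1/2})$ $L^2$ bound.

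The key step is to verify that the construction in the paragraph on the improved Whitham approximation really makes both $P_c\widetilde{\mathrm{Res}}$ and $P_s\widetilde{\mathrm{Res}}$ formally of size $\mathcal{O}(\varepsilon^3)$ in Bloch space. For the $P_c$ part, substituting the ansatz $\widetilde{u}_1=\varepsilon^{-1}\widetilde{A}$, $\widetilde{v}=\widetilde{B}+\varepsilon^2 \widetilde{B}_2$ into $P_c N_l$ and expanding $\widetilde{w}_1(l,\cdot)$ and $\lambda_1(l)$ in powers of $l=\varepsilon K$, the $\mathcal{O}(\varepsilon)$ terms cancel after choosing $\widetilde{A}$ to satisfy the Whitham system \eqref{crew2}; the symmetry hypothesis \textbf{(SYM)} promotes the residue from $\mathcal{O}(\varepsilon^2)$ to $\mathcal{O}(\varepsilon^3)$ because the expansion of $s_{11}^1$, $s_{vv}^1$ and $\lambda_1$ carries only even powers of $l$ when combined with the two $il$-type factors from Lemma \ref{lem33} and Lemma \ref{lem34}(a). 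For the $P_s$ part, the ansatz is chosen so that $\widetilde{B}$ eliminates the $\mathcal{O}(\varepsilon^0)$ contribution of $P_s N_l$ by the implicit function theorem (invertibility of $\widetilde{L}_{\varepsilon K}$ on the range of $P_s$), and $\widetilde{B}_2$ is defined precisely to absorb $-\partial_T^2\widetilde{B}$, so that the remainder is $\mathcal{O}(\varepsilon^3)$.

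Once the pointwise Bloch bounds are in place, I would translate to physical space. The residual in Bloch space is a function $\widetilde{R}(l,x,t)$ supported in $l\in[-\varepsilon\delta_0/4,\varepsilon\delta_0/4]$ with pointwise bound $\mathcal{O}(\varepsilon^3)$ and with smooth dependence on $x$ bounded in terms of $\|A(\cdot,T)\|_{H^4}$ (each derivative $\partial_T$ is converted to two $\partial_X$'s via the Whitham equation $\partial_T^2 A=\partial_X^2\mathcal{H}(A)$, costing at most four $\partial_X$-derivatives of $A$). Using the Bloch isometry $\|u\|_{L^2(\mathbb{R})}\asymp\|\widetilde{u}\|_{L^2([-1/2,1/2];L^2(\mathbb{T}))}$ and the compact support of width $\varepsilon$, one gains the stated $\varepsilon^{1/2}$ and arrives at $\|\mathrm{Res}(\Psi)\|_{L^2}\leq C\varepsilon^{7/2}$. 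The $H^1$ bound is identical up to a constant, because a physical-space derivative corresponds in Bloch space to multiplication by $il+\partial_x$, and $|l|\leq\varepsilon$ on the support so this derivative is at worst $\mathcal{O}(1)$ in $x$-regularity.

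For the $\partial_x^{-1}$ estimate, I would exploit the fact that every term of \eqref{Boussinesq} carries at least one explicit $\partial_x$ (the right-hand side) and that $\partial_t^2$ acting on the approximation $\Psi$ can be rewritten as a spatial derivative via the Whitham equation, exactly as in the KdV case. Hence $\widetilde{\mathrm{Res}}$ factors as $il$ times a function of the same Bloch size, and $\partial_x^{-1}$ in Bloch space is division by $il$; on the support $|l|\sim\varepsilon$ this costs one factor $\varepsilon^{-1}$, yielding $\|\partial_x^{-1}\mathrm{Res}(\Psi)\|_{H^1}\leq C\varepsilon^{5/2}$. The main obstacle I anticipate is the bookkeeping in the third paragraph above: confirming that no hidden terms of order $\varepsilon^2$ survive in $P_c\widetilde{\mathrm{Res}}$ and $P_s\widetilde{\mathrm{Res}}$, which relies crucially on the reflection symmetry produced by \textbf{(SYM)} and on the fact that nonlinear interactions in $N_l$ always come with an even total number of $il$-type factors; everything else reduces to routine Bloch-transform estimates, with the $H^4$-hypothesis on $A$ consumed by the at most four $X$-derivatives that appear in the formal expansion of the residual.
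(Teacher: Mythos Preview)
Your proposal is correct and follows essentially the same route as the paper: construct the improved approximation so that both $P_c\widetilde{\mathrm{Res}}$ and $P_s\widetilde{\mathrm{Res}}$ are formally $\mathcal{O}(\varepsilon^3)$ (using {\bf (SYM)} for the $P_c$ part and the invertibility of $\widetilde{L}_{\varepsilon K}$ on $\mathrm{ran}\,P_s$ for the $P_s$ part), then gain $\varepsilon^{1/2}$ from the long-wave $L^2$ scaling, and lose $\varepsilon^{-1}$ for $\partial_x^{-1}$. One minor caveat: in Bloch variables $\partial_x$ acts as $(\partial_x+il)$ rather than $il$, so the $\partial_x^{-1}$ step is more cleanly justified in physical space by writing $\mathrm{Res}(\Psi)=\partial_x(\cdot)$ (using that the right-hand side of \eqref{Boussinesq} is a total $x$-derivative and $\partial_T^2 A=\partial_X^2\mathcal{H}(A)$), exactly as in the spatially homogeneous case.
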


\section{The error estimates}
\label{sec9}

As for spatially homogeneous case the proofs given for the KdV approximations transfer more or less line for line into proofs for the justification of the inviscid Burgers equation and of the Whitham system. 
Our  approximation results  are as follows
\begin{theorem}\label{main_theorem}
Let  $A\in C([0,T_0],H^{6}(\mathbb R)) $  
be a solution of the  KdV equation  \eqref{main_KdV}.
Then there exist $\varepsilon_0>0$, $C>0$ such that for all $\varepsilon \in (0,\varepsilon_0)$ we have  
solutions $ u \in C([0,T_0/\varepsilon^3],H^2)$ of the spatially periodic Boussinesq model \eqref{Boussinesq}
with
\begin{align*}
\sup_{t\in [0,T_0/\varepsilon^3]}\|u(\cdot,t)-\varepsilon^2 A(\varepsilon(\cdot-t),\varepsilon^3t)\|_{H^{2}}\leq C\varepsilon^{5/2}.
\end{align*}
\end{theorem}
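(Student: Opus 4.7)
\medskip

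\noindent
\textbf{Proof proposal.}
The plan is to follow the energy method from Section \ref{sec2}, now adapted to the $x$-dependent coefficients $a,b,c$. First, I would work with the improved KdV approximation $\varepsilon^2 \Psi$ constructed in Section 4, for which Lemma \ref{lemkdvres} supplies residual bounds of order $\varepsilon^{15/2}$ in $H^1$ and $\varepsilon^{13/2}$ in $H^1$ after application of $\partial_x^{-1}$. Since the improved approximation already contains the Bloch-eigenfunction corrections $i\varepsilon K g_1(x) \widetilde A +\ldots$ coming from the expansion $\widetilde w_1(l,x)=1+ilg_1(x)+\mathcal O(l^2)$, a direct computation shows $\|\varepsilon^2\Psi - \varepsilon^2 A(\varepsilon(\cdot-t),\varepsilon^3 t)\|_{H^2}\leq C\varepsilon^{5/2}$. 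Thus it suffices to prove $\|u-\varepsilon^2\Psi\|_{H^2}\le C\varepsilon^{7/2}$, and I would write $u=\varepsilon^2\Psi+\varepsilon^{7/2}R$. The error equation for $R$ then reads
\begin{equation*}
\partial_t^2 R = \partial_x(a\partial_x R)-\partial_x^2(b\partial_x^2 R)+2\varepsilon^2\partial_x(c\partial_x(\Psi R))+\varepsilon^{7/2}\partial_x(c\partial_x(R^2))+\varepsilon^{-7/2}\mathrm{Res}(\varepsilon^2\Psi).
\end{equation*}

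Next, I would introduce the natural energy adapted to the periodic linear operator, namely
\begin{equation*}
E_0=\int\!\bigl[(\partial_t R)^2+R^2+a(x)(\partial_x R)^2+b(x)(\partial_x^2 R)^2\bigr]dx,
\end{equation*}
which is equivalent to $\|\partial_t R\|_{L^2}^2+\|R\|_{H^2}^2$ thanks to $\inf a>0$ and $\inf b>0$. Multiplying the error equation by $\partial_t R$ and integrating by parts, the quadratic part of $L_0(\partial_x)$ disappears from $dE_0/dt$; only the nonlinear and residual terms remain. The cubic term and the residual are controlled exactly as in Section \ref{sec2}: one obtains contributions of order $\varepsilon^{7/2}E^{3/2}$ (cubic) and $\varepsilon^{-7/2}\cdot\varepsilon^{15/2}E^{1/2}=\varepsilon^4 E^{1/2}$ (residual), both harmless on the time scale $t\le T_0/\varepsilon^{-3}$.

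The main obstacle is the mixed term $2\varepsilon^2\int\partial_tR\,\partial_x(c\partial_x(\Psi R))\,dx$. Because $\Psi$ is of order one in $L^\infty$, direct Cauchy--Schwarz gives only $\mathcal O(\varepsilon^2 E)$, which through Gronwall would blow up over $t=\mathcal O(\varepsilon^{-3})$. This is precisely where the spatially periodic case differs from the homogeneous one, since in the latter the multiplier $\partial_t\partial_x^{-2}R$ trivially reduces everything to conserved quadratic forms. In the $x$-dependent setting this reduction breaks down. The remedy alluded to in the introduction is to \emph{incorporate a normal-form transformation directly into the energy}: I would add to $E_0$ quadratic correctors of the form $-\varepsilon^2\int c\Psi(\partial_x R)^2\,dx$ and analogous terms, chosen so that their time derivatives produce exactly the unwanted $\mathcal O(\varepsilon^2)$ contributions with the opposite sign, leaving only remainders of size $\mathcal O(\varepsilon^3 E)$ (the $\varepsilon$ coming from $\partial_t\Psi=\mathcal O(\varepsilon)$ and $\partial_x\Psi=\mathcal O(\varepsilon)$). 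Identifying the full family of such correctors, in particular handling the pieces where the coefficient $\partial_x(c\Psi)$ is not itself small in $\varepsilon$, will likely require splitting $R=P_cR+P_sR$ in Bloch space and using that $L_l(\partial_x)$ is bounded away from zero on the range of $P_s$, so that the non-resonant part $P_sR$ can be expressed algebraically in terms of the long-wave part plus $\varepsilon$-small remainders.

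Once a modified energy $E$ equivalent to $E_0$ (for $\varepsilon$ small) and satisfying
\begin{equation*}
\frac{dE}{dt}\le C\varepsilon^3 E+C\varepsilon^{7/2}E^{3/2}+C\varepsilon^4 E^{1/2}
\end{equation*}
has been constructed, a standard Gronwall/bootstrap argument on $[0,T_0/\varepsilon^3]$ yields $\sup_{t\in[0,T_0/\varepsilon^3]}E(t)=\mathcal O(1)$, hence $\|R\|_{H^2}\le C$ and $\|u-\varepsilon^2\Psi\|_{H^2}\le C\varepsilon^{7/2}$. Combined with the triangle inequality and $\|\varepsilon^2\Psi-\varepsilon^2 A(\varepsilon(\cdot-t),\varepsilon^3 t)\|_{H^2}\le C\varepsilon^{5/2}$, this gives the claimed $\mathcal O(\varepsilon^{5/2})$ estimate. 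Local existence in $H^2$ for the semilinear Boussinesq model \eqref{Boussinesq} and a continuity argument in $\varepsilon$ ensure that solutions indeed exist on the full interval $[0,T_0/\varepsilon^3]$.
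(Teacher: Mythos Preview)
Your outline is close in spirit to the paper's proof, but you overcomplicate at exactly the point where the paper is cleanest, and you leave a genuine gap in the $L^2$ control.

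\textbf{The corrector you propose is already enough; no Bloch splitting is needed.}  The paper never passes to Bloch space in the error estimates.  Instead it absorbs the dangerous $\varepsilon^{2}$ term \emph{into the linear operator}: writing
\[
\partial_x(a\partial_x R)-\partial_x^2(b\partial_x^2 R)+2\varepsilon^{2}\partial_x(c\partial_x(\Psi R))
=\partial_x(\mathcal{A}^2\partial_x R)+2\partial_x\bigl(c(\partial_x\varepsilon^{2}\Psi)R\bigr),
\]
with the self-adjoint, positive operator $\mathcal{A}^2=(a+2c\varepsilon^{2}\Psi)-\partial_x(b\,\partial_x\cdot)$.  The energy then becomes $\tfrac12\|\partial_t R\|_{L^2}^2+\tfrac12\|\mathcal A\partial_x R\|_{L^2}^2$, which is precisely your $E_0$ with the corrector $+2\varepsilon^{2}\!\int c\Psi(\partial_x R)^2\,dx$ built in (note the sign: it is $+$, not $-$).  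Differentiating in time produces only the commutator $[\partial_t,\mathcal A]$, and since $\partial_t(\mathcal A^2)=2c\,\partial_t(\varepsilon^2\Psi)=\mathcal O(\varepsilon^3)$ a short self-adjointness argument shows this commutator contributes $\mathcal O(\varepsilon^3)\|\partial_x R\|_{L^2}^2$.  The leftover term $2\partial_x(c(\partial_x\varepsilon^{2}\Psi)R)$ is already $\mathcal O(\varepsilon^3)$ because $\partial_x(\varepsilon^2\Psi)=\mathcal O(\varepsilon^3)$ by the long-wave structure of the improved approximation (the fast-$x$ dependence in $\widetilde w_1$ and in the $\widetilde v$-correctors enters only at higher order in $\varepsilon$).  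Your worry about $\partial_x(c\Psi)$ not being small is misplaced: that combination never appears---only $c\,\partial_x\Psi$ and $c\,\partial_t\Psi$ survive after integration by parts.  So the ``full family of correctors'' and the $P_c/P_s$ splitting are unnecessary detours.

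\textbf{The $L^2$ control of $R$ is missing.}  Multiplying by $\partial_t R$ alone gives no evolution for $\int R^2\,dx$; your $E_0$ contains $\|R\|_{L^2}^2$ but you have not said how it is propagated.  The paper handles this with a \emph{second} multiplier: apply $\partial_x^{-1}$ to the error equation and test against $\mathcal A^{-2}\partial_x^{-1}\partial_t R$.  This yields $\tfrac12\partial_t\|\mathcal A^{-1}\partial_x^{-1}\partial_t R\|_{L^2}^2+\tfrac12\partial_t\|R\|_{L^2}^2$ on the left, again with only $[\partial_t,\mathcal A^{-1}]$ commutators (handled via $\partial_t(\mathcal A^{-2})=-\mathcal A^{-2}(\partial_t\mathcal A^2)\mathcal A^{-2}=\mathcal O(\varepsilon^3)$) and $\mathcal O(\varepsilon^3)$ remainders on the right.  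Here is where the $\partial_x^{-1}\mathrm{Res}$ bound of order $\varepsilon^{13/2}$ from Lemma~\ref{lemkdvres} is used, giving the $\varepsilon^3 E^{1/2}$ residual contribution in the final Gronwall inequality (your $\varepsilon^4 E^{1/2}$ is correct for the first multiplier but not the governing one).

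With these two fixes your argument becomes the paper's, and the Gronwall/bootstrap conclusion goes through exactly as you describe.
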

\begin{theorem}\label{main_theoremburgers}
Let $ \alpha = 1 $ and let  $A\in C([0,T_0],H^{4}(\mathbb R)) $  
be a solution of the  inviscid Burgers  equation  \eqref{main_KdV}.
Then there exist $\varepsilon_0>0$, $C>0$ such that for all $\varepsilon \in (0,\varepsilon_0)$ we have  
solutions $ u \in C([0,T_0/\varepsilon^3],H^2)$ of the spatially periodic Boussinesq model \eqref{Boussinesq}
with
\begin{align*}
\sup_{t\in [0,T_0/\varepsilon^{1+\alpha}]}\|u(\cdot,t)-\varepsilon^{\alpha} A(\varepsilon(\cdot-t),\varepsilon^{1+\alpha}t)\|_{H^{2}}\leq C\varepsilon^{(1+2\alpha)/2}.
\end{align*}
\end{theorem}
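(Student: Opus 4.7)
The plan is to adapt the proof of the spatially-periodic KdV result Theorem \ref{main_theorem} to the Burgers scaling, exactly as in the spatially-homogeneous case the proof of Theorem \ref{th12} follows line for line from that of Theorem \ref{th11} (cf.\ Section \ref{sec2}). Concretely, I would set
$$ u(x,t) = \varepsilon\,\Psi(x,t,\varepsilon) + \varepsilon^{3/2} R(x,t), $$
with $\Psi$ the improved inviscid Burgers approximation alluded to at the end of Section 4. Substituting into \eqref{Boussinesq} and dividing by $\varepsilon^{3/2}$ yields an evolution equation of the schematic form
\begin{align*}
\partial_t^2 R &= \partial_x(a(x)\partial_x R) - \partial_x^2(b(x)\partial_x^2 R) \\
&\quad + 2\varepsilon\,\partial_x(c(x)\partial_x(\Psi R)) + \varepsilon^{3/2}\partial_x(c(x)\partial_x(R^2)) + \varepsilon^{-3/2}\mathrm{Res}(\varepsilon\Psi),
\end{align*}
in which, by Lemma 4.2 with $\alpha=1$, the rescaled residual is $\mathcal{O}(\varepsilon^3)$ in $H^1$ after one application of $\partial_x^{-1}$.

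The next step is to pass to Bloch space and to split $\widetilde{R}(l,x,t) = \widetilde{R}_1(l,t)\widetilde{w}_1(l,x) + \widetilde{R}_v(l,x,t)$ as in Section 3, so that $\widetilde{R}_1$ captures the long-wave modes lying near the vanishing eigenvalue branch $\lambda_1(l)$ and $\widetilde{R}_v$ lives in the spectrally separated complement. The heart of the argument is the construction of an energy $E$ in Bloch space which (i) is equivalent to $\|R\|_{H^2}^2$, (ii) on the critical mode uses the weight $\lambda_1(l)^{-1}$ in front of $|\partial_t \widetilde{R}_1|^2$, mimicking the physical-space multiplier $-\partial_t\partial_x^{-2} R$ used in Section \ref{sec2}, and (iii) includes bilinear correction terms of order $\varepsilon$ in $(\widetilde{A},\widetilde{R}_1,\widetilde{R}_v)$ designed so that the potentially dangerous resonant interactions encoded in the coupling kernels $s^1_{11}, s^1_{v1}, s^v_{11}, s^v_{vv}$ (cf.\ Lemmas \ref{lem33} and \ref{lem34}) appear in $dE/dt$ as exact time derivatives rather than as source terms. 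This is the physical-space incarnation of the normal-form transformations used in the approach of \cite{CS11,CCPS12}; thanks to Lemmas \ref{lem33}, \ref{lem34} and the symmetry assumption \textbf{(SYM)}, the relevant kernels carry compensating factors of $il$, $i(l-m)$, $im$ that make these correction terms well defined.

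Once $E$ is in place, differentiating along solutions of the error equation and controlling the nonlinear and residual contributions by Cauchy-Schwarz gives a differential inequality
$$\frac{dE}{dt} \;\le\; C\varepsilon^2 E + C\varepsilon^{5/2} E^{3/2} + C\varepsilon^2 E^{1/2},$$
structurally identical to \eqref{smitseq4} with $\alpha=1$. The bootstrap and Gronwall argument at the end of Section \ref{sec2} then yields $\sup_{t\in[0,T_0/\varepsilon^2]} E(t) = \mathcal{O}(1)$ for $\varepsilon$ small enough, and the $H^2$-equivalence of $E^{1/2}$ with $\|R\|_{H^2}$ gives the required $\mathcal{O}(\varepsilon^{3/2})$ bound. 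Local existence and uniqueness of $u\in C([0,T_0/\varepsilon^2],H^2)$ follows from the semilinear character of \eqref{Boussinesq}, exactly as in the final remark of Section \ref{sec2}. The principal obstacle is inherited from the KdV proof rather than specific to the Burgers scaling: designing the quadratic corrections in $E$ so that the low-order resonant couplings between $\widetilde{R}_1$ and the \emph{infinite-dimensional} stable block $\widetilde{R}_v$ are absorbed into exact time derivatives - in contrast to the finite-dimensional FPU situation of \cite{CS11,CCPS12}. Once this has been carried out for the KdV case, the change of scaling from $\alpha=2$ to $\alpha=1$ amounts essentially to rereading the exponents in the Gronwall step.
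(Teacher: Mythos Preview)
Your overall strategy is sound and would likely work, but it is \emph{not} the route the paper takes, and in fact the paper's approach is considerably more direct. You propose to pass to Bloch space, split the error into the critical mode $\widetilde{R}_1$ and the stable complement $\widetilde{R}_v$, and then build quadratic correction terms into the energy so that the resonant couplings become exact time derivatives --- essentially the normal-form machinery of \cite{CS11,CCPS12} recast as energy corrections. The paper explicitly avoids all of this. Instead it works entirely in physical space: the observation is that the dangerous $\mathcal{O}(\varepsilon^\alpha)$ term $2\varepsilon^\alpha\partial_x(c\partial_x(\Psi R))$ can be absorbed into the linear part by writing
\[
\partial_x(a\partial_x R) - \partial_x^2(b\partial_x^2 R) + 2\varepsilon^\alpha\partial_x(c\Psi\partial_x R) = \partial_x(\mathcal{A}^2\partial_x R),
\]
where $\mathcal{A}$ is the positive square root of the self-adjoint operator $B = (a + 2c\varepsilon^\alpha\Psi) - \partial_x(b\partial_x\cdot)$. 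The energy is then simply
\[
E = \tfrac{1}{2}\bigl(\|\partial_t R\|_{L^2}^2 + \|\mathcal{A}^{-1}\partial_x^{-1}\partial_t R\|_{L^2}^2 + \|R\|_{L^2}^2 + \|\mathcal{A}\partial_x R\|_{L^2}^2\bigr),
\]
with no correction terms whatsoever; the only price is that $\mathcal{A}$ is time-dependent through $\Psi$, and the commutators $[\partial_t,\mathcal{A}]$, $[\partial_t,\mathcal{A}^{-1}]$ are controlled via the identity $\partial_t(\mathcal{A}^2) = 2c\,\partial_t(\varepsilon^\alpha\Psi) = \mathcal{O}(\varepsilon^{1+\alpha})$. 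This sidesteps precisely the ``principal obstacle'' you flag at the end --- no Bloch splitting, no infinite-dimensional stable block, no design of quadratic corrections. What your approach would buy, if carried out, is closer contact with the spectral picture and presumably greater portability to problems where such a convenient physical-space factorisation is unavailable; what the paper's approach buys is that the hardest step in your outline simply disappears. Your final remark is exactly right: once the argument is in place, the three cases $\alpha=0,1,2$ are handled simultaneously, and the paper indeed proves Theorems \ref{main_theorem}--\ref{main_theoremwhitham} in one go.
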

\begin{theorem}\label{main_theoremwhitham}
There exists a $ C_1 > 0 $ such that the 
following holds.  Let $U \in C([0,T_0],H^{4})$ be a solution of the Whitham system  \eqref{constwhit1}
with 
$$ \sup_{T \in [0,T_0]} \| U(\cdot,T) \|_{H^{4}} \leq C_1 .$$
 Then there exist $ \varepsilon_0 > 0 $ and $ C > 0 $ such that
for all $ \varepsilon \in (0,\varepsilon_0) $ we have solutions  
$ u \in C^0([0,T_0/\varepsilon^3],H^2)$ of our spatially periodic Boussinesq model (\ref{Boussinesq}), such that
\begin{align*}
\sup_{t\in [0,T_0/\varepsilon^3]}\|u(\cdot,t)-U(\varepsilon \cdot,\varepsilon t)\|_{H^2}\leq C_2\varepsilon^{1/2}.
\end{align*}
\end{theorem}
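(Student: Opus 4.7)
The plan is to follow the energy method of Section \ref{sec2} for the $\alpha=0$ (Whitham) case, adapted to the periodic coefficients via the Bloch framework of Section 3. I would set $u = \Psi + \varepsilon^{1/2} R$ with $\Psi$ the improved Whitham approximation constructed in Section 4, for which Lemma \ref{lemkdvres1} provides $\|\partial_x^{-1}\mathrm{Res}(\Psi)\|_{H^1} = \mathcal{O}(\varepsilon^{5/2})$. Inserting this ansatz into \eqref{Boussinesq}, the error $R$ satisfies a Boussinesq-type equation with $x$-dependent coefficients of the schematic form
\[
\partial_t^2 R + \mathcal{L} R = 2\partial_x(c\,\partial_x(\Psi R)) + \varepsilon^{1/2}\partial_x(c\,\partial_x(R^2)) + \varepsilon^{-1/2}\mathrm{Res}(\Psi),
\]
where $\mathcal{L} = -\partial_x(a\partial_x\cdot) + \partial_x^2(b\partial_x^2\cdot)$. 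The residual contribution is thus of order $\varepsilon^{-1/2}\cdot \varepsilon^{5/2} = \varepsilon^{2}$ in $H^1$, while the linear-in-$R$ term picks up no $\varepsilon$-smallness because $\Psi = \mathcal{O}(1)$.

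Imitating Section \ref{sec2}, I would multiply by $-\partial_t\partial_x^{-2} R$ and integrate. In the constant-coefficient case this produced the Hamiltonian-type energy $\int[(\partial_t\partial_x^{-1}R)^2 + R^2 + (\partial_x R)^2]\,dx$ plus the nonlinear correction $2\int c\Psi R^2\,dx$ and the cubic $\tfrac{2}{3}\varepsilon^{1/2}\int cR^3\,dx$; here the $x$-dependence of $a,b,c$ produces additional uncontrolled terms of the form $\int(\partial_t R)(\partial_x\text{-stuff})\,dx$ which do not fold back into pure $\partial_t$-derivatives. The resolution promised in the introduction is to work with a weighted energy
\[
E = \int\!\Bigl[(\partial_t\partial_x^{-1}R)^2 + a(x) R^2 + b(x)(\partial_x R)^2 + 2c(x)\Psi R^2 + \tfrac{2}{3}\varepsilon^{1/2}c(x) R^3\Bigr] dx,
\]
possibly augmented by lower-order Bloch-wave corrections chosen so that the offending modulation terms are absorbed; this is the ``incorporation of normal-form transforms into the energy'' mentioned in the introduction. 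Checking that $\partial_t E$ closes, and showing that $E$ is equivalent to $\|R\|_{H^1}^2 + \|\partial_t\partial_x^{-1}R\|_{L^2}^2$, is the technical heart of the proof. The $H^2$-estimate stated in Theorem \ref{main_theoremwhitham} would then be obtained by running the same energy argument one derivative higher, exploiting that the Bloch decomposition of Section 3 controls the high-mode part $P_s \widetilde{R}$ via the uniform coercivity of $\widetilde{L}_l(\partial_x)$ on the range of $P_s(l)$.

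The main obstacle is precisely the Whitham scaling combined with the $x$-dependent coefficients: since $\Psi = \mathcal{O}(1)$, the term $2\partial_x(c\,\partial_x(\Psi R))$ is of the same differential order as $\mathcal{L}$ with no $\varepsilon$-smallness, and must be absorbed into the positive part of $E$ rather than treated perturbatively. This is exactly where the smallness hypothesis $\|U\|_{H^4}\leq C_1$ is used, in parallel to the constant-coefficient argument in Section \ref{sec2}. Once $E$ is coercive, the factor $\partial_T\Psi = \mathcal{O}(1)$ translated back to $\partial_t$-time gives an $\varepsilon$, and together with the residual bound the energy satisfies
\[
\frac{dE}{dt} \leq C\varepsilon E + C\varepsilon^{1/2} E^{3/2} + C\varepsilon.
\]
A bootstrap argument identical to the one concluding Section \ref{sec2}, choosing $\varepsilon_0$ so that $\varepsilon^{1/2}E(t)^{1/2}$ remains bounded by $1$, yields $E(t) = \mathcal{O}(1)$ on the natural Whitham time scale $[0,T_0/\varepsilon]$. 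Rewriting $u - U(\varepsilon\cdot,\varepsilon t) = (\Psi - U(\varepsilon\cdot,\varepsilon t)) + \varepsilon^{1/2} R$, and observing that the improvement terms $\widetilde{B},\widetilde{B}_2$ added in Section 4 are of higher order in $\varepsilon$ in $H^2$, then delivers the claimed $\mathcal{O}(\varepsilon^{1/2})$ bound.
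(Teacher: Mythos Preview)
Your diagnosis of the difficulty is correct, but the resolution you sketch is not the one the paper uses, and as written it does not close. You propose to multiply by $-\partial_t\partial_x^{-2}R$ as in Section~\ref{sec2} and then repair the leftover commutator terms with ``lower-order Bloch-wave corrections'' to the energy; this remains vague, and the $H^2$ estimate is deferred to a separate Bloch argument on $P_s\widetilde{R}$. Neither step is carried out, and the paper in fact avoids Bloch analysis entirely in the error estimates.

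The paper's actual mechanism is purely in physical space. One absorbs the dangerous $2\partial_x(c\varepsilon^\alpha\Psi\,\partial_x R)$ term into the principal part by introducing the self-adjoint positive operator $B=(a+2c\varepsilon^\alpha\Psi)-\partial_x(b\,\partial_x\,\cdot)$ and its square root $\mathcal{A}=B^{1/2}$, so that the error equation reads $\partial_t^2 R=\partial_x(\mathcal{A}^2\partial_x R)+\mathcal{O}(\varepsilon^{1+\alpha})$. Two multipliers are then used: $\partial_t R$ on the equation itself, and $\mathcal{A}^{-2}\partial_x^{-1}\partial_t R$ on the $\partial_x^{-1}$-integrated equation. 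This produces the energy
\[
E=\tfrac12\bigl(\|\partial_t R\|_{L^2}^2+\|\mathcal{A}^{-1}\partial_x^{-1}\partial_t R\|_{L^2}^2+\|R\|_{L^2}^2+\|\mathcal{A}\partial_x R\|_{L^2}^2\bigr),
\]
which is directly equivalent to the $H^2$ norm of $R$ (not $H^1$), so no separate high-mode argument is needed. The variable-coefficient commutators you worried about become $[\partial_t,\mathcal{A}]$ and $[\partial_t,\mathcal{A}^{-1}]$; these are handled by the identity $(\partial_t\mathcal{A})\mathcal{A}+\mathcal{A}(\partial_t\mathcal{A})=\partial_t(\mathcal{A}^2)=2c\,\partial_t(\varepsilon^\alpha\Psi)=\mathcal{O}(\varepsilon^{1+\alpha})$ together with self-adjointness of $\partial_t\mathcal{A}$. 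The smallness hypothesis on $\|U\|_{H^4}$ enters exactly where you said, namely to make $B$ positive definite when $\alpha=0$. After this the Gronwall step is as you wrote. (Minor point: the paper scales the error as $\varepsilon^{3/2}R$, not $\varepsilon^{1/2}R$.)
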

{\bf Proof of the Theorems \ref{main_theorem}-\ref{main_theoremwhitham}.}
Since we already have the estimates for the residuals in the Lemmas
\ref{lemkdvres}-\ref{lemkdvres1}
from this point on the remaining estimates can be handled exactly the same. 
The case $ \alpha = 0 $ corresponds to the Whitham approximation and the case $ \alpha = 2 $ to the KdV approximation. 

The difference $ \varepsilon^{(3+2 \alpha)/2} R = u - \varepsilon^{\alpha} \Psi $
 satisfies 
\begin{eqnarray} \label{hagen3}
 \partial^2_tR  & = & \partial_x(a\partial_xR)-\partial_x^2(b\partial_x^2R)+2\partial_x(c\partial_x(\varepsilon^{\alpha}\Psi R)) \\ && \qquad +\varepsilon^{(3+2\alpha)/2}\partial_x(c\partial_x(R^2))+\varepsilon^{-(3+2\alpha)/2}\text{Res}(\varepsilon^{\alpha}\Psi).  \nonumber
 \end{eqnarray}
The first three terms on the right hand side can be  written as  
 $$
 \partial_x(a\partial_xR)-\partial_x^2(b\partial_x^2R)+2\partial_x(c\varepsilon^{\alpha}\Psi\partial_xR)+2\partial_x(c(\partial_x\varepsilon^{\alpha}\Psi) R)
 $$
 The last term is of order $ \mathcal{O}(\varepsilon^{1+\alpha}) $ due to the long wave character
 of the approximation $\varepsilon^{\alpha}\Psi$. More essential  the first three terms can be
 written as 
 $\partial_x(B(\partial_xR)) $ 
where $ B $ is the self-adjoint operator
\begin{align*}
 B =(a+2c\varepsilon^{\alpha}\Psi) -\partial_x(b \partial_x).
\end{align*}
In case $ \alpha > 0 $ 
for sufficiently small $\varepsilon> 0 $ 
and in case $ \alpha =  0 $ for sufficiently small $ \| \Psi \|_{C^0_b} $ 
the linear operator $ B $ is positive definite.  Hence there exists   a positive-definite self-adjoint operator $\mathcal{A}$ with $\mathcal{A}^2=B$.
The associated operator  norm $\|\cdot\|_{\mathcal{A}} = \| \mathcal{A} \cdot \|_{L^2} $ 
is then equivalent to the $H^1$-norm and $\mathcal{A}^{-1}$ is a bounded operator from $L^2 $ 
to $ H^1$.
Hence the equation for the error can be written as 
\begin{eqnarray}\label{error}
\partial^2_tR  & = &\partial_x (\mathcal{A}^2(\partial_xR))+2\partial_x(c(\partial_x\varepsilon^{\alpha}\Psi) R)
\\ && \qquad  +\varepsilon^{(3+2\alpha)/2}\partial_x(c\partial_x(R^2))
+\varepsilon^{-(3+2\alpha)/2}\text{Res}(\varepsilon^{\alpha}\Psi). \nonumber
\end{eqnarray}
In order to bound the solutions of \eqref{error} we use energy estimates.
Therefore, we first multiply \eqref{error}
 with $\partial_tR$ and  integrate the obtained expression w.r.t. $x$.
We obtain
$$
\int (\partial_tR) \partial^2_tR  dx   =  \partial_t \int (\partial_tR)^2 dx/2
$$
and 
\begin{eqnarray*}
&& \int (\partial_tR)  \partial_x (\mathcal{A}^2(\partial_xR)) dx \\�& = & -\int  (\partial_t \partial_x R) (\mathcal{A}^2 
(\partial_xR)) dx = -\int  (\mathcal{A} \partial_t \partial_x R) (\mathcal{A}  \partial_xR) dx 
\\& = &  -\int  ( \partial_t (\mathcal{A} \partial_x R)) (\mathcal{A}  \partial_xR) dx - \int  ( [\partial_t, \mathcal{A}] \partial_x R) (\mathcal{A}  \partial_xR) dx
\\& = &  - \partial_t  \int  (\mathcal{A} \partial_x R)^2 dx /2
- \int  ( [\partial_t, \mathcal{A}] \partial_x R) (\mathcal{A}  \partial_xR) dx.
\end{eqnarray*}
 where
 $$ 
 [\partial_t, \mathcal{A}] \cdot  = \partial_t (\mathcal{A} \cdot) - \mathcal{A} \partial_t \cdot
 $$ 
 is the commutator of the operators $ \mathcal{A} $ and $ \partial_t $. Moreover, we estimate 
 \begin{eqnarray*}
|\int (\partial_tR) 2\partial_x(c(\partial_x\varepsilon^{\alpha}\Psi) R)dx |  & \leq &  C \varepsilon^{1+\alpha} \| \partial_tR\|_{L^2} \| R \|_{H^1}, \\
|\int (\partial_tR) \varepsilon^{(3+ 2 \alpha)/2}\partial_x(c\partial_x(R^2))dx |  & \leq & C \varepsilon^{(3+ 2 \alpha)/2} \| \partial_tR\|_{L^2} \| R \|_{H^2}^2, \\
 |\int (\partial_tR) \varepsilon^{-(3+ 2 \alpha)/2}\text{Res}(\varepsilon^{\alpha} \Psi)dx |  & \leq & C \varepsilon^{2+\alpha} \| \partial_tR\|_{L^2}
  \end{eqnarray*}
 where we used the Lemmas \ref{lemkdvres}-\ref{lemkdvres1}.
 Finally we have 
$$ 
[\partial_t, \mathcal{A}] \partial_x R = (\partial_t \mathcal{A}) \partial_x R
$$ 
such that  
\begin{eqnarray*}
\int  ( [\partial_t, \mathcal{A}] \partial_x R) (\mathcal{A}  \partial_xR) dx & = & \int 
((\partial_t \mathcal{A}) \partial_x R) (\mathcal{A}  \partial_xR) dx.
\end{eqnarray*}
In order to control this term we first note that
\begin{align*}
 (\partial_t \mathcal{A}) \mathcal{A} + \mathcal{A}\partial_t\mathcal{A}=  \partial_t (\mathcal{A}^2)= 2c\partial_t(\varepsilon^{\alpha}\Psi)
\end{align*}
and 
$$
((\partial_t \mathcal{A}) u,v)_{L^2} = ( u,(\partial_t \mathcal{A}) v)_{L^2}
$$
which follows from differentiating the associated formula for $ \mathcal{A} $ w.r.t. $ t $ 
such that 
\begin{align*}
|2\int ((\partial_t\mathcal{A})\partial_xR)(\mathcal{A}\partial_xR) dx| =& 
|\int ( \mathcal{A}(\partial_t\mathcal{A})\partial_xR) \partial_xR+\partial_xR (\partial_t\mathcal{A}( \mathcal{A}\partial_xR ))dx|\\
                                                   =& |\int \partial_xR ( \mathcal{A}  \partial_t\mathcal{A}+(\partial_t\mathcal{A} )\mathcal{A})\partial_xRdx|\\
                                                   =& |\int 2c (\partial_t(\varepsilon^{\alpha}\Psi))(\partial_xR)^2dx|\\
                                                   \leq&2\sup_{x\in\mathbb R}|c(x)\partial_t(\varepsilon^{\alpha}\Psi(x,t))|\|\partial_xR\|^2_{L^2} = \mathcal{O}(\varepsilon^{1+\alpha}) \|\partial_xR\|^2_{L^2}.
\end{align*}
In order to get a bound for the $ L^2 $-norm of $ R $ and not only of its derivatives 
we secondly multiply ''$\partial_x^{-1}$\eqref{error}''
 with $\mathcal{A}^{-2} \partial_x^{-1}\partial_tR$ and  integrate the expression obtained  in this way w.r.t. $x$.
   We find
\begin{eqnarray*}
\int (\mathcal{A}^{-2} \partial_x^{-1} \partial_tR) \partial_x^{-1} \partial^2_tR  dx 
& = & \int (\mathcal{A}^{-1} \partial_x^{-1} \partial_tR) \mathcal{A}^{-1}\partial_t\partial_x^{-1} \partial_tR  dx 
\\
&  = & \partial_t \int (\mathcal{A}^{-1} \partial_x^{-1}\partial_tR)^2 dx/2 \\ && 
- \int (\mathcal{A}^{-1} \partial_x^{-1} \partial_tR) [\partial_t,\mathcal{A}^{-1}]\partial_x^{-1} \partial_tR  dx 
,\\
\int (\mathcal{A}^{-2} \partial_x^{-1} \partial_tR) \partial_x^{-1} \partial_x\mathcal{A}^2\partial_xR dx & = &   - \partial_t  \int  R^2 dx /2.
\end{eqnarray*}
Moreover, using $ \mathcal{A}^{-1}:L^2\to H^1 $ and the self-adjointness of $ \mathcal{A}^{-1} $ we estimate 
 \begin{eqnarray*}
|\int (\mathcal{A}^{-2} \partial_x^{-1} \partial_tR) 2\partial_x^{-1} \partial_x(c(\partial_x\varepsilon^{\alpha}\Psi) R)dx |   & = & |\int (\mathcal{A}^{-1} \partial_x^{-1} \partial_tR) 2\mathcal{A}^{-1} (c(\partial_x\varepsilon^{\alpha}\Psi) R)dx |
\\
& \leq &  C \varepsilon^{1+\alpha} \| \partial_x^{-1} \partial_tR\|_{L^2} \| R \|_{L^2}, \\
|\int (\mathcal{A}^{-2} \partial_x^{-1} \partial_tR) \varepsilon^{(3+2 \alpha)/2}\partial_x^{-1} \partial_x(c\partial_x(R^2))dx | 
& = & 
|\int (\mathcal{A}^{-1} \partial_x^{-1} \partial_tR) \varepsilon^{(3+2 \alpha)/2}\mathcal{A}^{-1} (c\partial_x(R^2))dx | 
 \\ & \leq & C \varepsilon^{(3+2 \alpha)/2} \|\partial_x^{-1}  \partial_tR\|_{L^2} \| R \|_{H^1}^2, \\
 |\int (\mathcal{A}^{-2} \partial_x^{-1}\partial_tR) \varepsilon^{-(3+2 \alpha)/2}\partial_x^{-1}\text{Res}(\varepsilon^{\alpha}\Psi)dx |  & \leq & C \varepsilon^{1+\alpha} \|  \partial_x^{-1}\partial_tR\|_{L^2}. 
  \end{eqnarray*}
 where we used again the Lemmas \ref{lemkdvres}-\ref{lemkdvres1}. 
Finally we have 
$$ 
[\partial_t, \mathcal{A}^{-1}] \partial_x R = (\partial_t \mathcal{A}^{-1}) \partial_x R
$$ 
such that  
\begin{eqnarray*}
 \int (\mathcal{A}^{-1} \partial_x^{-1} \partial_tR) [\partial_t,\mathcal{A}^{-1}]\partial_x^{-1} \partial_tR  dx 
 & = & \int 
(\mathcal{A}^{-1} \partial_x^{-1}\partial_t R)  (\partial_t \mathcal{A}^{-1})\partial_x^{-1} \partial_tR dx.
\end{eqnarray*}
 We write this as half of 
\begin{eqnarray*}   
&& \int 
 ((\partial_t \mathcal{A}^{-1})\partial_x^{-1} \partial_tR) (\mathcal{A}^{-1} \partial_x^{-1}\partial_t R) dx
+ 
\int 
(\mathcal{A}^{-1} \partial_x^{-1}\partial_t R)  (\partial_t \mathcal{A}^{-1})\partial_x^{-1} \partial_tR dx
\\�& = & 
\int 
( \partial_x^{-1}\partial_t R) ( (\partial_t \mathcal{A}^{-1})\mathcal{A}^{-1} + \mathcal{A}^{-1} \partial_t \mathcal{A}^{-1})\partial_x^{-1} \partial_tR dx
\\�& = & 
\int 
( \partial_x^{-1}\partial_t R)  (\partial_t (\mathcal{A}^{-2}))\partial_x^{-1} \partial_tR dx =: s_1
\end{eqnarray*}
From
$$ 
 \partial_t(\mathcal{A}^2 \mathcal{A}^{-2}) = ( \partial_t (\mathcal{A}^2))\mathcal{A}^{-2} + \mathcal{A}^2 \partial_t(\mathcal{A}^{-2}) = 0 
$$ 
it follows that 
$$ 
 \partial_t(\mathcal{A}^{-2}) = -\mathcal{A}^{-2}( \partial_t (\mathcal{A}^2))\mathcal{A}^{-2}= -\mathcal{A}^{-2} (\partial_t(\varepsilon^{\alpha} \Psi)) \mathcal{A}^{-2} 
$$ 
such that 
$$ 
s_1 = \int 
(\mathcal{A}^{-2} \partial_x^{-1}\partial_t R) (\partial_t(\varepsilon^{\alpha} \Psi))(\mathcal{A}^{-2} \partial_x^{-1}\partial_t R) dx 
= \mathcal{O}(\varepsilon^{1+\alpha})\| \mathcal{A}^{-2} \partial_x^{-1}\partial_t R \|_{L^2}^2
$$ 
which can be bounded by $  \mathcal{O}(\varepsilon^{1+\alpha})\| \mathcal{A}^{-1} \partial_x^{-1}\partial_t R \|_{L^2}^2 $. 
If we define 
\begin{align*}
 E(t)\,\,=\,\, \frac 1 2 \left(||\partial_tR||_{L^2}^2\,+||\mathcal{A}^{-1}\partial_x^{-1}\partial_tR||_{L^2}^2\,+\,||R||_{L^2}^2\,+\,||\mathcal{A}\partial_xR||_{L^2}^2\right).
\end{align*}
we find 
\begin{eqnarray*}
\frac{d}{dt}{E} & \leq & C_1 \varepsilon^{1+\alpha} E + C_2 \varepsilon^{(3+2\alpha)/2} E^{3/2}  + C_3 \varepsilon^{1+\alpha} E^{1/2} \\
& \leq & C_1 \varepsilon^{1+\alpha} E + C_2 \varepsilon^{(3+2\alpha)/2} E^{3/2}  + C_3 \varepsilon^{1+\alpha} 
+  C_3 \varepsilon^{1+\alpha} 
E,
\end{eqnarray*}
with constants $ C_1 $, $ C_2 $, and $ C_3 $ independent of $ 0 < \varepsilon \ll 1 $
since all the 
 $\|\partial_tR\|_{L^2} $, $ \|\mathcal{A}^{-1}\partial_x^{-1}\partial_tR\|_{L^2} $, etc. appearing above 
 can be estimated by $ E^{1/2} $.
Choosing  $ \varepsilon^{1/2} E^{1/2} \leq 1 $ gives 
$$ 
\frac{d}{dt}{E}(t)  \leq (C_1+ C_2+C_3) \varepsilon^{1+\alpha} E + C_3 \varepsilon^{1+\alpha} 
$$  
 which can be estimated with Gronwall's inequality and yields 
$$ 
E(t) \leq C_3 T_0 e^{(C_1+ C_2+C_3)  T_0} =: M
 $$ 
for all $ 0 \leq   \varepsilon^{1+\alpha} t \leq T_0 $.
Choosing $ \varepsilon_0  > 0 $ so small that  $ \varepsilon_0^{1/2} M^{1/2} \leq 1 $
gives the required estimate first for $ E(t) $.
Since in case $ \alpha > 0 $ 
for sufficiently small $\varepsilon> 0 $ 
and case $ \alpha =  0 $ for sufficiently small $ \| \Psi \|_{C^2_b} $ 
the quantity $ E^{1/2} $ equivalent to the $ H^2 $-norm of $ R $ we are done
with the proof of  the Theorems \ref{main_theorem}-\ref{main_theoremwhitham}.
\qed

\section{Discussion}

It is the purpose of this section to give some heuristic arguments why the previous 
approach works and to put the approach in some larger framework.

The error equation \eqref{smitseq} to the spatially homogeneous Boussinesq equation \eqref{const}
can be written in lowest order in the form of a Hamiltonian system, namely 
\begin{eqnarray*}
\partial_t  \left( \begin{array}{c}  R \\ w \end{array} \right) =  \left( \begin{array}{c} 
w \\
 \partial_x^2 R - \partial_x^4 R + \varepsilon^{\alpha} \Psi \partial_x^2 R + \mathcal{O}(\varepsilon^{1+\alpha})
\end{array}
\right) = \left( \begin{array}{cc} 0 & 1 \\ -1 & 0  \end{array}  \right)\left( \begin{array}{c} \partial_R H \\ \partial_w H \end{array}  \right),
\end{eqnarray*}
with the Hamiltonian 
$$ 
H = \frac{1}{2} \int w^2 + (\partial_x R)^2+ (\partial_x^2 R)^2 + \varepsilon^{\alpha} \Psi (\partial_x R)^2 dx 
$$ 
where for this presentation we used $ \partial_x \Psi  =  \mathcal{O}(\varepsilon) $.
This Hamiltonian is a part of our energy and  it can be used to estimate parts of the $ H^2 $ norm.
Since $ \Psi  $ depends on $ t $ the Hamiltonian is not conserved, but 
we have  
\begin{equation}\label{fjs23}
\frac{d}{dt}  H = \nabla H \cdot \partial_t  \left( \begin{array}{c}  R \\ w \end{array} \right) + \partial_t H 
= 0 + \mathcal{O}(\varepsilon^{1+\alpha})
\end{equation}
since $ \partial_t \Psi  =  \mathcal{O}(\varepsilon) $  due to  the long wave character of the approximation.

In a similar way the spatially periodic case can be understood.
The error equation \eqref{error} to the spatially homogeneous Boussinesq equation \eqref{Boussinesq}
can be written in lowest order in the form of a Hamiltonian system, namely 
\begin{eqnarray*}
\partial_t  \left( \begin{array}{c}  R \\ w \end{array} \right) =  \left( \begin{array}{c} 
w \\
{\partial_x  (\mathcal{A}^2  (\partial_x R)) + \mathcal{O}(\varepsilon^{1+\alpha})}
\end{array}
\right) = \left( \begin{array}{cc} 0 & 1 \\ -1 & 0  \end{array}  \right)\left( \begin{array}{c} \partial_R H \\ \partial_w H \end{array}  \right)+\mathcal{O}(\eps^{1+\alpha}),
\end{eqnarray*}
with the Hamiltonian 
$$ 
H = \frac{1}{2} \int w^2 + (\mathcal{A} \partial_x R)^2 dx $$ 
where for this presentation we used $ \partial_x \Psi  =  \mathcal{O}(\varepsilon) $.
This Hamiltonian is a part of our energy and  it can be used to estimate parts of the $ H^2 $ norm.
Since $ \mathcal{A}$  depends via $ \Psi  $ on $ t $ the Hamiltonian is not conserved, but 
again we have  \eqref{fjs23}
since $ \partial_t \Psi  =  \mathcal{O}(\varepsilon) $  due to  the long wave character of the approximation.

As already said the paper was originally  intended as the next step in generalizing a method 
which has been developed in \cite{CS11} for the justification of the KdV approximation 
in situations when the KdV modes are resonant to other long wave modes respectively in \cite{DSS16}
for the justification of the Whitham  approximation. 
The normal form transforms which were used 
in  the proofs of \cite{CS11,DSS16} leave   the energy surfaces invariant  and can therefore be avoided by 
our 'good' choice of energy.
Hence also the toy problem considered in \cite{CS11,DSS16}  can be handled with the presented approach 
if the nonlinear terms are modified in such a way that a Hamiltonian structure is observed.

\appendix

\section{The inviscid Burgers approximation} \label{appA}

It is the goal of this appendix to provide more details about the 
derivation and the justification via error estimates for 
the inviscid Burgers approximation.
Inserting the ansatz 
\begin{equation*}
\varepsilon^{\alpha} \Psi(x,t) = \varepsilon^{\alpha}
A(\varepsilon(x-t),\varepsilon^{1+\alpha} t)
\end{equation*}
with $ \alpha \in (0,2) $ into  
the homogeneous Boussinesq equation 
\eqref{Boussinesq}
gives the residual 
\begin{eqnarray*}
\textrm{Res}(u)(x,t) & = & 
- \partial_t^2 u(x,t) +
\partial_x^2 u(x,t)
-\partial_x^4 u(x,t)
+\partial_x^2(u(x,t)^2) \\
& = & \varepsilon^{\alpha+ 4} \partial_X^4 A + \varepsilon^{3 \alpha +2 } \partial_T^2 A 
\end{eqnarray*}
and $ A $ to satisfy the inviscid Burgers equation 
$$
\partial_T A = 
-\frac{1}{2}\partial_X(A^2)
$$
if the coefficient of $ \varepsilon^{2 \alpha+ 2} $ is put to zero.
However, the residual is too large for the analysis made in Section \ref{sec2}.
By adding higher order terms to the approximation we obtain the estimates 
stated in Remark \ref{alphaville}, namely 
$$ \| \mathrm{Res}(  \varepsilon^{\alpha}   \Psi(\cdot,t,\varepsilon)   )     \|_{L^2} 
= \mathcal{O}( \varepsilon^{(7+4\alpha)/2}) \quad 
\textrm{and} 
\quad 
 \|  \partial_x^{-1}  \mathrm{Res}( \varepsilon^{\alpha}    \Psi(\cdot,t,\varepsilon)    )    \|_{L^2} = \mathcal{O}( \varepsilon^{(5+4\alpha)/2}) .
$$
We consider the improved approximation 
\begin{equation*}
\varepsilon^{\alpha} \Psi(x,t) = \varepsilon^{\alpha}
A(\varepsilon(x-t),\varepsilon^{1+\alpha} t) + \varepsilon^{\beta}
B(\varepsilon(x-t),\varepsilon^{1+\alpha} t)
\end{equation*}
with $ \beta = \min\{2 \alpha , 2  \} $. For the residual we find 
\begin{eqnarray*}
\mathrm{Res}(\varepsilon^2 \Psi) &  = & - 2 \varepsilon^{2 + \alpha + \beta} \partial_T \partial_X B 
- \varepsilon^{2 + 2 \alpha + \beta} \partial_T^2 B   - \varepsilon^{4 +\beta} \partial_X^{4} B 
+ 2 \varepsilon^{2 + \alpha + \beta } \partial_X^2 (A B) \\ &&  + \varepsilon^{2 + 2 \beta } \partial_X^2 (B^2) 
+ \varepsilon^{\alpha+ 4} \partial_X^4 A + \varepsilon^{3 \alpha +2 } \partial_T^2 A .
\end{eqnarray*}
We choose $ B $ to satisfy 
$$ 
2 \partial_T B = 2 \partial_X (A B)  + g 
$$ 
where 
$$ 
g = \left\{\begin{array}{cl}  \partial_X^{-1}\partial_T^2 A,& \textrm{for } \alpha \in (0,1), \\
 \partial_X^{-1}\partial_T^2 A + \partial_X^3 A ,& \textrm{for } \alpha = 1, \\
\partial_X^3 A ,& \textrm{for } \alpha \in (1,2).
\end{array} \right.
$$
By this choice we have 
$$ 
|\mathrm{Res}(\varepsilon^2 \Psi)| = \mathcal{O}(\max\{\chi_{\alpha \neq 1}(\alpha)\min\{\varepsilon^{\alpha+ 4},\varepsilon^{3 \alpha +2 } \},\varepsilon^{2 + 2 \alpha + \beta},\varepsilon^{4 +\beta} ,  \varepsilon^{2 + 2 \beta }\})  .
$$
Hence only for $ \alpha = 1 $, where $ \beta = 2 $, this is of order 
$ \mathcal{O}( \varepsilon^{4+ 2\alpha}) $
which is the formal order which is necessary to obtain the $ L^2 $ bound.
For all other values of $ \alpha $ more additional terms are necessary.
For $ \alpha \to 0 $ and $ \alpha \to 2 $ the number of such terms goes to infinity
and more and more regularity is necessary. 
We refrain from discussing the solvability of this system of amplitude equations.
This question is non-trivial since already for $ \alpha = 1 $ the term 
$ \partial_X^{-1}\partial_T^2 A $ has to be computed which is possible due to the fact that 
the temporal  derivatives can be expressed as spatial derivatives 
via the inviscid Burgers equation, namely 
$$  
\partial_T^2 A = -\frac{1}{2}\partial_T \partial_X(A^2) =  -\partial_X(A \partial_T  A) 
= \frac{1}{2}\partial_X(A \partial_X(A^2)) = \frac{1}{3}\partial_X^2 (A^3).
$$
Due to this presentation also the estimate for $  \partial_x^{-1}  \mathrm{Res}( \varepsilon^{\alpha}    \Psi ) $ can be obtained since now also 
$  \partial_T^2 B $ can be expressed as spatial derivatives.

\section{Higher regularity results}

It is the purpose of this section to explain how the approximation results can be transferred from 
$ H^2 $ to $ H^m $ with $ m \geq 2 $. Due to the $ x $-dependent coefficients energy estimates for
the spatial derivatives turn out to be rather complicated.
However, by considering  time derivatives the previous ideas  and energies still can be used.
The spatial derivatives then can be estimated via the equation for the error, namely
\begin{eqnarray} \label{doors1}
L R  & = & \partial^2_tR  -2\partial_x(c\partial_x(\varepsilon^{\alpha}\Psi R)) - R \\ && \qquad  -\varepsilon^{(3+2\alpha)/2}\partial_x(c\partial_x(R^2))-\varepsilon^{-(3+2\alpha)/2}\text{Res}(\varepsilon^2\Psi). \nonumber
 \end{eqnarray}
where 
$$ 
L R = \partial_x(a\partial_xR)-\partial_x^2(b\partial_x^2R) - R.
$$ 
The operator $ L $ is invertible and  maps $ H^s $ into $ H^{s+4} $, respectively $ C^m([0,T_0/\varepsilon^{1+\alpha}],H^s) $ into $ C^m([0,T_0/\varepsilon^{1+\alpha}],H^{s+4}) $.
For  $ R \in  C^m([0,T_0/\varepsilon^{1+\alpha}],H^s) $ the right-hand side of \eqref{doors1} is in 
$$
 C^{m-2}([0,T_0/\varepsilon^{1+\alpha}],H^{s}) \cap C^m([0,T_0/\varepsilon^{1+\alpha}],H^{s-2}).
$$
An application of $ L^{-1} $ to \eqref{doors1} shows that 
$$ 
R  \in C^{m-2}([0,T_0/\varepsilon^{1+\alpha}],H^{s+4}) \cap C^m([0,T_0/\varepsilon^{1+\alpha}],H^{s+2}).
$$ 
Iterating this process shows that temporal derivatives can be transformed into spatial derivatives.

It remains to obtain the estimates for the temporal derivatives. In order to do so 
we differentiate 
the equation for the error $ m $ times w.r.t. $ t $. We obtain an equation of the form  
\begin{eqnarray}\label{errort}
\partial^2_t (\partial^m_t R)  & = &\partial_x (\mathcal{A}^2(\partial_x( \partial^m_t R)))+2\partial_x(c(\partial_x\varepsilon^{\alpha}\Psi) ( \partial^m_t R)) + \mathcal{O}(\varepsilon^{1+\alpha})
\end{eqnarray}
due to the fact that  whenever a time derivative falls on $ \mathcal{A} $ or $ \Psi $ another $ \varepsilon $ is gained.
In order to bound the solutions of \eqref{errort} we use energy estimates.
Therefore, we first multiply \eqref{errort}
 with $\partial_t^{m+1}R$ and  integrate the obtained expression w.r.t. $x$. Next as above 
 we multiply ''$\partial_x^{-1} $\eqref{errort}''
 with $\mathcal{A}^{-2} \partial_x^{-1}\partial_tR$ and  integrate the expression obtained  in this way w.r.t. $x$.
 
 If we define 
\begin{align*}
 E_m(t)\,\,=\,\, \frac 1 2 \left(||\partial_t^{m+1}R||_{L^2}^2\,+||\mathcal{A}^{-1}\partial_x^{-1}\partial_t^{m+1}R||_{L^2}^2\,+\,||\partial_t^{m}R||_{L^2}^2\,+\,||\mathcal{A}\partial_x \partial_t^{m}R||_{L^2}^2\right).
\end{align*}
we find 
\begin{eqnarray*}
\frac{d}{dt}{E}_m & \leq & C_1 \varepsilon^{1+\alpha} E_m + C_2 \varepsilon^{(3+2\alpha)/2} \mathcal{E}_m^{3/2} +  C_3 \varepsilon^{1+\alpha} ,
\end{eqnarray*}
with constants $ C_1 $, $ C_2 $, and $ C_3 $ independent of $ 0 < \varepsilon \ll 1 $
and  $\mathcal{E}_m = E + \ldots + E_m $. 
Summing up all estimates for the $ E_j $ for $ j = 0,\ldots, m $ yields a similar inequality for  $ \mathcal{E}_m $
Applying Gronwall's inequality to this inequality gives for instance 
\begin{theorem}\label{main_theoremtt}
Fix $ s \in \mathbb{N} $ and let  $A\in C([0,T_0],H^{6+s}(\mathbb R)) $  
be a solution of the  KdV equation  \eqref{main_KdV}.
Then there exist $\varepsilon_0>0$, $C>0$ such that for all $\varepsilon \in (0,\varepsilon_0)$ we have  
solutions $ u \in C([0,T_0/\varepsilon^3],H^{2+s})$ of the spatially periodic Boussinesq model \eqref{Boussinesq}
with
\begin{align*}
\sup_{t\in [0,T_0/\varepsilon^3]}\|u(\cdot,t)-\varepsilon^2 A(\varepsilon(\cdot-t),\varepsilon^3t)\tilde f_1(0)(\cdot)\|_{H^{2+s}}\leq C\varepsilon^{5/2}.
\end{align*}
\end{theorem}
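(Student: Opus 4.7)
The plan is to bootstrap from the base case already established in Section \ref{sec9} by performing energy estimates on time-differentiated versions of the error equation and then trading temporal derivatives for spatial derivatives via the elliptic operator $L$. First I would fix $m=s+2$ and consider, for $j=0,1,\ldots,m$, the equation obtained by differentiating \eqref{hagen3} $j$ times in $t$. Each such equation has the form
\begin{equation*}
\partial_t^2(\partial_t^j R) = \partial_x\!\left(\mathcal{A}^2 \partial_x(\partial_t^j R)\right) + 2\partial_x\!\left(c(\partial_x \varepsilon^{\alpha}\Psi)\partial_t^j R\right) + \mathcal{O}(\varepsilon^{1+\alpha}),
\end{equation*}
because every time derivative that lands on $\mathcal{A}$, on $\Psi$, or on $\partial_t^{-1}\mathrm{Res}$ produces an extra power of $\varepsilon$ by the long-wave character of the approximation and by Lemma \ref{lemkdvres}. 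The residual terms will require $A\in H^{6+s}$ in order that $\partial_T^{j}\mathrm{Res}$ stay controlled; this is the reason for the shifted regularity assumption in the statement.

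Next I would run the same two-step energy estimate as in the base case on each time-differentiated equation: multiply first by $\partial_t^{j+1} R$ and then by $\mathcal{A}^{-2}\partial_x^{-1}\partial_t^{j+1} R$, integrate in $x$, and combine to define
\begin{equation*}
 E_j(t) = \tfrac{1}{2}\bigl(\|\partial_t^{j+1}R\|_{L^2}^2 + \|\mathcal{A}^{-1}\partial_x^{-1}\partial_t^{j+1}R\|_{L^2}^2 + \|\partial_t^{j}R\|_{L^2}^2 + \|\mathcal{A}\partial_x\partial_t^{j}R\|_{L^2}^2\bigr).
\end{equation*}
The cross terms involving $[\partial_t,\mathcal{A}]$ and $[\partial_t,\mathcal{A}^{-1}]$ are treated exactly as in Section \ref{sec9}, using that $\partial_t\mathcal{A}^2 = 2c\,\partial_t(\varepsilon^{\alpha}\Psi) = \mathcal{O}(\varepsilon^{1+\alpha})$. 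The nonlinear term $\varepsilon^{(3+2\alpha)/2}\partial_x(c\partial_x(R^2))$, after $j$ differentiations, gives a sum of products which can be estimated by $\varepsilon^{(3+2\alpha)/2}\mathcal{E}_m^{3/2}$ where $\mathcal{E}_m=E_0+\cdots+E_m$, provided one uses Moser-type tame estimates in the norms embedded in the $E_j$. Summing the differential inequalities for $j=0,\ldots,m$ yields
\begin{equation*}
 \frac{d}{dt}\mathcal{E}_m \le C_1 \varepsilon^{1+\alpha}\mathcal{E}_m + C_2 \varepsilon^{(3+2\alpha)/2}\mathcal{E}_m^{3/2} + C_3\varepsilon^{1+\alpha},
\end{equation*}
from which Gronwall's lemma gives $\mathcal{E}_m(t) = \mathcal{O}(1)$ on the time interval $[0,T_0/\varepsilon^{1+\alpha}]$ after imposing the smallness condition $\varepsilon^{1/2}\mathcal{E}_m^{1/2}\le 1$ as before.

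Finally I would convert temporal regularity into spatial regularity through the bootstrap indicated around \eqref{doors1}. Since $L=\partial_x(a\partial_x\cdot)-\partial_x^2(b\partial_x^2\cdot)-\mathrm{Id}$ is an elliptic isomorphism $H^{\sigma}\to H^{\sigma+4}$ with $L^{-1}$ bounded, and the right-hand side of \eqref{doors1} gains two spatial derivatives whenever two temporal derivatives are available, a finite iteration of $L^{-1}$ converts the control $R\in C^m\!\cap C^{m-2}(H^{\ast})$ obtained from $\mathcal{E}_m$ into $R\in C^0([0,T_0/\varepsilon^3],H^{2+s})$. Putting the bounds together gives the $H^{2+s}$ estimate of size $\mathcal{O}(\varepsilon^{5/2})$ claimed in Theorem \ref{main_theoremtt}. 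The main obstacle will be the bookkeeping of the commutators arising after $j$ time differentiations of $\mathcal{A}$ together with the Moser-type tame estimates of the time-differentiated nonlinearity, ensuring that no derivative count exceeds what is permitted by the assumed regularity $A\in H^{6+s}$; beyond this combinatorial care, the argument is a direct iteration of the structure already developed in Section \ref{sec9}.
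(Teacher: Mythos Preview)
Your proposal is correct and follows essentially the same route as the paper's own argument in Appendix~B: differentiate the error equation in $t$, run the identical two-step energy estimate on each $\partial_t^j R$ to obtain the inequality for $\mathcal{E}_m = E_0+\cdots+E_m$, close it via Gronwall, and then trade temporal for spatial regularity through the elliptic bootstrap based on \eqref{doors1}. The only minor point is that the precise relation between $m$ and $s$ needed for the bootstrap is not quite $m=s+2$ but is determined by the gain-of-four/loss-of-two bookkeeping sketched above \eqref{doors1}; otherwise your outline matches the paper's proof.
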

Theorem \ref{main_theoremburgers} and Theorem \ref{main_theoremwhitham} can be reformulated in a similar way.

\section{Bloch transform on the real line}

\label{leutkirch}

In this section we recall basic properties of Bloch transform. 
Our presentation follows \cite{GSU}.
Bloch transform $ \mathcal{T} $ generalizes  Fourier transform $ \mathcal{F} $ from spatially
homogeneous problems to  spatially periodic problems.
Bloch transform is (formally) defined by
\begin{equation} \label{bt1}
\widetilde{u}(\ell,x)=(\mathcal{T} u)(\ell,x)=\sum_{j\in\Z}e^{i j x}\widehat{u}(\ell+j),
\end{equation}
where $\widehat{u}(\xi) = \left( \mathcal{F} u \right)(\xi)$, $\xi \in \mathbb{R}$ is the Fourier transform
of $u$. The inverse of Bloch transform is given by
\begin{equation}
u(x)=(\mathcal{T}^{-1}\widetilde{u})(x)=\int_{-1/2}^{1/2}e^{i \ell x}\widetilde{u}(\ell,x)
d \ell.
\end{equation}
By construction, $\widetilde{u}(\ell,x)$ is extended from $(\ell,x) \in \mathbb{T}_1 \times \mathbb{T}_{2\pi}$
to $(\ell,x) \in \mathbb{R} \times \mathbb{R}$ according to the continuation conditions:
\begin{gather}
\widetilde{u}(\ell,x) = \widetilde{u}(\ell,x + 2 \pi)   \quad
\mbox{and} \quad
\widetilde{u}(\ell,x) = \widetilde{u}(\ell + 1,x) e^{i x}.
\label{bt2si}
\end{gather}
The following lemma specifies the well-known property of Bloch transform acting on Sobolev function spaces.
\begin{lemma} \label{iso}
Bloch transform $\mathcal{T}$ is an isomorphism between
$$H^s(\R) \qquad  \textrm{and}  \qquad L^2(\mathbb{T}_1,H^s(\mathbb{T}_{2 \pi})),
$$
where $L^2(\mathbb{T}_1,H^s(\mathbb{T}_{2 \pi}))$ is equipped with the norm
$$
\| \widetilde{u} \|_{ L^2(\mathbb{T}_1,H^s(\mathbb{T}_{2 \pi}))}=
\left(\int_{-1/2}^{1/2}
 \|\widetilde{u}(\ell,\cdot )\|_{H^s(\mathbb{T}_{2 \pi})}^2 d\ell\right)^{1/2}.
 $$
\end{lemma}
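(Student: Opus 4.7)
The plan is to reduce everything to the Fourier transform side via the definition \eqref{bt1} and to exploit the fact that, for fixed $\ell$, the function $x \mapsto \widetilde{u}(\ell,x)$ is $2\pi$-periodic and its Fourier coefficients (in the variable $x$) are precisely the values $\widehat{u}(\ell+j)$ for $j \in \Z$. Concretely, I will use the Fourier characterization $\|u\|_{H^s(\R)}^2 = \int_\R (1+\xi^2)^s |\widehat{u}(\xi)|^2\,d\xi$ and the equivalent norm on $H^s(\mathbb{T}_{2\pi})$ given by $\|v\|_{H^s(\mathbb{T}_{2\pi})}^2 \sim \sum_{j \in \Z} (1+j^2)^s |\widehat{v}_j|^2$, where $\widehat{v}_j$ denotes the $j$-th Fourier coefficient.

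First I would verify the bijective part. The formal inverse displayed right after \eqref{bt1}, namely
\all{
u(x) = \int_{-1/2}^{1/2} e^{i \ell x} \widetilde{u}(\ell,x)\,d\ell = \int_{-1/2}^{1/2} \sum_{j \in \Z} e^{i(\ell+j)x} \widehat{u}(\ell+j)\,d\ell = \int_\R e^{i \xi x} \widehat{u}(\xi)\,d\xi,
}
recovers the usual inverse Fourier transform after the change of variable $\xi = \ell + j$ and recombination of the unit intervals $[j-1/2, j+1/2]$ into $\R$; so $\mathcal{T}$ is bijective on the Schwartz level and it suffices to prove the two-sided norm bound.

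Next, for fixed $\ell \in [-1/2, 1/2]$, the definition \eqref{bt1} identifies $\widehat{u}(\ell+j)$ as the $j$-th Fourier coefficient of the $2\pi$-periodic function $\widetilde{u}(\ell,\cdot)$. Therefore
\all{
\|\widetilde{u}(\ell,\cdot)\|_{H^s(\mathbb{T}_{2\pi})}^2 \sim \sum_{j \in \Z} (1+j^2)^s |\widehat{u}(\ell+j)|^2,
}
with constants independent of $\ell$. Integrating in $\ell$, applying Fubini (everything is non-negative), and substituting $\xi = \ell + j$ yields
\all{
\|\widetilde{u}\|_{L^2(\mathbb{T}_1,H^s(\mathbb{T}_{2\pi}))}^2 \sim \sum_{j \in \Z} \int_{j-1/2}^{j+1/2} (1+j^2)^s |\widehat{u}(\xi)|^2\,d\xi.
}

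The key elementary observation is then that for $\xi \in [j-1/2, j+1/2]$ the weights $(1+j^2)^s$ and $(1+\xi^2)^s$ differ only by a multiplicative constant depending on $s$ but independent of $j$, because $|\xi - j| \leq 1/2$ forces $\tfrac{1}{2}(1+j^2) \leq 1+\xi^2 \leq 2(1+j^2)$ for $|j| \geq 1$ (and the case $j = 0$ is trivially bounded). Summing over $j$ converts the right-hand side into $\int_\R (1+\xi^2)^s |\widehat{u}(\xi)|^2\,d\xi = \|u\|_{H^s(\R)}^2$, up to constants that depend only on $s$. This gives the two-sided bound, and combined with the bijectivity above shows that $\mathcal{T}$ is an isomorphism; density of Schwartz functions extends the statement to all of $H^s(\R)$.

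The only genuinely delicate step is the weight comparison on the unit intervals; beyond that, the argument is a bookkeeping exercise combining Plancherel on $\mathbb{T}_1$ with Parseval on $\mathbb{T}_{2\pi}$. No further analytic input (normal form transforms, energy estimates, etc.) is needed here, since this is purely an assertion about the Bloch transform itself.
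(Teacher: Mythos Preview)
The paper does not actually prove this lemma; it is stated in Appendix~\ref{leutkirch} as a well-known property of the Bloch transform, with the presentation attributed to \cite{GSU}, and no argument is supplied. Your proof is correct and is the standard one for this fact: read off the Fourier coefficients of $\widetilde{u}(\ell,\cdot)$ from the definition \eqref{bt1}, use the Fourier-series characterization of $H^s(\mathbb{T}_{2\pi})$ and the Fourier-integral characterization of $H^s(\R)$, and compare the weights $(1+j^2)^s$ and $(1+\xi^2)^s$ on each interval $[j-1/2,j+1/2]$. There is nothing to compare against in the paper itself.
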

Multiplication of two functions $u(x)$ and $v(x)$ in $x$-space corresponds
some convolution  in Bloch space:
\begin{gather}
(\tilde{u} \star \tilde{v}) (\ell, x) =
\int\limits_{- 1/2}^{1/2} \tilde{u} (\ell - m,x)
\tilde{v}(m,x) d m,
\label{bconv1}
\end{gather}
where the continuation conditions \eqref{bt2si} have to be used for $|\ell-m|>1/2$. If
$\chi:\R\rightarrow\R$ is $2\pi$ periodic, then
\begin{equation} \label{tweedy}
\mathcal{T}(\chi u)(\ell,x)=\chi(x)(\mathcal{T} u)(\ell,x).
\end{equation}
The relations (\ref{bconv1}) and (\ref{tweedy}) are well-known  and
can be proved from the definition (\ref{bt1}).

\end{document}